\newcommand{\n}{\noindent}
\newcommand{\vp}{\varepsilon}
\newcommand{\bb}[1]{\mathbb{#1}}
\newcommand{\cl}[1]{\mathcal{#1}}
\newcommand{\ovl}{\overline}
\theoremstyle{plain}
\newtheorem{thm}{Theorem}[section]
\newtheorem{lem}[thm]{Lemma}
\newtheorem{pro}[thm]{Proposition}
\newtheorem{cor}[thm]{Corollary}
\theoremstyle{definition}
\newtheorem{dfn}[thm]{Definition}
\theoremstyle{remark}
\newtheorem{rem}[thm]{Remark}
\numberwithin{equation}{section}
\def\tilde{\widetilde}
\renewcommand{\tilde}{\widetilde}
 \def\R{\bb  R}
\def\RR{\bb  R}
\def\CC{\bb  C}
\def\KK{\bb  K}
\def\E{\bb  E}
\def\F{\bb  F}
\def\P{\bb  P}
\def\T{\bb  T}
\def\d{\delta}
\def\NN{\bb  N}
\def\N{\bb  N}
\def\RR{\bb  R}
\def\Q{\bb  Q}
\def\PP{\bb  P}
\def\CC{\bb  C}
\def\KK{\bb  K}
\def\RR{\bb  R}
\def\CC{\bb  C}
\def\KK{\bb  K}
\def\E{\bb  E}
\def\F{\bb  F}
\def\P{\bb  P}
\def\T{\bb  T}
\def\d{\delta}
\def\NN{\bb  N}
\def\ZZ{\bb  Z}
\def\Z{\bb  Z}
\def\RR{\bb  R}
\def\PP{\bb  P}
\def\CC{\bb  C}
\def\KK{\bb  K}
\def\FF{\bb  F}
\def\ov{\overline}
\def\phi{\varphi}
\def\ie{{\it  i.e.\  }}
\def\eg{{\it  e.g.}}
\def\n{\noindent}
\def\nl{\nolimits}
\def\tr{\rm  tr}
\def\a{\alpha}
\def\t{\theta}
\def\C{\mathscr{C}}
\def\B{\mathscr{B}}
\def\I{\cl  I}
\def\e{\cl  E}
\begin{document}

\title{Seemingly injective von Neumann algebras
    }

\author{by\\
Gilles  Pisier  \\
Texas  A\&M  University\\
College  Station,  TX  77843,  U.  S.  A.}

\def\C{\mathscr{C}}
\def\B{\mathscr{B}}
\def\I{\cl  I}
\def\e{\cl  E}
\def\G{\bb G}

\def\R{\bb R}
\def\RR{\bb R}
 
\def\CC{\bb C}
\def\KK{\bb K}
\def\E{\bb E}
\def\F{\bb F}
\def\P{\bb P}
\def\T{\bb T}

\def\d{\delta}
\def\NN{\bb N}
\def\N{\bb N}
\def\RR{\bb R}
\def\Q{\bb Q}
\def\PP{\bb P}
\def\CC{\bb C}
\def\KK{\bb K}
\def\RR{\bb R}
\def\CC{\bb C}
\def\KK{\bb K}
\def\E{\bb E}
\def\F{\bb F}
\def\P{\bb P}
\def\T{\bb T}

\def\d{\delta}
\def\NN{\bb N}
\def\ZZ{\bb Z}
\def\Z{\bb Z}
\def\RR{\bb R}
\def\PP{\bb P}
\def\CC{\bb C}
\def\KK{\bb K}
\def\FF{\bb F}

\def\v{\varphi}
\def\ov{\overline}
\def\phi{\varphi}
\def\ie{{\it i.e.\ }}
\def\eg{{\it e.g.}}
\def\n{\noindent}
\def\nl{\nolimits}
\def\tr{{\rm tr}}

 \pagenumbering{roman}
 \maketitle
 
\begin{abstract}    
 We  show that a QWEP von Neumann algebra
 has the weak* positive   approximation property
 if and only if it is seemingly injective in the following sense:
 there is a factorization of the identity of $M$
 $$Id_M=vu: M{\buildrel u\over\longrightarrow}   B(H) {\buildrel v\over\longrightarrow} M$$ 
 with $u$ normal, unital, positive and $v$   completely contractive.
 As a corollary, if $M$ has a separable predual, $M$ is isomorphic
 (as a Banach space) to $B(\ell_2)$.
 For instance this applies (rather surprisingly)
 to the von Neumann algebra of any free group.
 Nevertheless, since $B(H)$ fails
 the approximation property (due to Szankowski)  there are $M$'s (namely  $B(H)^{**}$ and certain finite examples defined using ultraproducts) that are not  seemingly injective. Moreover, for
 $M$  to be seemingly injective  it suffices to have the above factorization
 of $Id_M$ through $B(H)$ with $u,v$   positive (and $u$ still normal).
   \end{abstract}

 \thispagestyle{empty}
 
{\bf MSC 2010 Classification:} 46L10, 46L07, 46B28, 47L07
 
{\bf Keywords:} von Neumann algebra, injectivity, positive approximation property
\vfill\eject


\setcounter{page}{1}

 \pagenumbering{arabic}
 

 A von Neumann algebra $M$ is called injective
 if there exists a Hilbert space $H$ and an isometric normal $*$-homomorphism $u: M \to B(H)$
 such that there is a projection $P: B(H) \to u(M)$ onto $u(M)$ with $\|P\|=1$, which 
 by Tomiyama's theorem (see \cite{Tak}) is automatically 
 a completely positive and completely contractive conditional expectation.
 Injective von Neumann algebras play a central role in operator algebra theory (see e.g. \cite{Tak,Tak2}).
 Following the work of Connes \cite{Co}, Choi and Effros \cite{[CE2],[CE4]} proved that
  a $C^*$-algebra $A$ is nuclear if and only if its bidual $A^{**}$
 is injective.
  Injectivity is in some sense analogous to  amenability for groups.\\
  Equivalently $M$ is injective if and only if the identity $Id_M$ admits, for some $H$, a factorization of the form
    \begin{equation}\label{ee1}
      Id_M: M {\buildrel u\over\longrightarrow} B(H) {\buildrel v\over\longrightarrow} M\end{equation} 
 where
 $u: M\to B(H)$ and $v: B(H) \to M$ are completely contractive maps.
 In fact, it suffices for injectivity to require $u,v$ completely bounded  (see Remark \ref{26/10}).
 The fundamental injective example   is of course $B(H)$ itself.
  The main non-injective example   is the von Neumann algebra $L(\F)$ of a free group $\F$
  with at least two generators.  Actually, by known results (see \cite{[HP2]})  
  for any embedding $L(\F) \subset B(H)$ as a von Neumann subalgebra there is no bounded
  linear projection onto $L(\F)$. Moreover when $M=L(\F)$ there is no factorization
  as above with $u,v$ both completely bounded (see Remark \ref{26/10}).
  The initial motivation of this note is the observation that when $M=L(\F)$, the identity $Id_M$ 
  still admits a factorization as above but with $u$ merely a normal unital positive (linearly) isometric embedding and $v$ still completely contractive. When this holds we say that $M$ is   ``seemingly injective".
  Actually,  for this to hold it suffices to have such a factorization with  $u$ and $v$ both   positive, and $u$  normal.\\
  In Theorem \ref{ot1}, we show that this property characterizes the $M$'s that have a certain form of  weak* positive   approximation property (in short PAP). The latter was proved for $M=L(\F)$ in \cite{CaHa} following Haagerup's well known work \cite{Hainv}. Our result can be viewed as
  analogous to the equivalence between injectivity and
  semidiscreteness (see \cite[p. 173]{Tak2}).
In \cite[p. 12]{EL}, Effros and Lance  called ``semidiscrete" the algebras $M$ that have the 
  weak* completely positive   approximation property (in short weak* CPAP).
  Their paper contains many  important fore-runners
 of Connes's later results on the equivalence of injectivity and semidiscreteness.
   Connes  \cite{Co} proved the latter equivalence  for factors
  and   Choi and Effros \cite{[CE4]} extended it to arbitrary von Neumann algebras.
   We develop further the parallelism between 
   seemingly injective and injective in \S \ref{inj}.

 \section{Main result}
 
 Throughout this paper,  we abbreviate ``approximation property" by AP.\\
 We start by  introducing the  weakening of injectivity,   
 which is the center of interest of this paper:
 \begin{dfn}\label{d1}
We will say that a  von Neumann algebra (or a dual operator space)
 $M$ is ``seemingly injective" 
 if   $Id_M$ admits, for some $H$, a factorization of the form
      \begin{equation}\label{e1}
      Id_M: M {\buildrel u\over\longrightarrow} B(H) {\buildrel v\over\longrightarrow} M \end{equation}
    with  $u$ normal, unital and positive (and hence $\|u\|=1$), and  $\|v\|_{cb}=1$.
    Note that $u$   unital  and $vu=Id$ implies that $v$ is also unital, and hence completely positive. \\
   Dropping  the unital positivity assumption on $u$, if  this factorization holds  with $u,v$  such that $\|u\|=1$ and $\|v\|_{cb}=1$
    we will say   that $M$ is remotely   injective.
    \end{dfn}
    \begin{rem}\label{or0} In the preceding situation, $u$ is isometric and normal, so by  Krein-Smulian
    the subspace $u(M)\subset B(H)$ is weak* closed, and by Sakai's predual uniqueness theorem 
    (see \cite[p. 30]{Sak}),
    $u^{-1}: u(M) \to M$ is weak* continuous.
      \end{rem}
    \begin{rem}\label{or1} If    $M$ with separable predual is  seemingly injective (or remotely injective),
    we may take $H=\ell_2$. Indeed, by the preceding remark
    $u(M)$ has a separable predual, namely $B(H)_*/Z$ where $Z$ is the preanihilator of $u(M)$.
    It follows that $u(M)$ is normed by a countable subset of the unit ball of $B(H)_*$.
    Therefore there is a separable Hilbert subspace $K\subset H$ such that the compression
    $\Psi: x\mapsto P_K x_{|K }$ is an isometric  normal unital embedding of $u(M)$ in $B(K)$.
    Repeating this argument for $M_n(u(M))$ for all $n\ge 1$ (and augmenting $K$) we can obtain a separable $K\simeq \ell_2$
    such that the preceding embedding $\Psi: u(M)\to B(K)$ is also  completely isometric.
    Then by the injectivity of $B(H)$ the embedding $u(M)\subset B(H)$
    factors as $u(M) {\buildrel  {\Psi}\over \longrightarrow } B(K) {\buildrel  {w}\over \longrightarrow } B(H)$
    with   $w$ completely contractive. Replacing $v$ in \eqref{e1} by $vw$
    this shows that we may assume that $H=\ell_2$.
     \end{rem}
       \begin{rem}\label{26/10} If $u,v$ are both assumed completely bounded
       in \eqref{ee1}  then
       $M$ is injective (and here we do not need to assume $u$ normal).
       This is due to Haagerup (see \cite[Th. 23.7]{P6} together with \cite[Cor. 22.19]{P6}), refining previous results  
     by Christensen-Sinclair and the author
      independently showing that the existence of a completely complemented embedding
      $M \subset B(H)$ (as a von Neumann subalgebra) implies injectivity.
      See   \cite[\S 23.7]{P6} for more detailed references.
        We return to  this topic  in Remark \ref{rc}.
       \end{rem}
    \begin{rem} We could also consider a variant with a constant $c$,  and say 
    seemingly $c$-injective if  $\|v\|_{cb}
\le c$. 
There is a natural notion of seemingly nuclear which we do not spell out
involving the analogue of the completely positive AP. All this seems to deserve further study.
\end{rem}
 \begin{rem}\label{rf1} Let $p$ be a projection in $M$. We may view $pMp$ as usual
 as a von Neumann algebra  with unit $p$. It is easy to check that
 if $M$ is seemingly injective, so is $pMp \oplus \CC (1-p)$.
 Moreover, if $M$ is seemingly injective, so is any von Neumann 
 (unital) subalgebra $N\subset M$ for which there is a 
unital  positive (automatically c.p.) projection $P: M\to N$ onto $N$. By the last assertion in Theorem \ref{ot1}
 ``unital positive" can be replaced by ``positive" in the preceding assertion.
 In particular $pMp$ is seemingly injective.  
\end{rem}

 \begin{rem} One major dissemblance with injectivity is that
 it is unclear (and a priori unlikely to be true) whether the tensor product
 of two seemingly injective algebras $N,M$ is seemingly injective,
 even if $N=B(K)$ with $K$ Hilbert. This is of course due to the fact
 that, unlike for c.p. maps, we cannot tensorize unital positive maps.
 Analogously, we do not know whether the commutant $M'$ 
 is seemingly injective  if $M$ is so.
\end{rem}

 \begin{rem}\label{mixed} Following \cite[p. 275]{OR2},  given a constant $c\ge 1$,
 an operator space $Z\subset B(H)$ is $c$-mixed injective  if $Z$ is the range of   a projection
 $P: B(H) \to Z$ with $\|P\|\le c$. With this terminology,  
 any remotely injective von Neumann algebra $M$ is  isometric
 to a weak* closed subspace $Z\subset B(H)$ that is $1$-mixed injective.
 Indeed, if \eqref{e1} holds we can take $Z=u(M)$ and $P=uv$.
 \end{rem}
We will use the weak expectation property (in short WEP)
originally introduced by Lance.
Recall that a $C^*$-algebra $A$ is called WEP
(resp. QWEP)
if the inclusion $i_A: A \to A^{**}$ into its bidual
factors via completely positive 
(in short c.p.) and completely contractive (in short c.c.)
maps through some $B(H)$
 (resp. if it is a quotient
of a WEP $C^*$-algebra). 

\begin{rem}\label{kiki} Kirchberg proved that a von Neumann algebra $M$ is
QWEP if and only if  there is (for some $H$) a u.c.p. map $v: B(H) \to M$
that is a metric surjection
(i.e. $v$ maps the open unit ball of $B(H)$ onto 
the open unit ball of $M$). In particular, this shows  the following.\end{rem}

\begin{pro}\label{kiki1} A von Neumann algebra $M$ is seemingly injective
if and only if  there is (for some $H$) a u.c.p. metric surjection $v: B(H) \to M$
that admits a normal unital positive lifting $u: M \to B(H)$.
\end{pro}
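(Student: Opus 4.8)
The plan is to prove Proposition~\ref{kiki1} by showing the two implications separately, using Kirchberg's characterization recorded in Remark~\ref{kiki} as the bridge for the QWEP side.

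\medskip

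For the forward direction, suppose $M$ is seemingly injective, so by Definition~\ref{d1} we have $Id_M=vu$ with $u:M\to B(H)$ normal, unital, positive, and $\|v\|_{cb}=1$. As noted in the definition, since $u$ is unital and $vu=Id_M$, the map $v$ is automatically unital, and a unital contractive map that is completely contractive is completely positive; hence $v$ is u.c.p. It remains only to check that such a $v$ can be taken to be a metric surjection. Since $vu=Id_M$, the map $v$ is already surjective, and I would verify it is a metric surjection by observing that for any $x$ in the open unit ball of $M$, the element $u(x)$ lies in the open unit ball of $B(H)$ (as $\|u\|=1$) and satisfies $v(u(x))=x$; so $v$ maps the open unit ball of $B(H)$ onto the open unit ball of $M$. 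Then $u$ is the desired normal unital positive lifting of $v$, and the forward implication is complete.

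\medskip

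For the converse, suppose there is a u.c.p. metric surjection $v:B(H)\to M$ admitting a normal unital positive lifting $u:M\to B(H)$. The phrase ``lifting'' should be read as $vu=Id_M$, which is exactly the factorization in \eqref{e1}; here $v$ is u.c.p. so in particular $\|v\|_{cb}=1$, and $u$ is normal, unital, and positive by hypothesis, so $\|u\|=1$ follows. This is precisely the factorization required by Definition~\ref{d1}, so $M$ is seemingly injective. This direction is essentially a matter of matching the hypotheses to the definition.

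\medskip

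The main subtlety I anticipate is the role of Kirchberg's theorem (Remark~\ref{kiki}): the proposition is phrased so that the existence of a u.c.p. \emph{metric surjection} $v$ is the relevant ingredient, which by Kirchberg's result is equivalent to $M$ being QWEP. The genuinely new content, beyond Definition~\ref{d1}, is therefore the assertion that the $v$ in the factorization \eqref{e1} can always be upgraded to a metric surjection, and conversely that seeming injectivity is detected by a metric-surjective $v$ together with a normal unital positive lift $u$. I expect the crux to lie in confirming the metric-surjection property cleanly from $vu=Id_M$ together with $\|u\|=1$ and $\|v\|\le 1$, rather than invoking the full strength of Kirchberg's construction; once that observation is in place, both directions reduce to unwinding Definition~\ref{d1}, and the equivalence follows.
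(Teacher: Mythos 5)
Your proof is correct and matches what the paper intends: the paper states Proposition \ref{kiki1} as an immediate consequence of Definition \ref{d1} (leaving the argument implicit), and the implicit argument is exactly yours — the section $u$, being unital positive hence contractive, shows the $v$ of the seemingly injective factorization maps the open unit ball onto the open unit ball, while the converse is just a rereading of the definition. You also correctly identify that Kirchberg's theorem (Remark \ref{kiki}) serves only as context relating this to QWEP, not as an ingredient of the proof.
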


 Our main result is as follows (see \S \ref{bg} for the definitions
 on the various forms of AP):
 
\begin{thm}\label{ot1}
 The following properties of a von Neumann algebra  $M$ are equivalent:
 \item{\rm (i)} $M$  is seemingly injective,
   \item{\rm (ii)} There is a net of integers $n(\a)$ and normal  finite rank
   maps $T_\a: M\to M$ of the form
   $$    M {\buildrel u_\a\over\longrightarrow} M_{n(\a)} {\buildrel v_\a\over\longrightarrow} M$$
   such that $u_\a,v_\a$  are both unital and positive
    (so that $\|u_\a\|\le 1$,  $\|v_\a\|\le 1$), $u_\a$ is normal
   and $T_\a(x)=v_\a u_\a(x) \to x$ weak* for any $x\in M$ (we then say that
$M$ has the matricial weak* PAP).
     \item{\rm (iii)} $M$
 is QWEP and
 there is a net $(T_\a)$ of normal  finite rank unital positive maps on $M$
such that $T_\a(x)  \to x$ weak* for any $x\in M$ (in which case we say that
$M$ has the weak* PAP).\\
Moreover, for $M$ to be seemingly injective it suffices that
 there is a factorization
of the form \eqref{e1}
with $u$ normal and $u,v$ both   positive. 
\end{thm}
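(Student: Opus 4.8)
The plan is to prove the cycle (i) $\Rightarrow$ (iii) $\Rightarrow$ (ii) $\Rightarrow$ (i) and to absorb the final ``Moreover'' assertion into the step (ii) $\Rightarrow$ (i). Two external inputs drive everything: the semidiscreteness of $B(H)$ (being injective it carries a net of normal finite-rank u.c.p. maps $\phi_\alpha=\beta_\alpha\alpha_\alpha\colon B(H)\to M_{n(\alpha)}\to B(H)$, obtained from compressions to finite-rank projections, with $\phi_\alpha\to Id$ point-weak$^*$), and Kirchberg's QWEP characterization repackaged in Proposition \ref{kiki1}.

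For (i) $\Rightarrow$ (iii), the QWEP half is immediate: a factorization \eqref{e1} yields, via Proposition \ref{kiki1} and Remark \ref{kiki}, a u.c.p. metric surjection onto $M$. For the weak$^*$ PAP I would set $T_\alpha=v\phi_\alpha u$. Each $T_\alpha$ is unital, positive, finite-rank, and \emph{normal} — the last point is automatic because $\phi_\alpha$ has finite rank and $u$ is normal, so $T_\alpha$ is a finite sum of (normal functional)$\,\cdot\,$(fixed element of $M$). The one delicate point is that $T_\alpha(x)=v(\phi_\alpha u(x))\to vu(x)=x$ weak$^*$ requires $v$ to be weak$^*$-continuous, which a general completely contractive $v$ need not be; I would repair this by first replacing $v$ with its normal part $v_n$ (again c.p.) and checking, using that $v|_{u(M)}=u^{-1}$ is normal by Remark \ref{or0}, that still $v_nu=Id_M$, after which $v_n$ is unital and the convergence goes through. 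For (iii) $\Rightarrow$ (ii) QWEP does the real work: given the weak$^*$ PAP maps $T_\alpha$, I must re-route each through a \emph{matrix} block while keeping both legs unital and positive. Using the u.c.p. metric surjection $q\colon B(K)\to M$ from QWEP together with the semidiscrete approximation of $B(K)$, one writes each normal finite-rank positive $T_\alpha$ as $M\xrightarrow{u_\alpha}M_{n}\xrightarrow{v_\alpha}M$ with $u_\alpha$ normal unital positive and $v_\alpha$ unital positive; the metric surjectivity of $q$ is what allows positivity to be preserved when the intermediate algebra is cut down to a finite block.

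For (ii) $\Rightarrow$ (i), which carries the Moreover, form the injective von Neumann algebra $\mathcal B=\ell^\infty\text{-}\bigoplus_\alpha M_{n(\alpha)}\subset B(H)$. Then $U=(u_\alpha)_\alpha\colon M\to\mathcal B$ is normal, unital and positive (normality because each $\omega\in\mathcal B_*$ is an $\ell^1$-sum $(\omega_\alpha)$ and $\sum_\alpha\omega_\alpha u_\alpha$ converges in $M_*$), while along an ultrafilter $\mathcal U$ refining the net, $V\big((y_\alpha)\big)=\text{w}^*\text{-}\lim_{\mathcal U}v_\alpha(y_\alpha)$ is unital, positive, contractive and satisfies $VU=Id_M$. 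Composing $U$ with $\mathcal B\hookrightarrow B(H)$ and $V$ with a completely contractive conditional expectation $B(H)\to\mathcal B$ (which exists since $\mathcal B$ is injective) produces exactly a factorization \eqref{e1} whose return map is merely \emph{positive} — precisely the hypothesis of the Moreover. It then remains to upgrade a positive factorization $Id_M=vu$ ($u$ normal positive, $v$ positive) to a seemingly injective one. First I would normalize $u$ to be unital (conjugating by $u(1)^{-1/2}$ when $u(1)$ is invertible, and otherwise passing to a suitable corner or direct summand). The crux is then to replace the merely positive $v$ by a completely contractive one without destroying $vu=Id_M$; here I would invoke QWEP through Proposition \ref{kiki1}, for which it suffices to produce a u.c.p. metric surjection $B(H)\to M$ (automatically completely contractive) admitting a normal unital positive lifting. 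QWEP furnishes the surjection for free, and the positive/PAP data is spent building a compatible normal unital positive lifting.

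I expect this last upgrade to be the main obstacle: converting ``merely positive'' data into ``completely contractive'' data is exactly the step where positivity alone is too weak, since unital positive maps are not completely bounded in general. The proof must therefore route the completely-contractive side entirely through Kirchberg's QWEP surjection and reserve the positive hypotheses solely for manufacturing the normal unital positive lifting. The secondary recurring nuisance — that the various return maps into $M$ need not be normal — is what forces the normal-part argument in (i) $\Rightarrow$ (iii) and the ultrafilter limit in (ii) $\Rightarrow$ (i).
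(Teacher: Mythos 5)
Your proposal has a genuine gap at its very first step, and it is fatal to the whole cycle. In (i) $\Rightarrow$ (iii) you set $T_\a=v\phi_\a u$ with $\phi_\a$ the semidiscreteness maps of $B(H)$, and you correctly notice that the weak* convergence $v(\phi_\a u(x))\to vu(x)=x$ requires $v$ to be normal. But your repair --- replacing $v$ by its normal part $v_n$ and arguing that $v_nu=Id_M$ because $v|_{u(M)}=u^{-1}$ is normal --- does not work. Writing $v=v_n+v_s$ (Tomiyama decomposition), the normality of $v|_{u(M)}$ only tells you that $v_s|_{u(M)}=v|_{u(M)}-v_n|_{u(M)}$ is a \emph{normal} map on $u(M)$; it does not make it zero. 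A singular map can restrict to a nonzero normal map on a weak*-closed subspace: e.g.\ a singular state on $B(\ell_2)$ restricted to $\CC 1$ is normal and nonzero, so already for $M=\CC$, $u(\lambda)=\lambda 1$, $v$ a singular state, one has $vu=Id_\CC$ but $v_nu=0$. The decisive sanity check is this: if your shortcut were valid, the identical argument with $v$ a (non-normal) conditional expectation would prove ``injective $\Rightarrow$ semidiscrete'' in three lines, whereas that implication is the deep content of Connes's theorem. Overcoming precisely this non-normality of the return map is what the paper's entire machinery is for: the Pietsch-type factorization (Proposition \ref{oo2}) producing finite-rank positive $h_\a$ with the inequality \eqref{282}, the self-polar form and the Woronowicz--Connes Theorem \ref{wc} upgrading that inequality to the equality \eqref{d30b}, and Corollary \ref{rc2} (standard form) converting the resulting functionals into \emph{normal} unital positive finite-rank maps $W_{\a,\vp}$ with the compression property that yields the matricial factorization. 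None of this, nor any substitute for it, appears in your proposal.

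The same defect undermines your other hard steps. In (iii) $\Rightarrow$ (ii), ``the metric surjectivity of $q$ is what allows positivity to be preserved when the intermediate algebra is cut down to a finite block'' is not an argument: cutting $B(K)$ down to $P_\beta B(K)P_\beta$ and re-expanding gives maps converging to $T_\a$ only weak*, so you hit the non-normal return map again; the paper instead avoids ever proving (iii) $\Rightarrow$ (ii) directly, proving (iii) $\Rightarrow$ (i) via Vesterstr\o m's positive lifting theorem for finite-dimensional operator systems (Theorem \ref{Mi} and Lemma \ref{22/10}), which your sketch never invokes, and obtaining (ii) only out of the standard-form proof of (i) $\Rightarrow$ (ii). Finally, your treatment of the ``Moreover'' statement is circular: saying that ``the positive/PAP data is spent building a compatible normal unital positive lifting'' of the QWEP surjection merely restates Proposition \ref{kiki1}, i.e.\ restates what must be proved. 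The paper's actual mechanism is Lemma \ref{28} (a merely positive $v$ still satisfies the max-tensor inequality on self-adjoint tensors $\sum \ovl{x_j}\otimes x_j$), combined with Remark \ref{fac1} to normalize to unital maps, after which the standard-form argument runs verbatim; that is the one place where positivity of $v$ is genuinely exploited. The parts of your proposal that do work --- the $\ell_\infty$-sum and ultrafilter assembly in (ii) $\Rightarrow$ (i), and the normality of $U=(u_\a)$ --- coincide with the paper's Proposition \ref{fac} and are the easy steps.
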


 \begin{rem} We do not know whether  
 any $M$ with the weak* PAP
 is QWEP, although the c.p. analogue holds.
 See \cite{JLM} for an illuminating discussion on the latter assertion.
 \end{rem}
 \begin{rem}[On free groups]\label{caha} If $G$ is a free group
 the metric AP  for 
 $C^*_\lambda(G)$ was discovered by Haagerup \cite{Hainv},
 as well as the existence of a sequence   of 
 completely contractive finite rank
 multipliers on
 the von Neumann algebra $L(G)=C^*_\lambda(G)''$ that form a weak* approximation of 
 the identity. 
 Of course the    \emph{completely  positive} analogue is excluded since
 it implies injectivity. Nevertheless,
 using very similar ideas,
  Haagerup and de Canni\`ere
\cite[Th. 4.6]{CaHa} proved that there is a sequence  of finite rank
 normal  multipliers   that are \emph{positive}, unital
 and form a weak* approximation of the identity on $L(G)$
 (see also \cite{JVa} for groups with property RD with respect to
 a conditionally negative definite length function).
 In fact they obtain, for any integer $k\ge 1$ fixed in advance,
 a sequence of unital $k$-positive multipliers.
 By (iii) $\Rightarrow$ (i) this shows that 
 $L(G)$     is seemingly injective,  and  actually
 we can find a factorization with a $u$ that is  $k$-positive, unital and normal.
 \end{rem}
 \begin{rem}[On Jordan algebras]  The ranges of unital positive projections $P$ on $B(H)$
 (such as $P=uv$ which projects onto $u(M)$) are described
 by Effros and St\o rmer in \cite{ESt}.
 They show that such ranges are Jordan algebras
 for the Jordan product defined by 
 $a\circ b: =P(ab+ba)/2$.
  In the situation of Definition \ref{d1} the map
 $u: M \to P(B(H))$ being isometric is a Jordan morphism,
 and hence satisfies $P(u(x)u(y)+u(y)u(x))=u(xy+yx)$
 for all $x,y\in M$.

Perhaps, some version of Theorem \ref{ot1} remains valid 
when $M$ is only a unital JBW* algebra, for instance: 
if such an $M$ is seemingly injective, 
does it have the weak* metric AP ? \\ 
A variant of this question is as follows. Consider a 
unital weak* closed subspace $Z\subset B(H)$ which is the range of a unital contractive projection.
Does $Z$ have the weak* metric AP ?
Perhaps one should consider the same question with $B(H)$ replaced
by a unital JBW* algebra  satisfying some suitable variant of the weak* positive metric  AP ?

 The range of a contractive projection $P$ on $B(H)$
 (or on a $J^*$-algebra) is described by Friedman and Russo
 in \cite{FR}. 
 They show that such a range  is a
 Jordan triple system in the triple product
 $(x,y,z) \mapsto P( xy^*z+  zy^*x)/2$.
 The latter can be faithfully represented as a 
 $J^*$-algebra in the sense of Harris \cite{Harr,Harr2}.
 A $J^*$-algebra is a norm closed subspace 
 $X\subset B(H,K)$ ($H,K$ Hilbert spaces) that is closed for the operation
 $X \ni x\mapsto xx^*x$. 
 If $u$ and $v$ are contractive in \eqref{e1}, $u: M \to P(B(H))$ is a triple morphism
which means that
$u ( xy^*z+zy^*x  ) =P( u(x)u(y)^*u(z)+ u(z)u(y)^*u(x)) )$
for any $x,y,z \in M$.
 \end{rem}
  \begin{rem}\label{cou3}
  Let $M$ be seemingly injective as in Definition
  \ref{d1} with $u$ normal unital positive
  and $v$ unital c.p. Then it is easy to check that 
  for any unitary $U$ in $M$, since $\|u(U)\|=1$ and $v(u(U))=U$,  $u(U)$ must be in the multiplicative domain of 
  $v$. 
See e.g. \cite[\S 5.1]{P6} or \cite{Pa2} for background on multiplicative domains.   
It follows that $u(M)$  is included in the multiplicative domain of 
  $v$, and hence if $P=uv$, we have 
  $$ 
  \forall x,y\in M\quad u(xy)=P(u(x)u(y))\text{ and }  u(x)^*=u(x^*) .$$
Thus if we equip the range $P(B(H))$ ($=u(M)$) with the product
defined by 
 $a\circ b: =P(ab)$ as in \cite[Th. 3.1]{[CE3]}, we find 
 a copy of $M$.
\end{rem}

 \begin{rem}\label{co}
 The equivalence (i) $\Leftrightarrow$ (ii) in Theorem \ref{ot1} is analogous to the
 equivalence of injectivity
 and  the weak* CPAP
 (also often called ``semidiscreteness") for von Neumann algebras. 
 This is a celebrated result of Connes \cite{Co} and
 Choi-Effros \cite{[CE4]}  (see also \cite{Wac} for an alternate proof).
As   mentioned by Connes \cite[p. 104]{Co} part of
his argument  for injective $\Rightarrow$ semidiscrete is implicit in Effros and Lance's \cite[proof of Prop. 4.5]{EL}. More details on that kind of   argument (that we also use below) can be found 
at least in the semi-finite case in   \cite[Th. 8.12, p. 166]{P6} (where unfortunately
the reduction to the semi-finite case is incorrect).
 \end{rem}
  \begin{rem}[About ``hyperfiniteness"]\label{hyp}
  By Connes's results \cite{Co}, if $M$ is a  finite injective von Neumann algebra
  there is a net of  \emph{completely positive} normal unital finite rank \emph{projections} (i.e. idempotent maps) $T_\a : M \to M$ that tend pointwise weak* to the identity of $M$.
  One could wonder whether in the seemingly injective case
  the same holds with  \emph{completely positive} replaced by \emph{positive}.   \end{rem}
  
 It is known (see \cite{CS}) that  if we restrict
 to the non-nuclear and  separable predual case, all injective von Neumann algebras
 are isomorphic as  Banach spaces. Actually any
 infinite dimensional injective operator system on a separable Hilbert space  
 is isomorphic either to $\ell_\infty$ or to $B(\ell_2)$
 (see \cite{RW},  and also \cite{Blow}  for a related result).
  Curiously, 
 the same is true for  the free group factors as in Remark \ref{caha}, because of the following
 fact well known to specialists.
 
 \begin{pro}\label{op1} If we restrict
 to the non-nuclear 
  case, any von Neumann algebra $M$  that is isomorphic (as a Banach space)
to a complemented subspace of $B(\ell_2)$
 is isomorphic (as a Banach space) to $B(\ell_2)$. 
 \end{pro}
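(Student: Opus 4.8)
The plan is to deduce the isomorphism from Pe\l czy\'nski's decomposition method, for which the essential point will be to produce a complemented copy of $B(\ell_2)$ inside $M$; the reverse complementation is exactly the hypothesis. I will use the method in the following asymmetric form: if $X$ is isomorphic to a complemented subspace of $Y$, $Y$ is isomorphic to a complemented subspace of $X$, and $X$ is isomorphic to its own $\ell_\infty$-sum $\ell_\infty(X):=(\bigoplus_{n\ge 1}X)_{\ell_\infty}$, then $X\cong Y$. (Writing $Y\cong X\oplus C$ and $X\cong Y\oplus D$, self-similarity of $X$ first gives $X\cong Y\oplus X$; substituting $Y\cong X\oplus C$ and using $X\cong X\oplus X$ then yields $X\cong X\oplus C\cong Y$.) I would take $X=B(\ell_2)$ and $Y=M$. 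The required self-similarity $B(\ell_2)\cong\ell_\infty(B(\ell_2))$ is itself an instance of the same method: $\ell_\infty(B(\ell_2))$ is complemented in $B(\ell_2)$ by the block-diagonal (normal, completely positive, contractive) conditional expectation on $B(\bigoplus_n\ell_2)\cong B(\ell_2)$, while $B(\ell_2)$ sits complementably in $\ell_\infty(B(\ell_2))$ as a first coordinate, and $\ell_\infty(B(\ell_2))$ is visibly isomorphic to its own $\ell_\infty$-sum, so the method applies with the self-similar space in the role of $X$.

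Everything thus reduces to complementing $B(\ell_2)$ in $M$, and this is where non-nuclearity enters and where I expect the only genuine difficulty. Since finite direct sums $\bigoplus_{k\le N}A_k\bar\otimes M_{n_k}$ with the $A_k$ abelian are nuclear, a non-nuclear $M$ cannot be subhomogeneous; hence its type I part has unbounded degree or it has a type II or type III summand. In either case I can choose mutually orthogonal projections $p_k\in M$ and normal unital $*$-homomorphisms $M_{n_k}\to p_kMp_k$ with $n_k\to\infty$ (for a ${\rm II}_1$ summand, take orthogonal $q_k$ of positive trace and a copy of $M_k$ in each ${\rm II}_1$ corner $q_kMq_k$; for the type I case use the summands of unbounded degree). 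As each $M_{n_k}$ is a finite type I factor sitting unitally in $p_kMp_k$, one has $p_kMp_k\cong M_{n_k}\bar\otimes(M_{n_k}'\cap p_kMp_k)$ and hence a normal conditional expectation $p_kMp_k\to M_{n_k}$; composing the block-diagonal expectation $x\mapsto\sum_k p_kxp_k$ with $\bigoplus_k$ of these produces a normal completely positive contractive projection of $M$ onto a copy of $\ell_\infty(\{M_{n_k}\})$. So $\ell_\infty(\{M_{n_k}\})$ with $n_k\to\infty$ is complemented in $M$.

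It then remains to complement $B(\ell_2)$ inside $\ell_\infty(\{M_{n_k}\})$. Passing to a subsequence I may assume $n_k$ strictly increasing and realize the abstract blocks as $M_{n_k}=B(F_k)$ for a nested sequence $F_1\subset F_2\subset\cdots$ with $\overline{\bigcup_k F_k}=\ell_2$; let $P_k$ be the orthogonal projection of $\ell_2$ onto $F_k$. The embedding $\alpha\colon a\mapsto(P_kaP_k)_k$ is a unital complete contraction of $B(\ell_2)$ into $\ell_\infty(\{M_{n_k}\})$, and a left inverse is supplied by a weak* ultrafilter limit: fixing a free ultrafilter $\cl U$ on $\NN$ and viewing each $b_k\in B(F_k)$ as $b_k\oplus 0\in B(\ell_2)$, the map $\beta\colon(b_k)_k\mapsto\lim_{\cl U}(b_k\oplus 0)$ (weak* limit) is linear and contractive, with $\beta\alpha=Id_{B(\ell_2)}$ because $P_kaP_k\to a$ weak* for every $a$ (as $P_k\to I$ strongly and the predual is the trace class). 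Combining these with the projection of the previous paragraph gives a bounded projection of $M$ onto a copy of $B(\ell_2)$, completing the hypotheses of the decomposition method and yielding $M\cong B(\ell_2)$. The main obstacle is the structural step that a non-nuclear von Neumann algebra is non-subhomogeneous and therefore carries the complemented block family $\ell_\infty(\{M_{n_k}\})$ with $n_k\to\infty$; once that is secured, the ultrafilter argument and the decomposition method are routine.
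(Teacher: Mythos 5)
Your proposal is correct and takes essentially the same route as the paper: Pe\l czy\'nski's decomposition method together with the chain of complementations $B(\ell_2)\subset \ell_\infty(\{M_{n_k}\})\subset M$, plus the self-similarity $B(\ell_2)\simeq \ell_\infty(B(\ell_2))$ extracted from the same method. The only real difference is that the paper cites Wassermann's theorem for the isometric complementation of $\ell_\infty(\{M_n\})$ in any non-nuclear von Neumann algebra and dismisses the complementation of $B(\ell_2)$ in the block sum as ``elementary,'' whereas you prove both from scratch (via the type decomposition with slice-map conditional expectations, and via the compression/ultrafilter argument, respectively), which makes your argument self-contained at the cost of some routine details left unverified in the type III and infinite-dimensional homogeneous type I cases.
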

   \def\BB{\bb B}
  \begin{proof}
   This follows by a well known application of Pe\l czy\'nski's decomposition method.
    Let $\B=B(\ell_2)$ and
   let  $\BB$ denote the $\ell_\infty$-sense direct sum of the family $\{M_n\mid n\ge 1\}$.
    On one hand by elementary arguments one shows easily that  $\B$ embeds completely 
   isometrically  as a complemented subspace in $\BB$.
    On the other hand, by a result due to S. Wassermann
  (\cite{Wa} or \cite[Th. 12.29]{P6}) $\BB$ is isometric to a complemented subspace of any non-nuclear $M$.
  In both cases the projections are contractive, but this is irrelevant at this point.
  All we need is the following
  \\
 {\bf Fact (Decomposition method)}: Let $B,M $  be Banach spaces
 such that each is isomorphic to a complemented subspace of the other.
 If we also assume 
 $B \simeq  \ell_\infty(B)$ then $M \simeq B$.
    
    Applying this with $B= \ell_\infty(\B)$ and $M=\B$, we find $\B\simeq \ell_\infty(\B)$.
 Then we can apply this again but with $B=\B$ and $M$ unchanged. This yields the proposition.
 \\
We now  include the proof of the above fact  for the reader's convenience.
Assume that $B,M,X,Y$ are Banach spaces such that
   $B\simeq M \oplus X$ and 
    $M\simeq B \oplus Y$.
    Since $B \simeq M \oplus X\simeq B \oplus Y \oplus X$, 
    the isomorphism $B \simeq  \ell_\infty(B)$
    implies $B \simeq  \ell_\infty(B \oplus Y \oplus X)$.
    By absorption (i.e. since $\N \cup\{0\}\simeq \N$), we have $$B\oplus Y \simeq \ell_\infty(B \oplus Y \oplus X) \oplus Y
    \simeq
     \ell_\infty(B \oplus Y \oplus X)\simeq B.$$
    Since $M\simeq B\oplus Y$, we conclude   $M \simeq B$.
   \end{proof}
 \begin{cor} 
 The free group factors $L(G)$ (or the factors described in  Remark \ref{caha}) are 
 isomorphic (as  Banach spaces) to $B(\ell_2)$. 
 \end{cor}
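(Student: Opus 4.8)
The plan is to assemble three facts that are already in hand: the seeming injectivity of $L(G)$, the mixed-injectivity reformulation of Remark~\ref{mixed}, and Proposition~\ref{op1}. The corollary is then pure bookkeeping, with the genuine mathematics residing in Remark~\ref{caha}/Theorem~\ref{ot1} and in Proposition~\ref{op1}.

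First I would invoke Remark~\ref{caha}, which already records that $L(G)$ is seemingly injective: the de Canni\`ere--Haagerup multipliers furnish a net of normal finite rank unital positive maps tending to $Id_{L(G)}$ weak*, so $L(G)$ has the weak* PAP, and since $L(G)$ is QWEP the implication (iii) $\Rightarrow$ (i) of Theorem~\ref{ot1} supplies a factorization \eqref{e1} with $u$ normal unital positive and $\|v\|_{cb}=1$. In particular $L(G)$ is remotely injective, the defining conditions forcing $\|u\|=1$ and $\|v\|_{cb}=1$. By Remark~\ref{mixed} this identifies $L(G)$ isometrically with the range $Z=u(M)$ of the contractive projection $P=uv$ on $B(H)$; that is, $L(G)$ is isometric to a $1$-complemented subspace of $B(H)$. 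Because a free group factor on a countable free group has separable predual, Remark~\ref{or1} lets me take $H=\ell_2$, so $L(G)$ is isomorphic (indeed isometric) to a complemented subspace of $B(\ell_2)$.

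Finally $L(G)$ is a non-injective $\mathrm{II}_1$ factor, hence a fortiori non-nuclear, so the hypotheses of Proposition~\ref{op1} are met and it yields $L(G)\simeq B(\ell_2)$ as Banach spaces. The same three steps apply unchanged to every factor $M$ of Remark~\ref{caha}: each is seemingly injective by construction, has separable predual, and is non-nuclear. The only point demanding a moment's care is the non-nuclearity hypothesis of Proposition~\ref{op1} — one must confirm that Wassermann's complemented embedding of $\mathbb{B}=\bigoplus_n M_n$ is available — but this is immediate since non-injectivity of $L(G)$ entails its non-nuclearity.
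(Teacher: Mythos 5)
Your proposal is correct and follows essentially the same route as the paper: the paper's proof likewise combines Theorem \ref{ot1} (via Remark \ref{caha}) with Remark \ref{or1} to realize $L(G)$ isometrically as a complemented subspace of $B(\ell_2)$, and then concludes by Proposition \ref{op1} using non-nuclearity. Your additional steps (Remark \ref{mixed} making the $1$-complemented copy explicit, and the observation that non-injectivity gives non-nuclearity) are just the details the paper's two-line proof leaves implicit.
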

   \def\BB{\bb B}
  \begin{proof}
  By  Theorem \ref{ot1} and Remark \ref{or1},
  $L(G)$ is isometric to a complemented subspace of $\B$.
   \end{proof} 
   
   \begin{rem} The assumption in Proposition \ref{op1} holds for any QWEP  $M$ with separable predual that has the weak* BAP, see  Remark \ref{or2}.
 Indeed, if $M_*$ is separable and has the BAP there is a \emph{sequence} of
    finite rank normal maps on $M$ tending weak* to the identity.
   \end{rem}

   Our terminology and notation is mostly standard, except that 
   we   denote by $M \otimes N$ the \emph{algebraic} tensor product
   of two Banach spaces. When $M$ is a von Neumann algebra we also denote by $\ovl M$ the complex
   conjugate algebra, i.e. the ``same" algebra but with complex conjugate complex multiplication,
   so that $\ovl M$ is anti-isomorphic to $M$. For any $y\in M$ we denote by
   $\ovl y$ the same element considered as an element of $\ovl M$.
   As is well known the mapping $\ovl y \mapsto y^*$
   defines an isomorphism from $\ovl M$ onto the opposite algebra $M^{op}$ of $M$
   (i.e. the ``same" algebra but with  product in reverse order).
   
 For basic facts and undefined notions,
 we refer the reader to \cite{Tak,Tak2}  for operator algebras
 and to \cite{ER,Pa2,P4} for operator spaces.

 \section{Approximation and lifting properties}\label{bg}
 
 A Banach space $X$
has the metric AP (in short MAP) if its identity $Id_X$
is the pointwise limit of a net of finite rank contractions.
If $X$ is a dual space we say that it has 
the weak* MAP if 
there is such a net but converging 
pointwise to $Id_X$ for the weak* topology. 
Equivalently, assuming $X=(X_*)^*$, this means that $X_*$ has the MAP.
Indeed, by local reflexivity the finite rank contractions on $X$ may be assumed
weak*  continuous and,
  taking convex combinations,
a pointwise weakly convergent  approximating net of finite rank maps on  $X_*$
can be transformed into a norm convergent one.

An operator system $X\subset B(H)$
has the positive MAP (in short PMAP)  if its identity $Id_X$
is the pointwise limit of a net of finite rank   positive contractive maps.
If $X$ is weak* closed in $B(H)$ we say that $X$ has the
weak*   PMAP  if 
there is such a net  formed of weak* continuous   maps that converge
pointwise to $Id_X$ for the weak* topology.
In the particular case when $X$ is a von Neumann algebra,
the existence of a uniformly bounded net of positive normal finite rank maps tending pointwise weak*
to the identity implies the weak*   PMAP (see Lemma \ref{upmap}).
If the net is formed of positive unital maps (as in Definition \ref{defap})
these are  contractive so the weak*   PMAP is automatic.

To emphasize the parallel with the weak* CPAP of \cite{EL}
we adopt the following definitions (it would be more precise
to add ``unital" to weak* PAP and weak* CPAP, but we choose to
abbreviate):

\begin{dfn}\label{defap} We say that a von Neumann algebra $M$ has the 
weak* PAP if there is a net of unital positive normal finite rank maps $(T_\a)$ 
that tend pointwise weak* to the identity on $M$.\\
We say that $M$ has the 
matricial weak*  PAP if 
in addition the maps $(T_\a)$ admit a matricial factorization as in
  (ii) in Theorem \ref{ot1}.\\
  We say that $M$ has the 
weak*  CPAP (resp. matricial weak*   CPAP) 
 if in addition to the preceding properties the maps
 $(T_\a)$  (resp. $u_\a,v_\a$) are all c.p. 
 \end{dfn}

\begin{rem}\label{pb}
Let $A$ be a $C^*$-algebra with a (self-adjoint 2-sided closed) ideal $I$
so that $A/I$ is a quotient $C^*$-algebra. Let $q:A \to A/I$ be the quotient map. 
We will denote by $i_A: A \to A^{**}$ the canonical inclusion.
It is well known that the bidual $A^{**}$ (which  is a von Neumann algebra)  admits
a decomposition as $I^{**} \oplus (A^{**}/I^{**})$.
In particular, there is a contractive lifting $A^{**}/I^{**} \to A^{**}$
and a fortiori from $A/I $ to $A^{**}$.
The question whether, when $A/I$ is separable, there is always
 a bounded (or even an isometric) lifting from $A/I $ to $A$ has remained
 open since the works of Andersen and Ando from the 1970's.
 In the broader setting of $M$-ideals 
counterexamples are known (see \cite{HWW}),
but not for ideals in $C^*$-algebras.
In \S \ref{sq} we propose an approach 
  to this question 
based  on the conjecture  that there are QWEP
von Neumann algebras that are not
  remotely injective.
  
 We will use  several facts due to Andersen \cite[Th. 7]{TBA}
 (in the real linear setting)
    and Ando \cite{An} (in full generality), as follows.
     If    $A/I$  has  the   MAP  and is separable there
     exists a    contractive (and hence isometric) lifting $r: A/I \to A$.
     More generally,   if $X\subset A/I$ is a separable closed
subspace with  the MAP, the
inclusion    $X\subset A/I$ admits a contractive  lifting  $r: X \to A$.
    In fact,  Ando proved that any map $T: X \to A/I$
    from a separable Banach space $X$ that is 
    the pointwise limit of a net of finite rank contractions
    from $X$ to $A/I$ admits a contractive lifting $r: X \to A$,
    so that $qr= T$.\\
    It seems to have remained open ever since Ando's paper  \cite{An} whether this holds without
    the approximability assumption on $A/I$. This problem was studied
    in Ozawa's PhD thesis \cite{Ozth} (see also \cite{Ozllp} and \cite{[O3]}). 
    \end{rem}

    If    $A/I$  is separable and has  the PMAP,
    T.B. Andersen \cite[Th. 7 (3)]{TBA} proved that  there
     exists a positive   isometric lifting $r: A/I \to A$.
Moreover (see \cite[Th. 7 (2)]{TBA}), in the unital case  if $X\subset A/I$ is a separable
operator system with  the PMAP, the
inclusion    $X\subset A/I$ admits a positive    lifting  $r: X \to A$ that is
 isometric on the   self-adjoint  elements of $X$.
 The proof consists in a reduction to the  case when $X$ is finite dimensional,
 which, incidentally,  is sketched below in Lemma \ref{Mi}.
 By an elementary argument (see Lemma \ref{oo1}),
 one can
obtain a unital positive  lifting  $r: X \to A$.
  \begin{rem}\label{arv}  The modern way to think of Ando's theorem is through
  Arveson's principle (see \cite[p. 351]{Arv}) that says that in the separable case  pointwise limits of ``nicely" liftable maps are ``nicely" liftable.
More precisely, assume given  a separable operator system
  $X$ and a net of maps $u_i : X \to A/I$ ($A/I$ being a quotient $C^*$-algebra), if each $u_i$  admits a lifting in an admissible class (to be defined below) and converges pointwise to a map $u: X \to  A/I$ then
  $u$ itself admits a lifting in the same   class.
  
  A bounded subset  $\cl F\subset  B(X,A)$ will be called admissible if 
   for any pair $f,g$ in $\cl F$
   and any $\sigma \ge 0$ in $I_+$
   the mapping
   $$x\mapsto \sigma^{1/2} f(x)  \sigma^{1/2}+ (1-\sigma)^{1/2} g(x) (1-\sigma)^{1/2} $$
   belongs to $\cl F$.
   Let  $q: A \to A/I$ denote the quotient map and let
   $$q(\cl F)=\{ qf\mid f\in \cl F\}.$$
  Then Arveson's principle 
    (see \cite[p. 351]{Arv}) says that
    for the topology of pointwise convergence
   on $X$ we have
   $$\ovl{q(\cl F) } =q(\ovl{ \cl F  } ).$$
   Actually we do not even need to assume $\cl F$ bounded
   if we restrict to the pointwise convergence on a countable
   subset of $X$.
\begin{rem}\label{rarv}
In particular if $X$ is finite dimensional, we do not need to assume $\cl F$ bounded.
For instance, taking for $\cl F$  the class of positive maps, let  $X$ be a  finite dimensional operator system. If a  map $u: X \to A/I$
  is the pointwise limit of  maps that admit positive liftings, then $u$ itself admits a positive lifting.
\end{rem}
  
  The   classes   of positive contractions,
  unital positive maps,   unital c.p. maps, 
   contractions (resp. 
  complete contractions) are all admissible and in the latter case
  $X$ can be an arbitrary Banach (resp.  operator) space.
  More generally, for each fixed $k\ge 1$, the class
  of unital $k$-positive maps on an operator system and  that of 
  $k$-contractions on an operator space (meaning maps $u$ such that 
  $Id_{M_k} \otimes u$ is contractive) are admissible.
  
  The proof uses quasi-central approximate units in $I$. By this we mean a 
  non decreasing
  net $(\sigma_\a)$
    in the unit ball of $I_+$ 
  such that   for any $a$ in $A$ and any $b$ in
${\cl I}$
\begin{equation}\label{da16}
\|a\sigma_{\a}-\sigma_{\a} a\|\to 0\quad \hbox{and}\quad
\|\sigma_{\a} b -b\| \to 0.\end{equation}
  The reasoning is formally  the same in all cases as can be checked
  in the presentations \cite[p. 266]{[Da2]}  or \cite[p. 46 and p. 425]{P4}.
  Using this principle, one can  reduce the lifting problem
  of an approximable map $u: X \to  A/I$ roughly to that
  of lifting  finite dimensional subspaces of  $  A/I$.
 \end{rem}
 
 The next  lemma is a  simple well known fact
 describing how to obtain unital   liftings  
 when the  map to be lifted is itself unital. 
\begin{lem}\label{oo1}
Let $E$ be an operator system and let $u: E \to A/I$ be a unital positive
mapping.  
 If there is a positive  lifting $r: E \to A$ then there is
 a unital positive   one
 $r': E \to A$.
\end{lem}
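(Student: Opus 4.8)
The plan is to start from the given positive lift $r$ and correct it in two stages: first rescale so that the image of the unit is dominated by $1$, and then add a rank-one term that pins the unit to the unit. Throughout I take $A$ to be unital (which is forced if we want $r'$ unital), with $q(1)=1_{A/I}$. Write $a=r(1_E)$. Since $r$ is positive, $a\ge 0$, and since $qr=u$ with $u$ unital we have $q(a)=u(1_E)=1$. The obvious attempt is to pick a state $\phi$ on $E$ and set $x\mapsto r(x)+\phi(x)(1-a)$; this is linear, unital, and lifts $u$ (because $q(1-a)=0$), but it generally \emph{fails to be positive}, since $a\ge 0$ does not force $1-a\ge 0$. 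Overcoming exactly this failure is the crux of the argument.

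To secure positivity of $1-a$, I would first replace $r$ by a conjugate. Let $c$ be obtained from $a$ by continuous functional calculus applied to $t\mapsto\max(t,1)$; equivalently $c=a+(1-a)_+$, where $(\cdot)_+$ denotes the positive part. Then $c\ge 1$ is invertible, $c\ge a$, and since $q$ intertwines functional calculus, $q(c)=\max(q(a),1)=1$. Setting $r_1(x)=c^{-1/2}r(x)c^{-1/2}$ yields again a positive map (conjugation by the self-adjoint element $c^{-1/2}$ preserves positivity), and $q(c^{-1/2})=1$ gives $qr_1=u$, so $r_1$ is still a positive lift of $u$. The point of this step is that now $r_1(1_E)=c^{-1/2}ac^{-1/2}\le c^{-1/2}cc^{-1/2}=1$.

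After this reduction I may write $a_1:=r_1(1_E)$ with $0\le a_1\le 1$, so that $1-a_1\ge 0$ and moreover $1-a_1\in I$ (as $q(1-a_1)=0$). The final step is then the rank-one correction that the naive attempt suggested, but now with a positive element in hand. Fix any state $\phi$ on the operator system $E$ (a unital positive functional, which exists since the unit is an order unit) and define $r'(x)=r_1(x)+\phi(x)(1-a_1)$. This is linear into $A$; for $x\ge 0$ both summands $r_1(x)$ and $\phi(x)(1-a_1)$ are positive, so $r'$ is positive; it is unital because $r'(1_E)=a_1+(1-a_1)=1$; and it lifts $u$ since $q(1-a_1)=0$ forces $qr'=qr_1=u$. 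Hence $r'$ is the required unital positive lift.

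I expect the genuinely substantive step to be the rescaling: one must produce a positive invertible $c$ that simultaneously dominates $a$ and reduces to $1_{A/I}$ in the quotient, so that conjugation both preserves the lifting relation and forces $r_1(1_E)\le 1$. Once that is done, the correction term is purely formal. It is worth noting that the argument uses neither separability nor finite-dimensionality of $E$, matching the generality of the statement, and that it would adapt verbatim to the unital $k$-positive or unital c.p.\ settings, since conjugation and the rank-one perturbation respect those stronger positivity classes as well.
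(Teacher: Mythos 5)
Your proof is correct, and it takes a genuinely different route from the paper's. The paper proves this lemma with a quasi-central approximate unit $(\sigma_\alpha)$ of $I$: it first perturbs $r$ to $r_\alpha(x)=f(x)\sigma_\alpha+(1-\sigma_\alpha)^{1/2}r(x)(1-\sigma_\alpha)^{1/2}$ (with $f$ a state on $E$), observes that $r_\alpha(1)-1=(1-\sigma_\alpha)^{1/2}[r(1)-1](1-\sigma_\alpha)^{1/2}\to 0$ in norm because $r(1)-1\in I$, fixes $\alpha$ large enough that $r_\alpha(1)$ is invertible, and only then conjugates by $r_\alpha(1)^{-1/2}$. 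You reverse the order and, more importantly, replace the approximate-unit argument by an exact functional-calculus one: conjugating by $c^{-1/2}$ with $c=\max(r(1),1)$ forces $r_1(1)\le 1$ outright (no nets, no limits, no choice of index), after which the correction $x\mapsto \phi(x)(1-r_1(1))$ is purely algebraic since $1-r_1(1)$ is a positive element of $I$. What your version buys is brevity and exactness: every object is constructed in one step by functional calculus, and the verification of positivity, unitality and the lifting property is immediate. What the paper's version buys is uniformity with the rest of its machinery: the perturbation $x\mapsto f(x)\sigma + (1-\sigma)^{1/2}g(x)(1-\sigma)^{1/2}$ is exactly the admissible-class operation of Arveson's lifting principle (Remark \ref{arv}), which the paper needs anyway for the genuinely hard lifting theorems, so the lemma is proved by the same template. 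Your closing observation is also accurate: both arguments (yours and the paper's) pass verbatim to unital $k$-positive and unital completely positive liftings, since conjugation and the map $x\mapsto\phi(x)b$ with $b\ge 0$ preserve those classes.
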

\begin{proof} Let $(\sigma_\a)$ be a quasi-central approximate unit in $I$.
Let $f$ be a state on $E$.
Let $r_\a(x)=f(x) \sigma_\a + (1-\sigma_\a)^{1/2}r(x) (1-\sigma_\a)^{1/2}.$ 
  Then 
  $r_\a(1) -1=(1-\sigma_\a)^{1/2}[r(1) -1] (1-\sigma_\a)^{1/2}$
  and $r_\a$ is still a positive  lifting. In particular $q (r_\a(1))=1$.
  Since $r(1) -1\in I$ we have
  $\|r_\a(1) -1\|\to 0$ by \eqref{da16}.
  Choosing and fixing $\a$ large enough we may assume that
  $r_\a(1)$ is invertible.
  We  then
  set for any $x\in E$
  $$r'(x)= r_\a(1)^{-1/2} r_\a(x) {r_\a(1)}^{-1/2}.$$
  By functional calculus $q(r_\a(1)^{-1/2})=q(r_\a(1))^{-1/2}=1$.
  Thus $r'$ is a positive unital lifting for $q: A \to A/I$.
\end{proof}

 The next Theorem \ref{Mi}  is the basic ingredient used 
in \cite{TBA} by T.B. Andersen to
 prove that   positive maps admit  positive liftings   when $E$
 is separable  with the PMAP.
\begin{thm}[Vesterstr\o m \cite{Ves}]\label{Mi} 
Let $E$ be a f.d.  operator system and let $u: E \to A/I$ be a unital positive
mapping.  
Then $u$ admits   a unital positive  lifting $r: E \to A$ (i.e. a map such that $qr(x)=u(x)$
for any $x\in E$).
\end{thm}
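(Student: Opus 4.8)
I assume throughout that $A$ is unital (the natural setting here, since $u$ unital forces $A/I$ unital). The plan is to strip off two of the three demands — unitality of the lift and \emph{exactness} of the relation $qr=u$ — before doing any real work. For unitality, any positive lifting $r$ of the unital positive map $u$ is promoted to a unital one by Lemma \ref{oo1}, so it suffices to produce a positive (not necessarily unital) $r:E\to A$ with $qr=u$. For exactness, I would invoke Arveson's principle in the finite-dimensional form of Remark \ref{rarv}, with $\cl F$ the class of positive maps: if $u$ is the pointwise limit of maps $E\to A/I$ each admitting a positive lifting, then $u$ admits one. This is exactly where the finite-dimensionality of $E$ is used essentially (no boundedness of $\cl F$ is needed). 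The whole problem is thereby reduced to constructing a net of positive maps into $A/I$ that visibly lift and tend pointwise to $u$.

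To build these approximants I would start from any self-adjoint linear lift $\tilde u:E\to A$ (arrange $q\tilde u=u$ and $\tilde u(x^*)=\tilde u(x)^*$). For $x\in E_+$ we have $q(\tilde u(x))=u(x)\ge 0$, so the negative part $\tilde u(x)_-$ lies in $I_+$. Since $E$ is finite dimensional, $\{x\in E_+:\|x\|\le 1\}$ is compact and $x\mapsto\tilde u(x)_-$ is norm continuous, so these negative parts form a compact subset of $I_+$. Taking a quasi-central approximate unit $(\sigma_\alpha)$ of $I$ as in \eqref{da16} and compressing, set
\[
r_\alpha(x)=(1-\sigma_\alpha)^{1/2}\,\tilde u(x)\,(1-\sigma_\alpha)^{1/2}.
\]
Each $r_\alpha$ is self-adjoint, linear, and still lifts $u$ (as $\sigma_\alpha\in I$). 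For $x\in E_+$ the positive part of $\tilde u(x)$ contributes a positive operator, while $\|(1-\sigma_\alpha)^{1/2}\tilde u(x)_-(1-\sigma_\alpha)^{1/2}\|\to 0$ uniformly over the compact set above, so $r_\alpha(x)\ge-\varepsilon_\alpha\|x\|\,1$ with $\varepsilon_\alpha\to 0$. To convert approximate positivity into genuine positivity I would fix a faithful state $f$ on $E$ (e.g. the restriction of $\tr(\rho\,\cdot)$ for a faithful density $\rho$ on a finite dimensional $B(H_0)\supseteq E$); compactness of $\{x\in E_+:\|x\|=1\}$ then yields $c>0$ with $f(x)\ge c\|x\|$ for all $x\in E_+$. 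The map
\[
r''_\alpha(x)=r_\alpha(x)+(\varepsilon_\alpha/c)\,f(x)\,1
\]
is then \emph{positive} and linear, while $qr''_\alpha(x)=u(x)+(\varepsilon_\alpha/c)f(x)\,1\to u(x)$ pointwise.

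Finally, $u_\alpha:=qr''_\alpha$ is a positive map into $A/I$ admitting the explicit positive lifting $r''_\alpha$, and $u_\alpha\to u$ pointwise; Remark \ref{rarv} then furnishes a positive lifting $r$ of $u$, which Lemma \ref{oo1} upgrades to a unital positive one. The step deserving the most care — and the main obstacle — is precisely the passage from the cheap \emph{approximate} positivity produced by the compression to the \emph{exact} positivity that Arveson's principle requires of its approximants: the compression alone gives only $r_\alpha(x)\ge-\varepsilon_\alpha\|x\|\,1$, and it is the uniform lower bound $f\ge c\,\|\cdot\|$ on $E_+$ — available solely because $E$ is finite dimensional — that lets the faithful-state term absorb the residual negativity exactly while its image in $A/I$ still vanishes in the limit.
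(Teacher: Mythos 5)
Your proof is correct, but there is nothing in the paper to compare it with step by step: the paper does not prove Theorem \ref{Mi} at all --- it attributes the result to Vesterstr\o m \cite{Ves} and points to Robertson--Smith \cite{RoSm} for a ``quick direct proof''. What you have built is a self-contained argument assembled entirely from the paper's own toolkit, and the assembly is sound. The reduction of unitality to Lemma \ref{oo1} is exactly what that lemma is for; the observation that $\tilde u(x)_-\in I_+$ for $x\in E_+$ is correct because a $*$-homomorphism commutes with continuous functional calculus, so $q(\tilde u(x)_-)=(q\tilde u(x))_-=u(x)_-=0$; the compactness of $\{x\in E_+:\|x\|\le 1\}$ together with the norm continuity of $a\mapsto a_-$ on self-adjoint elements does yield the uniform bound $r_\alpha(x)\ge -\varepsilon_\alpha\|x\|1$ with $\varepsilon_\alpha\to 0$ (note only the approximate-unit half of \eqref{da16} is used here, not quasi-centrality, which enters only inside Arveson's principle and Lemma \ref{oo1}); the perturbation $(\varepsilon_\alpha/c)f(\cdot)1$ restores exact positivity while dying out in the quotient; and Remark \ref{rarv} applies verbatim (finite-dimensional domain, class of positive maps, no boundedness needed), with no circularity since neither Arveson's principle nor Lemma \ref{oo1} relies on Theorem \ref{Mi}. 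This route is arguably closer to the spirit of the paper, which develops Arveson's principle precisely as the engine for such lifting reductions, than a mere citation would suggest.

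One detail should be repaired: your parenthetical justification for the faithful state assumes $E$ sits inside a \emph{finite-dimensional} $B(H_0)$, which is not available in general --- a finite-dimensional operator system need not admit a unital order embedding into a matrix algebra (for unital \emph{complete} order embeddings this already fails for non-$1$-exact systems such as the span of the unit and the generators in a full free group $C^*$-algebra). Fortunately the claim you actually use is elementary and needs no such embedding: for each $x$ in the compact set $S=\{x\in E_+:\|x\|=1\}$ choose a state $f_x$ on $E$ with $f_x(x)>1/2$, cover $S$ by finitely many of the open sets $\{y\in S: f_{x_i}(y)>1/2\}$, and let $f$ be the average of $f_{x_1},\dots,f_{x_m}$; then $f(x)\ge \|x\|/(2m)$ for all $x\in E_+$, which is the inequality $f\ge c\|\cdot\|$ on $E_+$ that your absorption step requires. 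With that substitution the proof is complete.
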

 
In \cite{RoSm} Robertson and Smith give 
 a quick direct proof of Theorem \ref{Mi}.
Moreover, using an averaging argument 
their proof  shows that, for any fixed $n$, 
Theorem  \ref{Mi} remains valid if ``positive" is replaced by 
``$n$-positive" (see also \cite{Kav} for a different argument).

\section{Standard forms}\label{stan}
 
We will   use 
   the fact 
   (due to Araki, Connes and Haagerup, see \cite{Hasta})
   that
any von Neumann algebra $M$ admits a ``standard form'', which means that there is a triple $(H,J, {P}^\natural )$ consisting of a Hilbert space $H$ such that $M\subset B(H)$ (as a von Neumann algebra), an anti-linear isometric involution 
$J:H\to H$ and a cone $P^\natural \subset H$ such that
\begin{enumerate}
 \item[(i)] $J MJ=M^\prime$ and $J xJ=x^* \qquad\forall x\in M\cap M'.$
 \item[(ii)] $ {P}^\natural\subset \{\xi\in H\mid J\xi=\xi\}$   
 and $ {P}^\natural$ is self-dual, i.e.
 $$ {P}^\natural =\{\xi\in H\mid  \langle\xi,\eta \rangle\geq   0\ \forall\eta\in P^\natural \}.$$
 \item[(iii)] $\forall x\in M\quad J xJ x(P^\natural )\subset P^\natural$.
 \item[(iv)] For any $\varphi$ in $M^{+}_{*}$ there is a unique $\xi_\varphi$ in $P^\natural$ such that $\varphi(x)=\langle\xi_\varphi ,x\xi_\varphi \rangle$ for any $x$ in $M$.
\end{enumerate}
\begin{rem}\label{rebla} Let $\tau$ be a normal faithful normalized trace on $M$.
Then the usual representation $M \to B(L_2(\tau))$ (of $M$ acting by left multiplication)
realizes $M$ in standard form, with $J: L_2(\tau) \to L_2(\tau)  $ defined by $J(x)=x^*$,
and ${P}^\natural =L_2(\tau)_+$. For any $\xi\in L_2(\tau)_+$ and $y,x\in M$ we have
$J yJ x \xi=  x \xi y^*$, and if $\xi=1$ we have $\tau(xy^*)=\langle \xi, J yJ x \xi\rangle$.
\end{rem}
\begin{rem}\label{cy} Note that when $\phi$ is a faithful normal state, the 
unit vector $\xi_\varphi$ in $P^\natural$ is both separating and cyclic.
\end{rem}
\begin{rem}\label{cy2} 
For any
unit vector $\xi \in P^\natural$ 
we have for any $\sum \ovl y_j \otimes x_j \in \ovl M \otimes M$
$$|\sum \langle \xi , J y_j J x_j \xi \rangle | \le 
\|\sum \ovl y_j \otimes x_j\|
_{\ovl M \otimes_{\max} M}.$$
Indeed, $(\ovl y,x) \mapsto JyJ x$ 
extends to a (contractive) unital $*$-homomorphism 
from 
${\ovl M \otimes_{\max} M}$ to $B(H)$.
\\
In particular, when $M$ is finite with $\tau$ as in Remark \ref{rebla}
we have
$$|  \sum  \tau( y^*_j   x_j  ) | \le 
\|\sum \ovl y_j \otimes x_j\|
_{\ovl M \otimes_{\max} M}.$$
\end{rem}

\begin{dfn}
Let $A$ be a unital $C^*$-algebra.  A sesquilinear form $s:A\times A\to\mathbb{C}$
(equivalent to  a linear form on $\ovl A \otimes A$) will be called ``bipositive'' if it is both positive definite (i.e.  such that $s(x,x)\ge 0$ for all $x\in A$) and 
such that
\begin{equation}\label{ee2}
 s(a,b)\geq0\qquad\forall a,b\in A_+ .
\end{equation}
We call it normalized if $s(1,1)=1$.
\end{dfn}
To any  normalized such form $s$ we associate a state $\varphi_s$ defined by
$\varphi_s(x)=s(1,x)$.

\begin{dfn}
Let $M$ be a von Neumann algebra.
A  bipositive (sesquilinear) form $s$ on $M\times M$
such that $x\mapsto s(1,x)$ is a normal state on $M$ is called self-polar if 
  for any $\psi\in M^{*}_{+}$ such that $0\leq\psi(x)\leq s(1,x)$ for all $x$ in $M_+$, there is $a\in M$ with $0\leq a\leq1$ such that $\psi(x)=s(a,x)$.
\end{dfn}

We will use the following basic property of the standard form.
\begin{thm}\label{C}
 Let $\varphi$ be a faithful normal state on a ($\sigma$-finite) von Neumann algebra $M$ in standard form.  
 Then the sesquilinear form $s_\phi$ defined for $y,x\in M$ by
 \begin{equation}\label{a57}
 s_\varphi(y,x)= \langle \xi_\varphi, JyJx  \xi_\varphi\rangle  \end{equation}
  is a   strictly positive definite self-polar form  on $M\times M$ such that
 $$s_\varphi (1,x)=\varphi(x)\qquad\forall x\in M.$$
 Moreover, 
a form   $\psi\in M_*$ satisfies $0\le \psi \le \lambda \phi $ ($\lambda>0$)
 if and only if there is a (uniquely defined) $y\in M$ with $0\le y\le 1$  such that
 $\psi(x)= \lambda\langle \xi_\phi , JyJ x  \xi_\phi \rangle$ for all $x\in M$.
\end{thm}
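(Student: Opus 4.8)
The plan is to dispatch the normalization and bipositivity by direct computation, to obtain strict positive definiteness from the Tomita--Takesaki modular operator, and to derive self-polarity (of which the ``moreover'' statement is the quantitative version, self-polarity being the case $\lambda=1$) from the commutant Radon--Nikodym theorem transported through the map $a\mapsto JaJ$. First I would record the easy facts. Since $1\in M\cap M'$, property (i) gives $J1J=1$, and $\xi_\varphi\in P^\natural$ gives $J\xi_\varphi=\xi_\varphi$ by (ii); hence $s_\varphi(1,x)=\langle \xi_\varphi, x\xi_\varphi\rangle=\varphi(x)$ by (iv). For bipositivity, note that $\theta(a):=JaJ$ maps $M$ onto $M'=JMJ$ and is a conjugate-linear, multiplicative, $*$-preserving bijection, hence positivity- and order-preserving with $\theta(1)=1$. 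Thus for $a,b\ge 0$ the operators $JaJ\in M'_+$ and $b\in M_+$ commute, so their product is positive and $s_\varphi(a,b)=\langle\xi_\varphi, (JaJ)b\,\xi_\varphi\rangle\ge 0$.

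For strict positivity I would pass to modular theory. Writing $S=J\Delta^{1/2}$ for the closure of $x\xi_\varphi\mapsto x^*\xi_\varphi$ (so that $M\xi_\varphi\subset \mathrm{dom}\,\Delta^{1/2}\subset\mathrm{dom}\,\Delta^{1/4}$), and using that $JxJ$ commutes with $x$, that $J\xi_\varphi=\xi_\varphi$, and that $J$ is antiunitary, one computes
$$ s_\varphi(x,x)=\langle x^*\xi_\varphi,\, Jx\xi_\varphi\rangle=\langle J\Delta^{1/2}x\xi_\varphi,\, Jx\xi_\varphi\rangle=\langle x\xi_\varphi,\, \Delta^{1/2}x\xi_\varphi\rangle=\|\Delta^{1/4}x\xi_\varphi\|^2. $$
Since $\Delta^{1/4}$ is injective and $\xi_\varphi$ is separating for $M$ (Remark \ref{cy}), this vanishes only when $x=0$; in particular $s_\varphi$ is strictly positive definite. (This single identity also re-proves positive definiteness.)

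The substantive point is self-polarity together with the ``moreover''. Given a normal $\psi$ with $0\le\psi\le\lambda\varphi$, I would first produce the Radon--Nikodym operator inside the commutant: on the dense subspace $M\xi_\varphi$ (here $\xi_\varphi$ is cyclic and separating) the form $(a\xi_\varphi, b\xi_\varphi)\mapsto \psi(a^*b)$ is well defined and bounded by $\lambda$, since $|\psi(a^*b)|\le\psi(a^*a)^{1/2}\psi(b^*b)^{1/2}\le\lambda\|a\xi_\varphi\|\,\|b\xi_\varphi\|$, hence is implemented by a bounded $T$ with $\langle a\xi_\varphi, T b\xi_\varphi\rangle=\psi(a^*b)$. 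A one-line check that $Tz=zT$ on $M\xi_\varphi$ for $z\in M$ shows $T\in M'$, and clearly $0\le T\le\lambda$. Writing $T=\lambda\,JyJ$ with $y=\theta^{-1}(T/\lambda)$, the order properties of $\theta$ give $0\le y\le 1$, and
$$ \psi(x)=\langle\xi_\varphi, xT\xi_\varphi\rangle=\lambda\langle\xi_\varphi, (JyJ)x\,\xi_\varphi\rangle=\lambda\, s_\varphi(y,x). $$
Conversely, for $0\le y\le 1$ the functional $x\mapsto\lambda s_\varphi(y,x)$ is positive and dominated by $\lambda\varphi$ because $0\le JyJ\le 1$ in $M'$. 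Uniqueness of $y$ is immediate: $\xi_\varphi$ cyclic forces $T$ to be determined by $\psi$, and $\theta$ is a bijection. Taking $\lambda=1$ yields self-polarity.

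I expect the main obstacle to be bookkeeping rather than conceptual. The delicate points are to make sure the Radon--Nikodym operator genuinely lands in $M'$ (not in $M$) and inherits the bound $\lambda$, and that positivity and the order relations $0\le\,\cdot\,\le 1$ transfer correctly across the conjugate-linear map $a\mapsto JaJ$. Strict positivity is the one place where the modular operator $\Delta$, rather than the cone axioms (i)--(iv) alone, appears genuinely needed: the self-duality of $P^\natural$ together with property (iii) only yields $JxJx\xi_\varphi=0$, from which $x=0$ is not immediate, whereas the identity $s_\varphi(x,x)=\|\Delta^{1/4}x\xi_\varphi\|^2$ gives it at once.
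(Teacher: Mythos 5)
Your proposal is sound, and it is worth saying at the outset that the paper itself gives no proof of Theorem \ref{C}: the statement is quoted from the standard-form literature, the reader being referred to \cite[Th. 23.30]{P6} for the attributions (Haagerup, Connes, Woronowicz). Your argument is, in substance, the standard proof from that literature: normalization and bipositivity by direct computation (using that $a\mapsto JaJ$ is a positivity-preserving conjugate-linear isomorphism of $M$ onto $M'$, so that $(JaJ)b$ is a product of commuting positive operators); strict positive definiteness from the identity $s_\varphi(x,x)=\|\Delta^{1/4}x\xi_\varphi\|^2$; and the ``moreover'' via the commutant Radon--Nikodym operator $T\in M'$, $0\le T\le\lambda$, rewritten as $\lambda JyJ$. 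The Radon--Nikodym step, including the uniqueness of $y$, is complete and correct as you wrote it.

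Two facts are, however, invoked tacitly and need to be made explicit. First, when you write $S=J\Delta^{1/2}$ for the closure of $x\xi_\varphi\mapsto x^*\xi_\varphi$, you are identifying the involution $J$ of the given standard form with the modular conjugation $J_{\xi_\varphi}$ that Tomita--Takesaki theory attaches to the cyclic separating vector $\xi_\varphi$. This is not contained in axioms (i)--(iv): your computation of $s_\varphi(x,x)$ uses the standard-form $J$ (to commute $JxJ$ past $x$ and to fix $\xi_\varphi$) and the modular conjugation (in $x^*\xi_\varphi=J\Delta^{1/2}x\xi_\varphi$) interchangeably, and the cancellation $\langle J\eta,J\zeta\rangle=\langle\zeta,\eta\rangle$ in your third equality requires them to be the same operator. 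That they are equal --- indeed $J_\xi=J$ and $P_\xi=P^\natural$ for every cyclic $\xi\in P^\natural$ --- is a theorem of Haagerup's standard-form theory \cite{Hasta}, so the gap is repaired by a citation; but it is precisely the nontrivial input behind the step you yourself single out as the one where $\Delta$ is ``genuinely needed''. Second, the paper's definition of self-polarity quantifies over all $\psi\in M^*_{+}$ dominated by $\varphi$ on $M_+$, whereas your lifting of $\psi$ to $T\in M'$ is carried out for normal $\psi$; you should add the standard remark that $0\le\psi\le\varphi$ with $\varphi$ normal forces $\psi$ to be normal (for an increasing net $x_i\nearrow x$ one has $0\le\psi(x-x_i)\le\varphi(x-x_i)\to 0$), after which taking $\lambda=1$ yields self-polarity exactly as you claim.
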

 
 See \cite[Th. 23.30 p. 398]{P6}
for  detailed indications of the references.

We make crucial use of the following fact, 
for which the main idea goes back to Effros and Lance 
in \cite[proof of Prop. 4.5]{EL}.

\begin{cor}\label{rc2} Assuming $M$ in standard form, let $\xi \in P^\natural$ and let 
$\phi\in M_*$ be defined by $\phi(x)=\langle \xi  ,   x  \xi  \rangle$ for all $x\in M$.
Assume that $\phi\in M_*$ is faithful (but not necessarily normalized).
 Suppose that we have a positive linear map $\psi : \ovl M \to M_*$
(we could also view $\psi$ as an anti-linear map on $M$) such that
$  \psi(\ovl 1) =   \phi $.
Then   there is a unique  positive unital linear map $V: M \to M$
such that 
$$\forall x,y \in M \quad \psi(\ovl y)(x)=  \langle \xi , JV(y)J x  \xi  \rangle   .$$
\end{cor}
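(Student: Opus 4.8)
The plan is to use the self-polar property of the standard form provided by Theorem~\ref{C} to manufacture $V(y)$ for each $y$, and then to upgrade the resulting assignment to a genuine linear map via injectivity of a comparison map. Throughout write $s_\phi(a,x)=\langle\xi,JaJx\xi\rangle$, so that $s_\phi$ is conjugate-linear in its first slot and linear in its second, and set $\Phi(a):=s_\phi(a,\cdot)\in M_*$. Since $\phi$ is faithful, Theorem~\ref{C} (applied after normalizing $\phi$ by $\phi(1)=\|\xi\|^2$ and rescaling, which only introduces the constant $\lambda$ below) tells us that $s_\phi$ is strictly positive definite; this makes $\Phi$ injective, because $\Phi(a)=0$ forces $s_\phi(a,a)=0$ and hence $a=0$. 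Note also that $J1J=1$ gives $\Phi(1)=s_\phi(1,\cdot)=\phi$.

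First I would define $V$ on the positive cone. Fix $y\in M_+$. Because $y\le\|y\|1$, applying the positive map $\psi$ to $0\le\ovl y\le\|y\|\,\ovl 1$ and using $\psi(\ovl 1)=\phi$ yields $0\le\psi(\ovl y)\le\|y\|\phi$ in $M_*$. The self-polar property in Theorem~\ref{C} (with $\lambda=\|y\|$) then produces a \emph{unique} element $a$ with $0\le a\le 1$ and $\psi(\ovl y)=\|y\|\,s_\phi(a,\cdot)$; I set $V(y):=\|y\|\,a$, so that $V(y)\ge 0$ and $\Phi(V(y))=\psi(\ovl y)$.

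Next I would check additivity and positive homogeneity on $M_+$. For $y_1,y_2\in M_+$ and $t\ge 0$, the map $x\mapsto\ovl x$ is additive and (conjugate-)homogeneous and $\psi$ is linear, so $\psi(\ovl{y_1+y_2})=\psi(\ovl{y_1})+\psi(\ovl{y_2})$ and $\psi(\ovl{ty})=t\,\psi(\ovl y)$. Comparing these with $\Phi(V(y_1)+V(y_2))$ and $\Phi(tV(y))$ and invoking the injectivity of $\Phi$ gives $V(y_1+y_2)=V(y_1)+V(y_2)$ and $V(ty)=tV(y)$. Hence $V$ extends uniquely to a real-linear map on $M_{sa}=M_+-M_+$, and then, via $V(a+ib):=V(a)+iV(b)$, to a complex-linear map $V:M\to M$; a short check (tracking the conjugate-linearity of both $\Phi$ and $y\mapsto\psi(\ovl y)$ through the decomposition $y=\mathrm{Re}\,y+i\,\mathrm{Im}\,y$) confirms that $\Phi(V(y))=\psi(\ovl y)$, i.e.\ $\psi(\ovl y)(x)=\langle\xi,JV(y)Jx\xi\rangle$, for all $x,y$. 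This $V$ is positive by construction, and it is unital since $\Phi(V(1))=\psi(\ovl 1)=\phi=\Phi(1)$ forces $V(1)=1$. Uniqueness is immediate: any $V'$ satisfying the identity has $\Phi(V'(y))=\psi(\ovl y)=\Phi(V(y))$, whence $V'=V$.

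The genuinely substantive step is the construction of $V(y)$ for $y\ge 0$, which rests entirely on the self-polar property of the standard form; everything afterwards is bookkeeping. The one place demanding care is the passage from the positive cone to a complex-linear map: because $\ovl M$ carries the conjugate scalar multiplication, $y\mapsto\psi(\ovl y)$ is conjugate-linear, and one must match this against the conjugate-linearity of $\Phi$ so that the complex-linear (not conjugate-linear) extension $V$ does satisfy the stated identity on all of $M$.
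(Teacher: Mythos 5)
Your proposal is correct and follows essentially the same route as the paper: both use the ``Moreover'' part of Theorem~\ref{C} (after the obvious rescaling of the non-normalized faithful $\phi$) to produce $V(y)$ on positive elements from the domination $0\le\psi(\ovl y)\le\|y\|\phi$, and then extend by linearity using the uniqueness coming from faithfulness. Your explicit verification of the conjugate-linearity bookkeeping (matching the anti-linearity of $a\mapsto\langle\xi,JaJ\,\cdot\,\xi\rangle$ against that of $y\mapsto\psi(\ovl y)$) and of the injectivity of the comparison map are exactly the details the paper compresses into ``by the uniqueness of $V(y)$ it is easy to extend.''
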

 
\begin{proof}
We have  $0\le \psi(\ovl y) \le   \phi $ for any $\ovl  0\le \ovl y\le \ovl  1$, and hence by    Theorem \ref{C} (and an obvious scaling)
   there is a unique $V(y)\in M$ with $0\le V(y)\le 1$
such that $\psi(\ovl y)(x)=  \langle \xi , JV(y)J x  \xi  \rangle$ for all $x\in M$.
Note that $  \psi(\ovl 1) =   \phi $ guarantees that $V(1)=1$.
By the uniqueness of $V(y)$ it is easy to extend
$y\mapsto V(y) $ to obtain a unique positive unital linear map $V: M \to M$
such that 
$$\forall x,y \in M \quad \psi(\ovl y)(x)=  \langle \xi , JV(y)J x  \xi  \rangle   .$$
\end{proof}

We will also make  use of the following fundamental property
of self-polar forms due to Woronowicz and Connes.

\begin{thm}\label{wc} Let $s$ be a self-polar form  on a ($\sigma$-finite) von Neumann algebra such that $x\mapsto s(1,x)$ is faithful. 
Any normalized bipositive   form 
 $s'$  on $M \times M$ such that
  $s \le s'$ on the diagonal,
  must coincide with $s$
  on $M \times M$. 
 \end{thm}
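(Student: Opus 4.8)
The plan is to transport everything to the Hilbert space $H_s$ built from the (Hermitian, positive semi-definite) inner product $\langle y,x\rangle:=s(y,x)$, with $\|x\|_s^2=s(x,x)$, and to show that $s'$ can only differ from $s$ through a map $T$ that turns out to be an $s$-isometry on positive elements. Since $s$ and $s'$ are positive definite they are Hermitian, so it is enough to prove $s'(a,x)=s(a,x)$ for every $a\in M_+$ and every $x\in M$; the general identity $s'=s$ then follows by writing an arbitrary $y$ as a complex linear combination of positive elements.

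First I would check that the two induced states coincide. By \eqref{ee2} and normalization, both $x\mapsto s(1,x)$ and $x\mapsto s'(1,x)$ are states, the first being the faithful normal $\varphi$ of the hypothesis. For self-adjoint $h$ and $t\in\RR$ the diagonal comparison $s(1+th,1+th)\le s'(1+th,1+th)$ has vanishing constant term (both equal $1$), so the first-order term must vanish, giving $\operatorname{Re}s(1,h)=\operatorname{Re}s'(1,h)$; as both are real (states are real on self-adjoints) we get $s(1,\cdot)=s'(1,\cdot)=\varphi$.

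Next I would build the comparison map. Fix $a$ with $0\le a\le 1$. By \eqref{ee2} the functional $x\mapsto s'(a,x)$ is positive, and $s'(1-a,\cdot)\ge0$ on $M_+$ gives $0\le s'(a,\cdot)\le\varphi$ there; self-polarity of $s$ then yields $Ta$ with $0\le Ta\le 1$ and $s'(a,x)=s(Ta,x)$ for all $x$. Extending linearly (this is well defined in $H_s$, since the relation pins $Ta$ down modulo an $s$-null element) produces a positive unital map $T\colon M\to M$ with $s'(\cdot,\cdot)=s(T\cdot,\cdot)$. The Hermitian symmetry of $s,s'$ makes $T$ \emph{$s$-self-adjoint}, and $T1=1$ with self-adjointness gives $\varphi\circ T=\varphi$. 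The diagonal hypothesis becomes $s(Ta,a)=s'(a,a)\ge s(a,a)$ for $a\in M_+$. The crux — and the step I expect to be the real obstacle — is the reverse estimate: that \emph{$T$ does not expand the $s$-norm}, i.e. $\|Ta\|_s\le\|a\|_s$ for $a\in M_+$. Here I would set $u_k:=\|T^ka\|_s^2=s(T^{2k}a,a)$ (using $s$-self-adjointness); Cauchy--Schwarz gives $u_k=\langle T^{k+1}a,T^{k-1}a\rangle\le\sqrt{u_{k+1}u_{k-1}}$, so $(u_k)$ is log-convex, hence has non-decreasing ratios. Choosing the self-polar representatives inside the order interval $[0,\|a\|\cdot 1]$ at each step, and using the purely order-theoretic bound $s(b,b)\le\|b\|\,\varphi(b)\le\|b\|^2$ for $b\in M_+$ (which comes from $s(b,\cdot)\le\|b\|\,\varphi$, itself a consequence of \eqref{ee2}), the sequence $(u_k)$ is bounded by $\|a\|^2$. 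A bounded log-convex sequence cannot have $u_1>u_0$ (that would force geometric growth), so $u_1\le u_0$, which is exactly $\|Ta\|_s\le\|a\|_s$.

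Granting the non-expansiveness, the conclusion is immediate: for $a\in M_+$,
\[ \|a\|_s^2=s(a,a)\le s(Ta,a)=s'(a,a)\le \|Ta\|_s\,\|a\|_s\le \|a\|_s^2, \]
so equality holds throughout; in particular $\|Ta\|_s=\|a\|_s$ and $s(Ta,a)=\|Ta\|_s\|a\|_s$. The equality case of Cauchy--Schwarz then forces $s(Ta-a,Ta-a)=0$, whence $s(Ta-a,x)=0$ for all $x$, i.e. $s'(a,x)=s(Ta,x)=s(a,x)$ for every $a\in M_+$. By linearity in the first variable this gives $s'=s$ on $M\times M$. I expect the whole difficulty to be concentrated in the non-expansiveness of $T$: in the tracial case it is just Kadison's inequality $T(a)^2\le T(a^2)$ together with trace invariance, and the log-convexity device above is precisely what replaces that by a modular-free argument valid for an arbitrary (possibly non-normal) bipositive $s'$.
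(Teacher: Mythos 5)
The paper itself contains no proof of Theorem \ref{wc}; it defers entirely to \cite[Cor. 23.20]{P6}, i.e.\ to the classical Woronowicz--Connes uniqueness argument for self-polar forms. Your proof is correct, and it is in essence that classical argument: the implementing map $T$ produced by self-polarity, its $s$-symmetry and unitality, the confinement of the iterates $T^k a$ to the order interval $[0,\|a\|\,1]$ (which gives the uniform bound $s(T^ka,T^ka)\le \|a\|\varphi(T^ka)\le\|a\|^2$ via \eqref{ee2}), the bounded log-convex sequence forcing $\|Ta\|_s\le\|a\|_s$, and then equality in Cauchy--Schwarz. The one delicate point --- which you rightly flag --- is that $s$ is not known a priori to be nondegenerate, so $T$ is only determined up to $s$-null elements and ``$T^ka$'' must mean a chosen representative $a_k\in[0,\|a\|\,1]$ satisfying $s(a_{k+1},x)=s'(a_k,x)$ for all $x$; this is harmless, because every scalar identity in your chain (namely $u_k=s(a_k,a_k)=s'(a_{k-1},a_k)=s(a_{k+1},a_{k-1})\le\sqrt{u_{k+1}u_{k-1}}$ and $u_k\le\|a\|^2$) uses only the defining relation of the chosen representatives together with Hermitian symmetry and bipositivity, hence is independent of the choices. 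Two further minor observations: your first step should be phrased as ``the first-order coefficient of the nonnegative quadratic $t\mapsto s'(1+th,1+th)-s(1+th,1+th)$ vanishes because the quadratic vanishes at $t=0$,'' which is what you clearly intend; and, curiously, your argument never uses faithfulness of $\varphi$, which is consistent with the fact that faithfulness enters the theory through Theorem \ref{C} (existence and strict positive-definiteness of $s_\varphi$) rather than through this uniqueness statement.
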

 See \cite[Cor. 23.20]{P6} for details.

We will use the following consequence of Theorem \ref{wc}.
\begin{lem}\label{wc26} For any finite set  $(x_j)$ in a von Neumann algebra $M$,
we have
\begin{equation}\label{eoo2} 
\|\sum \ovl {x_j} \otimes x_j +    \ovl {x^*_j} \otimes x^*_j \|_{\max}= \sup \sum s(x_j,x_j)+ s(x^*_j,x^*_j), \end{equation}
where the sup runs over all separately normal, normalized, bipositive forms $s$
on $M \times M$, or equivalently over all separately normal
bipositive forms $s$ with $s(1,1)\le 1$.
\end{lem}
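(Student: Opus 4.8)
My plan is to prove the two inequalities separately, in both cases routing everything through the self-polar forms $s_\xi(y,x)=\langle\xi,JyJx\xi\rangle$ furnished by the standard form of $M$ and a unit vector $\xi\in P^\natural$. First I would record the bookkeeping: with $\zeta=\sum_j\ovl{x_j}\otimes x_j$, the element under study is the self-adjoint $z=\zeta+\zeta^*$, and for any sesquilinear form $s$ one has $\sum_j\big(s(x_j,x_j)+s(x_j^*,x_j^*)\big)=\ell_s(z)$, where $\ell_s$ is the linear functional on $\ovl M \otimes M$ with $\ell_s(\ovl y\otimes x)=s(y,x)$. Since every diagonal value of a bipositive form is nonnegative, dividing a form with $s(1,1)\le1$ by $s(1,1)$ only increases $\ell_s(z)$, which disposes of the asserted equivalence between the two suprema. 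Finally, because only finitely many $x_j$ occur I would reduce to the case where $M$ is $\sigma$-finite and carries a faithful normal state, so that Theorems \ref{C} and \ref{wc} apply.

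For $\|z\|_{\max}\ge\sup_s\ell_s(z)$ the decisive point is that $\ell_s(z)$ depends on $s$ only through its diagonal $w\mapsto s(w,w)$. By Theorem \ref{wc} a self-polar form is maximal on the diagonal among normalized bipositive forms; together with the existence, for a prescribed faithful normal state, of a self-polar form realizing it (Theorem \ref{C}), this allows me to replace any separately normal normalized bipositive form by a self-polar one $s=s_\xi$ without decreasing $\ell_s(z)$. For such an $s_\xi$, Remark \ref{cy2} identifies $\ell_{s_\xi}$ with $\langle\xi,\pi(\cdot)\xi\rangle$ for the $*$-homomorphism $\pi:\ovl M \otimes_{\max} M\to B(H)$, $\pi(\ovl y\otimes x)=JyJx$; hence $\ell_{s_\xi}$ is a state and $\ell_{s_\xi}(z)=\langle\xi,\pi(z)\xi\rangle\le\|z\|_{\max}$.

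For the reverse inequality I would use $\|z\|_{\max}=\sup_{\rho,\eta}\langle\eta,\rho(z)\eta\rangle$, the supremum over $*$-representations $\rho$ of $\ovl M \otimes_{\max} M$ (equivalently commuting pairs $(\sigma,\tau)$) and unit vectors $\eta$. Setting $A_j=\sigma(\ovl{x_j})$, $B_j=\tau(x_j)$, commutativity gives $\langle\eta,\rho(z)\eta\rangle=2\,\mathrm{Re}\sum_j\langle A_j^*\eta,B_j\eta\rangle$, and the completion of squares $(A_j\pm B_j^*)(A_j\pm B_j^*)^*\ge0$ bounds this in absolute value by $\langle\eta,\sum_j(\sigma(\ovl{x_jx_j^*})+\tau(x_j^*x_j))\eta\rangle$. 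Granting that the positive part of the spectrum of $\rho(z)$ governs the norm (so that $\sup_{\rho,\eta}\langle\eta,\rho(z)\eta\rangle=\|z\|_{\max}$), it remains to manufacture, from a near-optimal pair $(\rho,\eta)$, a genuinely self-polar (hence separately normal and positive definite) bipositive form $s$ with $\ell_s(z)\ge\langle\eta,\rho(z)\eta\rangle$: passing to the GNS data of the state $\langle\eta,\rho(\cdot)\eta\rangle$ and feeding the resulting positive normal map $\ovl M\to M_*$ into Corollary \ref{rc2} produces standard-form data, and since $\ell_\cdot(z)$ sees only the diagonal, Theorem \ref{wc} ensures no value is lost.

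The main obstacle is precisely this conversion. An arbitrary state on $\ovl M \otimes_{\max} M$ gives a form that is positive on positive arguments (because $\sigma(\ovl a)\tau(b)\ge0$ for $a,b\ge0$) but is in general neither positive definite nor separately normal, whereas the right-hand side quantifies only over forms that are both. Replacing such a form — or an optimizing non-normal representation — by a self-polar form of at least the same diagonal size, and simultaneously justifying that the positive part of $\rho(z)$ dominates, is where the self-dual cone $P^\natural$ and the Woronowicz--Connes maximality theorem carry the argument; this is the step I expect to demand the most care.
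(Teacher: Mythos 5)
The paper itself contains no proof of Lemma \ref{wc26} --- it is quoted from \cite[(23.15) p. 403]{P6} --- so your sketch has to stand on its own, and it has a genuine gap in each direction. In the direction $\sup_s\ell_s(z)\le\|z\|_{\max}$ you invoke Theorem \ref{wc} to ``replace any separately normal normalized bipositive form by a self-polar one without decreasing $\ell_s(z)$'', but Theorem \ref{wc} asserts the \emph{reverse} implication: it is a rigidity statement (any normalized bipositive $s'$ dominating a self-polar $s$ on the diagonal coincides with $s$), i.e.\ self-polar forms are maximal \emph{elements}; it does not produce, for a given bipositive $s$, a self-polar form sitting above it. That existence statement is essentially Woronowicz's self-polarization theorem and is not available in the paper. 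One can reach it with the paper's tools only through real work: writing $s(y,x)=s_\phi(V(y),x)$ with $V$ unital positive via Corollary \ref{rc2} (applied to $\psi(\ovl y)=s(y,\cdot)$, which is where separate normality enters), checking $\phi\circ V=\phi$, and then proving that $V$ contracts the seminorm $a\mapsto s_\phi(a,a)^{1/2}$ --- a fact which holds on \emph{self-adjoint} $a$ only (Kadison-type inequalities, or interpolation between $M$ and $M_*$). This is exactly why the lemma is stated for the symmetrized tensor, which can be rewritten as $\sum_k\ovl{a_k}\otimes a_k$ with all $a_k$ self-adjoint; your argument never uses the symmetrization in this direction, and for non-self-adjoint entries the domination you assert is not available.

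In the direction $\|z\|_{\max}\le\sup_s\ell_s(z)$ you defer precisely the two points that constitute the whole difficulty, and the repair you propose cannot work as stated. First, the identity $\sup_{\rho,\eta}\langle\eta,\rho(z)\eta\rangle=\|z\|_{\max}$ is not a formality; the proof I know passes through Haagerup's theorem that the standard-form representation $\ovl y\otimes x\mapsto JyJx$ is max-norming on such tensors (the paper's \eqref{f6}, i.e.\ \cite[Th. 23.39]{P6}), combined with a Perron--Frobenius argument in the self-dual cone: the image of $z$ there is self-adjoint, commutes with $J$ and preserves $P^\natural$, hence its norm is attained at unit vectors of $P^\natural$, and those vectors are exactly what produce separately normal normalized bipositive forms. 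Second, a (near-)optimal state $f$ on $\ovl M\otimes_{\max}M$ gives the form $s_f(y,x)=f(\ovl y\otimes x)$, which is bipositive but in general \emph{not} positive definite --- already for $a=a^*$ one can have $f(\ovl a\otimes a)<0$, e.g.\ $M=M_2$, $a=e_{11}-e_{22}$, $f$ the vector state at $(e_1\otimes e_2+e_2\otimes e_1)/\sqrt 2$ --- and, worse, $y\mapsto f(\ovl y\otimes\cdot)$ takes values in $M^*$ rather than $M_*$, because the GNS representations of a state on the max tensor product need not be normal. Corollary \ref{rc2} takes normality as a \emph{hypothesis}; it cannot produce it. Forcing normality (via biduals and central cutting) is the technical heart of Haagerup's theorem, so both deferred steps remain unproven: the proposal identifies the right objects (self-polar forms, the representation of Remark \ref{cy2}, states on the max tensor product) but does not amount to a proof.
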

\noindent We refer to \cite[(23.15) p. 403]{P6} for the proof.

\section{Pietsch-type
factorization}

The next statement is a variation on the Pietsch-type
factorization for oh-summing maps described
in \cite{Poh}. We say that an embedding  $j: B\to B(\cl H)$
has infinite multiplicity if $\cl H= H \oplus H\oplus \cdots $
and $j(b)=b \oplus b\oplus \cdots $ ($b\in B$).

\begin{pro}\label{oo2} 
Let $B\subset B(\cl H)$ be an    operator space embedded
with infinite multiplicity and let $(M,\tau)$ be a tracial probability space.
Let us denote by $C_+$ the set of finite rank operators
$h\in B(\cl H)$ such that $h\ge 0$ and $\|h\|_2=1$.
 Let $v: B \to M$ be a  linear map such that
 $v(b^*)=v(b)^*$ for all $b\in B$ (i.e. $v$ is self-adjoint) and
for any finite set $(b_j)$ in $B$ such that
$t=\sum \ovl {b_j} \otimes b_j$ is self-adjoint
we have
\begin{equation}\label{eoo4}
\|\sum \ovl {v(b_j)}\otimes v(b_j) \|_{ \ovl{M }\otimes_{\max} M}
\le \|\sum \ovl {b_j} \otimes b_j \|_{ \ovl{B }\otimes_{\min} B  }.\end{equation}
Then there  is a net  $(h_\a)$  in $C_+$
 such that for any $b\in B$ the following limit exists and we have
\begin{equation}\label{eoo5}\tau(v(b)^*v(b)) \le \lim   \tr( h_\a b^* h_\a b).\end{equation}
\end{pro}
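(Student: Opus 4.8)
The plan is to reduce the assertion to a single scalar ``Pietsch domination'' inequality and then extract the net $(h_\a)$ by a convexity-plus-separation argument. Throughout I would identify $\ovl{\cl H}\otimes \cl H$ with the Hilbert space $S_2$ of Hilbert--Schmidt operators on $\cl H$, so that a simple tensor $\ovl a\otimes c$ acts on $T\in S_2$ by $T\mapsto cTa^*$ and the norm $\|\cdot\|_{\ovl B\otimes_{\min} B}$ becomes the operator norm on $S_2$. Note that $\tr(hb^*hb)=\langle h,\Psi_b(h)\rangle_{S_2}$ with $\Psi_b(T)=b^*Tb$, so $\Phi_b(h):=\tr(hb^*hb)$ is nonnegative and bounded by $\|b\|^2$ on $C_+$, and by cyclicity of the trace $\tr(hb^*hb)=\tr(hbhb^*)$.

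First I would record the basic inequality. Fix a finite family $(b_j)$ in $B$ and form the self-adjoint tensor $\Phi:=\sum_j(\ovl{b_j}\otimes b_j+\ovl{b_j^*}\otimes b_j^*)$. Applying Remark \ref{cy2} to the family $(v(b_j))\cup(v(b_j)^*)$ and then the hypothesis \eqref{eoo4} (using $v(b^*)=v(b)^*$ to match the two sides) gives
\[2\sum_j\tau(v(b_j)^*v(b_j))\le\Big\|\sum_j\big(\ovl{v(b_j)}\otimes v(b_j)+\ovl{v(b_j)^*}\otimes v(b_j)^*\big)\Big\|_{\max}\le\|\Phi\|_{\min}.\]
Now $\Phi$, viewed on $S_2$, is the self-adjoint map $T\mapsto\sum_j(b_jTb_j^*+b_j^*Tb_j)$; it is completely positive, hence preserves the self-dual cone $S_2^+$ of positive Hilbert--Schmidt operators --- precisely the positive cone $P^\natural$ of the standard form of $B(\cl H)$ on $S_2$ (cf. \S\ref{stan}). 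The crucial point is that the operator norm of a self-adjoint positivity-preserving map on a Hilbert space with a self-dual cone is attained on that cone (a Perron--Frobenius phenomenon), so $\|\Phi\|_{\min}=\sup_{h\in C_+}\langle h,\Phi(h)\rangle$. Since $\langle h,\Phi(h)\rangle=2\sum_j\tr(hb_j^*hb_j)$ by the cyclicity identity, this yields the domination
\[(\star)\qquad \sum_j\tau(v(b_j)^*v(b_j))\le\sup_{h\in C_+}\sum_j\tr(hb_j^*hb_j),\]
valid for every finite family $(b_j)$, and (rescaling $b_j\mapsto\sqrt{\lambda_j}\,b_j$, using linearity of $v$) for arbitrary nonnegative weights $\lambda_j$ as well.

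Next I would extract the net. Consider the map $C_+\to\prod_{b\in B}[0,\|b\|^2]$, $h\mapsto(\Phi_b(h))_b$, and let $\mathcal V$ be its range. Here the hypothesis of infinite multiplicity is decisive: writing $\cl H=\cl H_1\oplus\cl H_2$ with each $\cl H_i\cong\cl H$ and $b$ acting diagonally, the element $h'=\sqrt{t}\,h_1\oplus\sqrt{1-t}\,h_2$ lies in $C_+$ and satisfies $\Phi_b(h')=t\,\Phi_b(h_1)+(1-t)\,\Phi_b(h_2)$ for every $b$; hence $\mathcal V$ is convex and its closure $\ovl{\mathcal V}$ is convex and compact. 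It now suffices to produce a point $(p_b)\in\ovl{\mathcal V}$ with $p_b\ge\tau(v(b)^*v(b))$ for all $b$, since a defining net $(h_\a)$ of that point is exactly the one required in \eqref{eoo5}. If no such point existed, $\ovl{\mathcal V}$ would be disjoint from the closed convex upward-closed set $U=\{(x_b):x_b\ge\tau(v(b)^*v(b))\ \forall b\}$; strict separation in $\RR^B$ would then furnish finitely supported weights $(\lambda_b)$, necessarily $\ge0$ since $U$ is upward closed, with $\sup_{h\in C_+}\sum_b\lambda_b\Phi_b(h)<\sum_b\lambda_b\tau(v(b)^*v(b))$, contradicting $(\star)$.

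I expect the main obstacle to be the middle step: the passage from $\|\Phi\|_{\min}$ --- a priori a supremum over all Hilbert--Schmidt $T$ --- to a supremum over the positive finite-rank $h\in C_+$. This is where the self-duality of the cone $S_2^+$ (equivalently, that $S_2$ carries the standard form of $B(\cl H)$) enters essentially; one must verify that for a self-adjoint cone-preserving map the top of the spectrum dominates the bottom in absolute value, so that indeed $\|\Phi\|=\sup_{\|h\|_2=1,\,h\ge0}\langle h,\Phi(h)\rangle$. The remaining ingredients --- the convexity of $\mathcal V$ coming from infinite multiplicity and the Hahn--Banach separation --- are then routine.
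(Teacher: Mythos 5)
Your proof is correct and follows essentially the same route as the paper's: the same symmetrization of the family by its adjoints (using $v(b^*)=v(b)^*$ and traciality of $\tau$), then Remark \ref{cy2} plus hypothesis \eqref{eoo4}, then the identity $\tfrac12\|\sum \ovl{b_j}\otimes b_j+\ovl{b_j^*}\otimes b_j^*\|_{\min}=\sup_{h\in C_+}\sum \tr(h b_j^* h b_j)$, and finally a Hahn--Banach separation combined with infinite multiplicity to extract the net --- the paper simply cites \cite[Lemma 22.22, Lemma A16 and Prop. 4.23]{P6} for the norm identity and for the extraction step, where you instead argue inline (your convexification of the image set via block-diagonal $\sqrt{t}\,h_1\oplus\sqrt{1-t}\,h_2$ is exactly how the cited Prop. 4.23 upgrades finitely supported probability measures to single elements of $C_+$). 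The one step you flagged as needing verification does go through: $\Phi$ preserves the self-adjoint part of $S_2$, so $\langle T,\Phi T\rangle=\langle A,\Phi A\rangle+\langle B,\Phi B\rangle$ for $T=A+iB$, and for self-adjoint $A=A_+-A_-$ the cross term $\langle A_+,\Phi A_-\rangle$ is $\ge 0$ by cone-preservation and self-duality of $S_2^+$, whence $|\langle A,\Phi A\rangle|\le\langle |A|,\Phi |A|\rangle$ with $\| |A| \|_2=\|A\|_2$; together these give $\|\Phi\|=\sup\{\langle h,\Phi h\rangle : h\ge 0,\ \|h\|_2=1\}$, and by density of finite rank positives this sup equals the sup over $C_+$.
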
 
\begin{proof} 
Since $v$ is self-adjoint and $\tau$ tracial, we have $\tau(v(b)^*v(b))=\tau(v(b^*)^*v(b^*))$. Then for any self-adjoint $t=\sum \ovl {b_j} \otimes b_j$   
$$\sum \tau(v(b_j)^*v(b_j)) \le (1/2)\| \sum \ovl {v(b_j)}\otimes v(b_j)
+ \ovl {v(b^*_j)}\otimes v(b^*_j)\|_{\max}
\le (1/2)\| \sum \ovl { b_j}\otimes  b_j
+ \ovl { b^*_j}\otimes b^*_j\|_{\min}$$
$$=  \sup_{h\in C_+}  (1/2)\sum  \tr( h  b_j^* h  b_j) +\tr( h  b_j h  b_j^*)
= \sup_{h\in C_+}\sum  \tr( h  b_j^* h  b_j) $$
where for the next to last equality we refer to   \cite[Lemma 22.22]{P6}
using the fact that $\|t\|_{\min}= \|t\|_{\max} $ for any $t=\sum \ovl {b_j} \otimes b_j
\in \ovl{B(\cl H)} \otimes B(\cl H)$. We expand on the latter fact in Remark \ref{roo1}.\\
By a well known variant of Hahn-Banach (see e.g. \cite[Lemma A16]{P6})
there is a net $(\mu_\a)$
of finitely supported probability measures on
$C_+$ such that for any $b\in B$ 
the following limit exist and we have
$$\tau(v(b)^*v(b)) \le \lim \int    \tr( h  b^* h  b) d\mu_\a(h) .$$
Then using the fact that $B$ is represented on $\cl H$ with infinite multiplicity
one can obtain  a net for which \eqref{eoo5} holds (see e.g. \cite[Prop. 4.23]{P6} for details).
\end{proof}
\begin{rem}\label{bla3} Let $B,M$ be $C^*$-algebras
and let $v: B \to M$ be a c.b. (resp. c.p.) map.
Then, for any $C^*$-algebra $A$, the map $Id_A \otimes v$ extends
to a bounded map from $A \otimes_{\min} B$ 
to $A \otimes_{\min} M$ (resp. from $A \otimes_{\max} B$ 
to $A \otimes_{\max} M$ with norm at most $\|v\|_{cb}$ (resp. $\|v\|$).
See e.g. \cite[\S 7]{P6}.
It follows that the map $\ovl v \otimes v$ extends
to a bounded map from $\ovl B \otimes_{\min} B$ 
to $\ovl M \otimes_{\min} M$ (resp. from $\ovl B \otimes_{\max} B$ 
to $\ovl M \otimes_{\max} M$ with norm at most $\|v\|^2_{cb}$ (resp. $\|v\|^2$).
\end{rem}

The next fact, which plays an important role in our paper, seems to have passed unnoticed.
\begin{lem}\label{28} Let $v: B \to D$ be a   positive map between   $C^*$-algebras. Then for any self-adjoint 
tensor $t=\sum \ovl {x_j} \otimes x_j\in \ovl B \otimes B$, we have
\begin{equation}\label{eoo3}\|\sum \ovl {v(x_j)}\otimes v(x_j) \|_{ \ovl{D }\otimes_{\max} D}
\le \|v\|^2\|\sum \ovl {x_j} \otimes x_j \|_{ \ovl{B }\otimes_{\max} B  }.\end{equation}
\end{lem}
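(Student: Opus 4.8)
The plan is to reduce the bound, for a merely positive $v$, to the bipositive-form description of the maximal norm on symmetric tensors in Lemma \ref{wc26}, exploiting the one feature that positive (as opposed to completely positive) maps retain: they pull back bipositive forms to bipositive forms. First I would record two elementary reductions. Since $v$ is positive it is self-adjoint, so $v(x_j^*)=v(x_j)^*$; consequently, applying $\ovl v\otimes v$ to the identity $t=t^*$ (i.e. $\sum\ovl{x_j}\otimes x_j=\sum\ovl{x_j^*}\otimes x_j^*$) shows that $\sum\ovl{v(x_j)}\otimes v(x_j)$ is again self-adjoint. Each of these self-adjoint tensors therefore equals half of its symmetrization, so that
$$\Big\|\sum\ovl{v(x_j)}\otimes v(x_j)\Big\|_{\max}=\tfrac12\Big\|\sum\ovl{v(x_j)}\otimes v(x_j)+\ovl{v(x_j)^*}\otimes v(x_j)^*\Big\|_{\max},$$
and likewise for $t$ itself. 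This converts \eqref{eoo3} into a comparison of two symmetrized expressions of exactly the shape handled by Lemma \ref{wc26}.

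Next I would apply Lemma \ref{wc26}, at the level of the biduals $B^{**},D^{**}$ in standard form, where the separately normal bipositive forms compute precisely the maximal $C^*$-norm carried by $\ovl B\otimes_{\max}B$, resp. $\ovl D\otimes_{\max}D$, since every commuting pair of representations of $\ovl B,B$ extends to a normal one of the biduals. This gives
$$\Big\|\sum\ovl{v(x_j)}\otimes v(x_j)+\ovl{v(x_j)^*}\otimes v(x_j)^*\Big\|_{\max}=\sup_{s'}\sum s'\big(v(x_j),v(x_j)\big)+s'\big(v(x_j)^*,v(x_j)^*\big),$$
the supremum running over separately normal bipositive forms $s'$ on $D^{**}$ with $s'(1,1)\le1$. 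The key step is then to pull back such an $s'$ through the normal positive map $v^{**}\colon B^{**}\to D^{**}$: setting $s(a,b)=s'(v^{**}(a),v^{**}(b))$ yields a separately normal form that is again bipositive, because $v^{**}$ is normal and positive (so $v^{**}(a),v^{**}(b)\ge0$ whenever $a,b\ge0$). Since $v(x_j^*)=v(x_j)^*$, the sum over $j$ is unchanged: $\sum s'(v(x_j),v(x_j))+s'(v(x_j)^*,v(x_j)^*)=\sum s(x_j,x_j)+s(x_j^*,x_j^*)$.

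The remaining point, and the one that substitutes for complete positivity, is to control the normalization $s(1,1)=s'(v^{**}(1),v^{**}(1))$, since $s$ need not satisfy $s(1,1)\le1$. Here I would use that $a:=v^{**}(1)\ge0$ with $\|a\|=\|v^{**}(1)\|=\|v\|$, together with the monotonicity built into bipositivity: from $0\le a\le\|a\|1$ and \eqref{ee2} applied to the pairs $(\|a\|1-a,a)$ and $(1,\|a\|1-a)$ one gets $s'(a,a)\le\|a\|s'(1,a)\le\|a\|^2s'(1,1)$, whence $s(1,1)\le\|v\|^2s'(1,1)\le\|v\|^2$. Rescaling $s$ by $s(1,1)$ (the degenerate case $s(1,1)=0$ forcing $s\equiv0$ by Cauchy--Schwarz) and taking the supremum over $s'$ then yields that the maximal norm of the $v(x_j)$-symmetrization is at most $\|v\|^2$ times that of the $x_j$-symmetrization; halving both sides gives \eqref{eoo3}. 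I expect the main obstacle to be the bidual/normal-form bookkeeping needed to invoke Lemma \ref{wc26} honestly at the $C^*$-level, namely checking that the passage to $B^{**},D^{**}$ does not enlarge the relevant norms on these symmetric tensors; the pullback of bipositivity and the normalization estimate are then short.
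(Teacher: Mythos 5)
Your proof is correct and is essentially the paper's own argument: pass to the biduals, pull back separately normal bipositive forms through the normal positive map $v^{**}$, bound $s'(v^{**}(1),v^{**}(1))\le \|v\|^2 s'(1,1)$ by bipositivity, and conclude via Lemma \ref{wc26} (with homogeneity) together with the canonical isometric embedding $\ovl{D}\otimes_{\max}D\subset \ovl{D^{**}}\otimes_{\max}D^{**}$. Your write-up simply makes explicit some details the paper leaves implicit, namely the symmetrization step for self-adjoint tensors, the monotonicity computation behind the normalization bound, and the commuting-representations justification of the bidual embedding.
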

\begin{proof} 
The map $v^{**}: B^{**} \to D^{**}$ is a normal   positive map.
We have $1\le v^{**}(1) \le \|v^{**}\| 1=\|v \| 1$.
Therefore for any separately normal,   bipositive
form $s: D^{**} \times D^{**}$ the composition
$s( v^{**}, v^{**})$ is a form with  the same properties.
By bipositivity we have $s( v^{**}(1), v^{**}(1)) \le s(1,1) \|v\|^2$.
By \eqref{eoo2} (and by homomogeneity) for any self-adjoint 
tensor $t=\sum \ovl {x_j} \otimes x_j\in \ovl {B^{**}} \otimes B^{**}$   we have
$$\|\sum \ovl {v^{**}(x_j)} \otimes {v^{**}(x_j)}  \|_{ \ovl{D^{**}}\otimes_{\max} D^{**} }
\le \|v\|^2
\|\sum \ovl {x_j} \otimes x_j   \|_{ \ovl{B^{**}}\otimes_{\max} B^{**} }.$$
But since we have a canonical isometric embedding
${ \ovl{D }\otimes_{\max} D }\subset { \ovl{D^{**}}\otimes_{\max} D^{**} } $
(and similarly for $B$) 
we obtain \eqref{eoo3}.
\end{proof}

\begin{rem}\label{roo1}
When $B$ has the WEP, in particular when $B=B(H)$,
for any $(b_j)$ in $B$ 
we have
\begin{equation}\label{eb}
\| \sum \ovl { b_j}\otimes  b_j
 \|_{\min}=\| \sum \ovl { b_j}\otimes  b_j
 \|_{\max}.\end{equation}
 See \cite[Cor. 22.16]{P6} for a detailed proof.
Thus when $B=B(H)$ (or when $B$ has the WEP)
 \eqref{eoo3} becomes for any self-adjoint $\sum \ovl {x_j} \otimes x_j\in \ovl B \otimes B$
  \begin{equation}\label{eoo3'}
  \|\sum \ovl {v(x_j)}\otimes v(x_j) \|_{ \ovl{D }\otimes_{\max} D}
\le \|v\|^2\|\sum \ovl {x_j} \otimes x_j \|_{ \ovl{B }\otimes_{\min} B  }.\end{equation}
Moreover, if $v$ is c.p. with $\|v\|\le 1$
then \eqref{eoo3'} holds for any   $(x_j)$ in $B$
by Remark \ref{bla3}.
\end{rem}

\begin{rem}\label{ch} Let $R$ and $C$ denote as usual the row and column
operator spaces. It is known (see \cite[Rem. 22.20]{P6}) that if $v: B \to D$
is such that $\|Id_X \otimes v : X \otimes_{\min} B \to X \otimes_{\min} D\|\le 1$ both for $X=R$ and $X=C$ then $v$ satisfies \eqref{eoo3}.
 Therefore, by Choi's generalization of Kadison's inequality (\cite[Cor. 2.8]{Choi}), any $2$-positive contraction satisfies \eqref{eoo3}.
 Given this, the preceding lemma is not so surprising.
In particular, any complete contraction $v$ satisfies \eqref{eoo3}.
See \cite[Cor. 22.19]{P6} for a detailed proof.
\end{rem}
\begin{rem}\label{rc} 
Let $u,v$  be such that $Id_M=vu$ (as in \eqref{e1}).
Assume that there are constants $c_u$ and $c_v$
such that for any $n$ and  $(x_j)_{1\le j\le n}$ in $M$
and any $(b_j)_{1\le j\le n}$ in $B(H)$ with 
$\sum \ovl {x_j} \otimes x_j$ and $\sum \ovl {b_j} \otimes b_j$ self-adjoint,
we have
\begin{equation}\label{eoo4}\|\sum \ovl {u(x_j)}\otimes u (x_j) \|_{ \ovl{B(H) }\otimes_{\min} B(H)}
\le c_u\|\sum \ovl {x_j} \otimes x_j \|_{ \ovl{M }\otimes_{\min} M }.\end{equation}

\begin{equation}\label{eoo5}\|\sum \ovl {v(b_j)}\otimes v(b_j) \|_{ \ovl{M }\otimes_{\max} M}
\le c_v\|\sum \ovl {b_j} \otimes b_j \|_{ \ovl{B(H) }\otimes_{\max} B(H)  }.\end{equation}
Then $M$ is injective.
Indeed,
by \eqref{eb}
we have
$$\|\sum \ovl {x_j} \otimes x_j \|_{ \ovl{M }\otimes_{\max} M }
\le c_uc_v\|\sum \ovl {x_j} \otimes x_j \|_{ \ovl{M }\otimes_{\min} M } ,$$
which  by a theorem due to Haagerup (see \cite[Th. 23.7]{P6})
implies that $M$ has the WEP and hence (being a von Neumann algebra) that    $M$ is   injective.\\
In particular 
this shows that if
$Id_M=vu$ with
 $u$  c.b. and $v$ c.p.  
then
\eqref{eoo4}  and \eqref{eoo5} hold (see Remark \ref{bla3}) and hence
 $M$ is injective. Actually, much less suffices, as we now explain.\\
Let $X$ be an operator space, for example 
$R$ or $C$.
Let us say that $v: B \to D$ is $X$-bounded if 
$\|Id_X \otimes v : X \otimes_{\min} B \to X \otimes_{\min} D\|<\infty$.
It is known (see \cite[p. 375, Rem. 22.20]{P6})
that if $v$ is $X$-bounded both for  $X=R$ and $X=C$,  or equivalently
for $X=R\oplus C$,
then $v$ satisfies \eqref{eoo5} (the contractive case is discussed in Remark \ref{ch}).\\
As for \eqref{eoo4}, assuming
$M\subset B(\cl H)$ completely isometrically, it suffices for the same reason that $u$ admits an extension 
$\tilde u: B(\cl H)\to B(H) $ that is  $X$-bounded both for  $X=R$ and $X=C$.
Actually, it suffices that $\tilde u$ be $X$-bounded when $X$ is the operator Hilbert space $OH$
(see \cite[p. 122]{P4}).

This leads to what seems to be an interesting example.
Let  $M$ be a \emph{non injective} von Neumann algebra.
Let   $u,v$ be maps 
 such that  $Id_M= vu$ (as in \eqref{e1}). Assume that $u: M \to B(H)$ is   2-positive  
and    that
$v$ satisfies \eqref{eoo5} (as is the case for e.g. $M=L(\F_\infty)$).
Then    $u: M \to B(H)$ is an example of a map
that is $X$-bounded for $X=R\oplus C$ but \emph{does not extend}
to a similarly bounded map $\tilde u: B(\cl H)\to B(H) $.
\end{rem}

 \section{Proof of main result}
 
 We start by some simple preliminary elementary propositions.
 
 \begin{pro}\label{fac} Let $T: M \to N$ be a linear map between von Neumann algebras.
 Assume that there is a net of pairs of positive contractions $u_\a: M \to B(H_\a)$, 
 $v_\a: B(H_\a) \to N$, with $u_\a$ normal, such that $v_\a u_\a \to T$ pointwise-weak*. 
 Then there is (for some $H$) a pair of positive contractions $u : M \to B(H )$, 
 $v : B(H ) \to N$,  with $u$ normal,  such that $vu=T$.
 The construction is such that  if the $u_\a$'s and $v_\a$'s are all unital then so are $u,v$, and if 
 the $v_\a$'s are all c.p. then so is $v$.
 \end{pro}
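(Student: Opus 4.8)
The plan is to realize $u$ as a block-diagonal direct sum of the $u_\a$ and to recover $v$ by compressing back to the individual blocks and taking an ultralimit engineered so that $vu=T$ holds exactly, not merely approximately. First I would fix an ultrafilter $\cl U$ on the directed index set that refines the order (tail) filter, so that ultralimits along $\cl U$ agree with net limits whenever the latter exist. Set $H=\bigoplus_\a H_\a$ (the Hilbertian direct sum), let $V_\a:H_\a\to H$ be the canonical inclusion isometry, and define $u:M\to B(H)$ by $u(x)=\bigoplus_\a u_\a(x)$. Since $\|u(x)\|=\sup_\a\|u_\a(x)\|\le\|x\|$, this is a well-defined positive contraction, and it is unital whenever all the $u_\a$ are, because $u(1)=\bigoplus_\a 1_{B(H_\a)}=1_{B(H)}$.

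For the normality of $u$ I would test against a rank-one functional $\omega=|\xi\rangle\langle\eta|\in B(H)_*$: writing $\xi=(\xi_\a)$ and $\eta=(\eta_\a)$, one has $\langle u(\cdot),\omega\rangle=\sum_\a\langle u_\a(\cdot),|\xi_\a\rangle\langle\eta_\a|\rangle$, a series of elements of $M_*$ (each by normality of $u_\a$) whose norms are summable by Cauchy--Schwarz, since $\sum_\a\|\xi_\a\|\,\|\eta_\a\|\le\|\xi\|\,\|\eta\|<\infty$. Hence the sum lies in $M_*$, and by density of the rank-one functionals together with uniform boundedness, $x\mapsto\langle u(x),\omega\rangle\in M_*$ for every $\omega\in B(H)_*$; thus $u$ is normal.

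Next I would define $v:B(H)\to N$ by $v(b)=\lim_{\cl U}v_\a(V_\a^* b V_\a)$, the limit taken in the weak* topology of $N$. Each $v_\a$ is a contraction and each compression $b\mapsto V_\a^* b V_\a\in B(H_\a)$ is contractive, so the net $(v_\a(V_\a^* bV_\a))_\a$ lies in the weak*-compact ball of radius $\|b\|$ in $N$ and the ultralimit exists; linearity of $v$ is immediate from linearity of compression, of the $v_\a$, and of the ultralimit. Positivity, contractivity, and (when all $v_\a$ are unital) unitality are inherited, because compression and each $v_\a$ preserve these properties and weak* limits of positive elements are positive. For complete positivity I would use that compression $b\mapsto V_\a^* b V_\a$ is completely positive: if $[b_{ij}]\ge0$ in $M_n(B(H))$ then $[V_\a^* b_{ij}V_\a]\ge0$, hence $[v_\a(V_\a^* b_{ij}V_\a)]\ge0$ when $v_\a$ is c.p., and taking the weak* ultralimit in $M_n(N)$ keeps positivity, so $[v(b_{ij})]\ge0$.

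Finally, the identity $vu=T$ is built into the construction: for $x\in M$ the operator $u(x)$ is block-diagonal, so $V_\a^* u(x) V_\a=u_\a(x)$ and therefore $v(u(x))=\lim_{\cl U}v_\a u_\a(x)$; since $v_\a u_\a(x)\to T(x)$ weak* along the net and $\cl U$ refines the tail filter, this ultralimit equals $T(x)$. The delicate point, and the reason for the specific design, is the interplay between normality of $u$ and exactness of $vu=T$: an ultraproduct of the $H_\a$ would make $v$ easy to define but would destroy normality of $u$, whereas the $\ell_2$-direct sum keeps $u$ normal, at the cost of having to reconstruct $v$ by compression and a net-compatible ultralimit, which is precisely what forces $vu=T$ to hold on the nose rather than only in the limit.
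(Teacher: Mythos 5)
Your proof is correct and follows essentially the same route as the paper: the paper also takes $u(x)=\oplus_\a u_\a(x)$, defines $v$ as a weak* ultralimit of $v_\a$ applied to the $\a$-th coordinate, and then passes from the block-diagonal algebra $\oplus_\a B(H_\a)$ to $B(H)$ with $H=\oplus_\a H_\a$ via the normal u.c.p. compression---which is exactly your map $b\mapsto V_\a^* b V_\a$, so you have merely merged that last factorization step into the definition of $v$. The details you spell out (normality of the direct sum $u$ via rank-one functionals, positivity and complete positivity surviving the weak* ultralimit, and the exactness of $vu=T$ from the ultrafilter refining the net) are precisely what the paper summarizes as ``all the announced properties hold.''
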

  \begin{proof}
  Let $B=\oplus_\a B(H_\a)$. Let $u: M \to B$ be defined by $u(x)=\oplus_\a u_\a(x)$ ($x\in M$). Let $p_\a: B \to B(H_\a)$ be the $\a$-th coordinate mapping.
  We define $v: B \to N$ by setting
  $v(b)=\text{weak*}\lim_\cl U v_\a p_\a(b)= \text{weak*}\lim_\cl U v_\a (b_\a)$ for any $b=(b_\a)\in B$,
  where $\cl U$ is an ultrafilter refining the net. Clearly, $vu=T$
  and all the announced properties hold.
Lastly, as   the identity of $B$ factors
 through $B(H)$ with $H=\oplus_\a H_\a$ via unital c.p. normal maps,
 we may replace $B$ by $B(H)$.
  \end{proof}
The next lemma is elementary.
 \begin{lem}\label{L2} 
 Let $u: M \to B(H)$ and $v:B(H) \to M$
 be positive maps (resp. with $u$ normal). Let $\t=vu:M \to M$. 
 Let $ \vp
 >0$ and $0\le \d <1$.
 If $\|\t(1)-1\|\le \d$, there are unital positive maps $u' : M \to B(H)$ and $v' :B(H) \to M$
 (resp. with $u' $ normal) such that 
 $$\|\t-v'  u' \| \le    (2^{1/2}+1)(\d+\vp\|v\|)^{1/2}  +\vp\|v\|.$$
 Moreover, if in addition $v$ is c.p. we obtain $v'$ also c.p.
\end{lem}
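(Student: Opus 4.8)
The plan is to renormalize $u$ and $v$ in a \emph{coupled} way. The point is that the hypothesis controls only the composite $\theta(1)=v(u(1))$ and says nothing about $u(1)$ or $v(1)$ individually, so trying to make $u$ and $v$ unital separately would in general cost an amount unrelated to $\delta$. First I would fix a normal state $\sigma$ on $M$, set $a=u(1)\ge 0$ and $a_\vp=a+\vp 1$ (invertible, with $a_\vp\ge\vp 1$), and define $\tilde u(x)=u(x)+\vp\sigma(x)1$ together with $u'(x)=a_\vp^{-1/2}\tilde u(x)a_\vp^{-1/2}$. Then $u'$ is positive, normal (as $u,\sigma$ are normal and conjugation by a fixed element is weak*-continuous), and unital since $\tilde u(1)=a_\vp$; this will be the required $u'$.

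Next I would absorb the renormalizing factor into the range map with \emph{no} error. Define $w\colon B(H)\to M$ by $w(y)=v(a_\vp^{1/2}y\,a_\vp^{1/2})$, which is positive, and completely positive when $v$ is, being the composition of $v$ with a conjugation. Since $a_\vp^{1/2}u'(x)a_\vp^{1/2}=\tilde u(x)$, the transfer identity $wu'=v\tilde u$ holds exactly, so $wu'(x)=\theta(x)+\vp\sigma(x)v(1)=:\Phi(x)$, whence $\|\Phi-\theta\|\le\vp\|v(1)\|=\vp\|v\|$ (using $\|v\|=\|v(1)\|$ for positive $v$). The only remaining defect is $c:=w(1)=\theta(1)+\vp v(1)$, for which $\|c-1\|\le\delta+\vp\|v\|=:\delta'$. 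We may assume $\delta'<1$ (the asserted bound is nondecreasing in $\vp$, and $\delta<1$), so that $c$ is invertible with $\mathrm{sp}(c)\subset[1-\delta',1+\delta']$. I would then set $v'(y)=c^{-1/2}w(y)c^{-1/2}$, which is positive, unital ($v'(1)=1$), and c.p. when $w$, hence $v$, is.

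For the final estimate I would write the conjugation in the opposite direction, namely $\Phi=c^{1/2}(v'u')c^{1/2}$. Since $v'u'$ is unital and positive, it is a contraction, so for $\|x\|\le 1$ the element $g=v'u'(x)$ satisfies $\|g\|\le 1$ and
$$\|\Phi(x)-v'u'(x)\|=\|c^{1/2}g\,c^{1/2}-g\|\le\|c^{1/2}-1\|\,(\|c^{1/2}\|+1).$$
Spectral calculus then gives $\|c^{1/2}-1\|\le(\delta')^{1/2}$ and $\|c^{1/2}\|\le(1+\delta')^{1/2}\le 2^{1/2}$, so $\|\Phi-v'u'\|\le(2^{1/2}+1)(\delta')^{1/2}$. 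Combined with $\|\Phi-\theta\|\le\vp\|v\|$ this yields $\|\theta-v'u'\|\le(2^{1/2}+1)(\delta+\vp\|v\|)^{1/2}+\vp\|v\|$, as claimed, and $v'$ is c.p. whenever $v$ is.

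The main obstacle, and the step I would treat most carefully, is precisely this coupled renormalization: one must split the factor $a_\vp^{\pm1/2}$ so that $wu'=v\tilde u$ holds on the nose, thereby converting the absence of any bound on $u(1)$ and $v(1)$ into the single controllable quantity $\|w(1)-1\|\le\delta'$. The clean constant $2^{1/2}+1$ then hinges on measuring the last conjugation through $c^{1/2}$ (so that the operator inequality $\|c^{1/2}-1\|\le\|c-1\|^{1/2}$ supplies the square root) rather than through $c^{-1/2}$, together with the fact that $v'u'$ is contractive; conjugating in the $c^{-1/2}$ direction instead would force one to control $\|c^{-1/2}-1\|$, which degenerates as $\delta'\to 1$.
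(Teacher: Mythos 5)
Your proof is correct and follows essentially the same route as the paper's: your perturbation $\tilde u(x)=u(x)+\vp\sigma(x)1$, the normalization $u'=a_\vp^{-1/2}\tilde u(\cdot)\,a_\vp^{-1/2}$, the absorbing map $w=v(a_\vp^{1/2}\cdot a_\vp^{1/2})$, and the renormalization $v'=c^{-1/2}w(\cdot)c^{-1/2}$ with the two-sided splitting of $c^{1/2}gc^{1/2}-g$ are exactly the paper's $u_\vp$, $u'$, $v_\vp$, $v'$ and its final estimate, yielding the same constant $2^{1/2}+1$. The only cosmetic difference is that you make explicit the reduction to $\delta+\vp\|v\|<1$ (via monotonicity of the bound in $\vp$), which the paper leaves implicit in the step $\|\eta^{1/2}\|\le(1+\delta+\vp\|v\|)^{1/2}\le 2^{1/2}$.
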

 \begin{proof} 
Let $f\in M_*$ be a   normal state. Then   let
$u_\vp(x)= u(x) + \vp f(x) 1_{B(H)} $ and let $u '(x)= u_\vp(1)^{-1/2} u_\vp(x)u_\vp(1)^{-1/2}$,
and $v_\vp(b) = v(u_\vp(1)^{1/2} b u_\vp(1)^{1/2} ) $ for $x\in M$ and $b\in B(H)$.
Note \begin{equation}\label{f13}
v_\vp u' = vu_\vp=\t + \vp f(\cdot) v(1) .\end{equation}
Then $u' : M \to B(H) $ is unital positive and normal, $v_\vp$ is positive
and 
$v_\vp(1) = \t(1_M)+\vp v(1_{B(H)})  $,
the latter being invertible
since $v_\vp(1) \ge \t(1_M)$ and $\d <1$.
Let $\eta=v_\vp(1) $.
Let $v'(b)=  \eta^{-1/2} v_\vp(b) \eta^{-1/2}$,
which is unital and positive. Equivalently
we have $v_\vp(b)=\eta^{1/2} v'(b) \eta^{1/2}$, and hence by \eqref{f13}
$$\t=   \eta^{1/2} v' u' \eta^{1/2} - \vp f(\cdot) v(1).$$
This implies (we write $ \eta^{1/2} v' u' \eta^{1/2} -v' u' = (\eta^{1/2} -1)v' u' \eta^{1/2}+   v' u' (\eta^{1/2} -1)$)
$$\|\t- v' u'\|\le   \|\eta^{1/2}-1\| \|\eta^{1/2}\|+\|\eta^{1/2}-1\| + \vp \|v(1)\|.  $$
Since $1-\d\le \eta(1) \le 1+\d +\vp \|v(1)\|$, we have
$\|\eta^{1/2}-1\|\le (\d +\vp \|v\|)^{1/2}$ and $\|\eta^{1/2}\| \le (1+\d +\vp \|v \|)^{1/2} \le 2^{1/2}$.
The announced bound follows.
\end{proof}

\begin{rem}\label{fac1} Let $M$ be a von Neumann algebra.
 Assume that we have a net of pairs of maps $u_\vp: M \to B(H)$, $v_\vp: B(H) \to M$
 (here for convenience $\vp$ is used for the index of the net)
 such that the composition $\theta_\vp=v_\vp u_\vp $ 
 satisfies
    $\|\theta_\vp(1) - 1\|_M  \to 0$ and 
    $ \theta_\vp(x) \to  x $ weak*   for any $x\in M$. \\
    Note that this holds in particular if $v_\vp u_\vp =Id_M$ independently of $\vp$.\\
Assume moreover that    $u_\vp$, $u_\vp$ are both positive, that  $\sup_\vp\|v_\vp\|<\infty$   
 and that $u_\vp$ is normal.
Then there is a factorization  $Id_M=vu$ as in \eqref{e1}
with $u$ normal and $u,v$ both \emph{unital} positive.
Indeed, this follows from Lemma \ref{L2} and Proposition \ref{fac}.\\
In particular, having $Id_M=v'u'$ as in \eqref{e1} with $u', v'$ merely positive and $u'$ normal
implies the existence of  $Id_M=vu$ as in \eqref{e1}
with $u$ normal and $u,v$ both \emph{unital} positive.\\
Furthermore we will show at the end of the proof of Theorem \ref{ot1} that the latter factorization
implies that $Id_M$  admits the seemingly injective factorization of Definition \ref{d1}.
\end{rem}

 As before, for any map $u:A \to M$ from a $C^*$-algebra to a von Neumann one
we denote by $\ddot u: A^{**} \to M$ the canonical normal
extension of $u$ to $ A^{**}$, so that $\ddot u i_A=u$.

We next state an easy fact  that will help us to relate our main topic
to lifting problems.
 \begin{lem}\label{23/10} If a 
von Neumann  algebra $M$ is QWEP, there is a  unital 
$C^*$-algebra $W$ and a surjective   unital $*$-homomorphism
$q: W \to M$ that admits  for some $H$  a factorization of the form
$$ q   : W  {\buildrel w_1\over\longrightarrow} B(H) {\buildrel w_2\over\longrightarrow} M  $$
where $w_1$ is a   unital $*$-homomorphism and $w_2$ is
a   unital  c.p. map (and hence $\|w_2\|_{cb} = 1$).
\end{lem}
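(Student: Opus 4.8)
The plan is to read off $W$ from the definition of QWEP and then to recast the weak expectation property in its ``representation'' form, so that the first leg of the factorization is an honest $*$-homomorphism rather than a mere c.p.\ contraction.

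First I would choose $W$. Since $M$ is QWEP, by definition there is a WEP $C^*$-algebra $A$ together with a surjective $*$-homomorphism onto $M$. Replacing $A$ by its unitization (which is again WEP) and extending the quotient map in the evident unital way, I may assume that $W$ is unital and that $q:W\to M$ is a surjective \emph{unital} $*$-homomorphism; once $W$ is unital this is automatic, since $q(1_W)$ is then a unit for $q(W)=M$.

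Next I would fix any faithful unital representation $\pi:W\to B(H)$ and upgrade the WEP of $W$ to a factorization of $i_W$ whose first leg is $\pi$. The definition only provides a factorization $i_W:W {\buildrel\alpha\over\longrightarrow} B(K) {\buildrel\beta\over\longrightarrow} W^{**}$ through some $B(K)$ with $\alpha,\beta$ c.p.\ and contractive. Viewing $\alpha$ as a c.p.\ contraction on the unital operator system $\pi(W)\subset B(H)$ and using the injectivity of $B(K)$, Arveson's extension theorem yields a c.p.\ contraction $\tilde\alpha:B(H)\to B(K)$ extending it. Then $E:=\beta\tilde\alpha:B(H)\to W^{**}$ is c.p.\ and contractive and satisfies $E\pi=\beta\alpha=i_W$; since $\pi(1_W)=1$ we get $E(1)=i_W(1_W)=1$, so $E$ is in fact unital.

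Finally, writing $\ddot q:W^{**}\to M$ for the canonical normal extension of $q$ (a normal unital $*$-homomorphism with $\ddot q\, i_W=q$), I would set $w_1:=\pi$ and $w_2:=\ddot q\, E$. Then $w_1$ is a unital $*$-homomorphism, $w_2$ is a composition of the unital c.p.\ contraction $E$ with the unital $*$-homomorphism $\ddot q$ and is therefore unital c.p.\ (so $\|w_2\|_{cb}=1$), and $w_2w_1=\ddot q\, E\,\pi=\ddot q\, i_W=q$, as required. The only genuinely nontrivial step is the second one: the definition of WEP hands us a c.p.\ contraction as the first map, whereas the lemma demands that $w_1$ be a $*$-homomorphism, and the passage from the abstract factorization to the representation-plus-weak-expectation form is exactly where Arveson's extension theorem is needed. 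Everything else --- the reduction to a unital $W$, the unitality of $E$, and the existence of $\ddot q$ --- is routine.
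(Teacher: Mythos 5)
Your proof is correct and follows essentially the same route as the paper's: both pass to the canonical normal extension $\ddot q : W^{**}\to M$ of $q$ and use the WEP of $W$ to factor $i_W$ through some $B(H)$ with a unital $*$-homomorphism as the first leg, then compose. The only difference is that you spell out two steps the paper treats as standard, namely the reduction to a unital $W$ and the Arveson-extension argument upgrading the abstract c.p.\ factorization of $i_W$ to the representation-plus-weak-expectation form with $w_1=\pi$ a faithful unital representation.
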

\begin{proof}   Let $q: W \to M$ be a surjective unital $*$-homomorphism.
Let $i_W: W \to W^{**}$ denote the inclusion.
 Let $\ddot q: W^{**}  \to M$ denote the 
 normal $*$-homomorphism extending $q$ so that $\ddot q i_W =q$. 
If $W$ is WEP, the inclusion
 $i_W: W \to W^{**}$ factors as 
 $ i_W   : W  {\buildrel w_1\over\longrightarrow} B(H) {\buildrel v_2\over\longrightarrow} W^{**}  $,
 with $w_1$ unital $*$-homomorphism and $v_2$ unital c.p.  
 Let   $w_2=\ddot q v_2:B(H) \to M$. We have then $q=w_2w_1$.
\end{proof}
 
 The next lemma is a simple 
 consequence of Corollary \ref{Mi}. 
 \begin{lem}\label{22/10}
 Let $M$ be a  von Neumann algebra that is QWEP.
 Let $T: M \to M$ be a finite rank normal map.
 For some $H$ there is a factorization of $T$ of the form
 $$ T   : M  {\buildrel u\over\longrightarrow} B(H) {\buildrel v\over\longrightarrow} M  $$
 where $u$ is  normal, finite rank  with $\|u\|=\|T\|$ and $v$ is 
 a unital c.p. map (and hence $\|v\|=1$).\\
 If $T$ is in addition unital positive, then we can ensure that $u$ is also unital positive.
 \end{lem}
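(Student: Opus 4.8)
The plan is to use the QWEP hypothesis, via Lemma~\ref{23/10}, to realize $M$ as a genuine $C^*$-quotient, and then to lift the finite-dimensional range data of $T$ using the appropriate lifting theorem: Vesterstr\o m's Theorem~\ref{Mi} in the unital positive case and Ando's theorem (as recalled in Remark~\ref{pb}) in general.

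First I would invoke Lemma~\ref{23/10}: since $M$ is QWEP there are a unital $C^*$-algebra $W$ and a surjective unital $*$-homomorphism $q:W\to M$, so that $M\cong W/I$ with $I=\ker q$, together with a factorization $q=w_2w_1$ where $w_1:W\to B(H)$ is a unital $*$-homomorphism and $w_2:B(H)\to M$ is u.c.p. I will take $v:=w_2$ throughout, so that $\|v\|_{cb}=1$. Next, let $F:=T(M)$, which is finite dimensional because $T$ has finite rank, and let $T_0:M\to F$ be the corestriction of $T$, so that $\iota_F T_0=T$ where $\iota_F:F\hookrightarrow M$ is the inclusion; since $\iota_F$ is isometric, $T_0$ is normal with $\|T_0\|=\|T\|$. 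In the unital positive case $F$ is moreover a finite-dimensional operator system (positivity forces the range to be self-adjoint, and $T(1)=1\in F$), and $T_0:M\to F$ is unital positive.

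The heart of the argument is to lift the inclusion $\iota_F:F\to W/I$ back to $W$. In the general case $F$ is finite dimensional, hence separable with the MAP, so Ando's theorem as recalled in Remark~\ref{pb} provides a contractive lifting $\rho:F\to W$ with $q\rho=\iota_F$ and $\|\rho\|\le 1$. In the unital positive case I would instead apply Vesterstr\o m's Theorem~\ref{Mi} to the unital positive map $\iota_F:F\to W/I$, obtaining a unital positive lifting $\rho:F\to W$ with $q\rho=\iota_F$. In either case I set $\sigma:=w_1\rho:F\to B(H)$ and then
$$u:=\sigma T_0:M\to B(H),\qquad v:=w_2:B(H)\to M.$$
Since $w_1$ is a unital $*$-homomorphism it is u.c.p. and contractive, so $\sigma$ inherits the relevant property of $\rho$ (namely $\|\sigma\|\le 1$ in general, and unital positive in the positive case).

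Finally I would verify all the assertions. Composition gives $vu=w_2w_1\rho T_0=q\rho T_0=\iota_F T_0=T$. The map $u$ is finite rank, its range lying in the finite-dimensional space $\sigma(F)$, and it is normal because $T_0$ is normal while $\sigma$, having finite-dimensional domain, is automatically weak*-continuous. For the norm, $\|u\|\le\|\sigma\|\,\|T_0\|\le\|T\|$ in the general case, whereas $\|T\|=\|vu\|\le\|v\|_{cb}\|u\|=\|u\|$, so $\|u\|=\|T\|$; in the unital positive case $u$ is unital positive, whence $\|u\|=1=\|T\|$. The step I expect to be the crux is the \emph{exact} (not merely approximate) contractive lifting in the general case: an arbitrary finite-dimensional subspace of a Banach-space quotient need not lift isometrically, and it is precisely Ando's theorem---exploiting the genuine $C^*$-quotient structure $M\cong W/I$ supplied by Lemma~\ref{23/10} together with the MAP of $F$---that delivers $\|\rho\|\le1$ on the nose, which is what forces the equality $\|u\|=\|T\|$ rather than only $\|u\|\le(1+\varepsilon)\|T\|$.
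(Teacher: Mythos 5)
Your proof is correct and follows essentially the same route as the paper: use Lemma~\ref{23/10} to write $M=q(W)$ with $q=w_2w_1$ factored through $B(H)$, lift the finite-dimensional range $E=T(M)$ back to $W$, and take $u=w_1\rho T$, $v=w_2$, with the norm equality $\|u\|=\|T\|$ forced by $\|v\|\le 1$. If anything you are slightly more careful than the paper, whose proof cites Vesterstr\o m's Theorem~\ref{Mi} for a ``contractive unital positive lifting'' of $E$ in both cases; your split---Ando's theorem (Remark~\ref{pb}) for the general case, where $E$ need not be an operator system, and Theorem~\ref{Mi} for the unital positive case---is exactly what the argument requires.
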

 \begin{proof} Let $W,q,w_1,w_2$ be as
 in the preceding lemma.
 Let $E=T(M) \subset M =q(W)$.   By Corollary \ref{Mi} there is a contractive unital positive lifting
$r: E  \to W$. Let $u=w_1 r T$ and $v=w_2$.
Then since $T$ is normal (resp. and unital positive) and $\dim(E)<\infty$
 the map $u$ is 
  normal (resp. and unital positive) with $\|u\|\le \|T\|$,
  while  $v$ is u.c.p.
and $T=vu$. The latter implies $\|T\|\le \|v\|\|u\|\le \|u\|$.
  \end{proof}

$$\xymatrix{ & & W\ar[r]^{   {w_1} }\ar[dr]^{q}&B(H)\ar[d]^{\bf v} &   \\
& M\ar@/^3pc/[urr]^{\bf u}\ar[r]^{T}&T(M)\ar[u]^{r } \ar@{^{(}->}[r] & M  
   &   }$$
 
 Before proving Theorem \ref{ot1},
 we start by some preliminary remarks.
 
 \begin{rem}\label{rf2}
Consider the implication (i) $\Rightarrow$ (ii) in Theorem \ref{ot1}.
 By structural results   we may 
assume that $M$ is $\sigma$-finite.
Indeed, by  \cite[Chap. 5, prop. 1.40]{Tak} a general $M$ is isomorphic
to a direct sum of a family of algebras of the form
$M=B(H) \bar\otimes N$ with $N$ $\sigma$-finite.
For such an $M$, 
there is a net of projections $(p_\a)$ in $M$ tending weak* to $1$
such that $p_\a M p_\a$ is $\sigma$-finite,
or equivalently the unital subalgebra $M_\a=p_\a M p_\a \oplus \CC (1-p_\a)$ is 
$\sigma$-finite and is the range of a unital normal c.p. projection $Q_\a$
defined by $Q_\a(x)= p_\a x p_\a + f(x) (1-p_\a)$ where $f$ is a normal state on $M$.
Then $Q_\a$ tends  pointwise weak* to $Id_M$ 
and the range of each $Q_\a$ is $\sigma$-finite.
Moreover, if $M$ is seemingly injective, so is
 $Q_\a(M)$ by Remark \ref{rf1}.  
Thus to show that $M$ has the weak* PAP it clearly suffices to
show that each $Q_\a(M)$ has it, and
 the latter are $\sigma$-finite.
 \end{rem} 
\begin{rem}
Consider again the implication (i) $\Rightarrow$ (ii) in Theorem \ref{ot1}.
Assume $M$ $\sigma$-finite.
A tempting approach to prove this without using standard forms 
is to 
  first prove this assuming $M$ finite,
  from which the   semi-finite case  
    follows easily
 (using $pMp$
for finite   projections $p$), and then
   to  invoke Takesaki's duality theorem
to pass from the semi-finite
case to the general one.
This roughly produces a general $\sigma$-finite  algebra
 as the range of a unital c.p. projection on a 
 semi-finite one. However since the latter projection is 
 in general not normal,   this route 
 does not seem to go through
  to prove the weak* PAP.
  (We fell into that trap in a previous version of this paper.)
   In \cite{Wac}, to show  
   that injectivity implies the weak* CPAP, Wassermann uses this
  path to prove that injectivity implies (iii) in Theorem \ref{bla}
  and then he relies on
    (iii) $\Rightarrow$ (i) in Theorem \ref{bla}
  (previously proved by Effros-Lance \cite{EL}).
  In the seemingly injective context,
  the analogue of the latter implication
  is what we are after, so this approach no longer seems
  to work. 
 This explains our recourse to standard forms.
 Using the latter, we can give an argument covering
 the case of a general $\sigma$-finite  algebra.
 Nevertheless, we include the finite case in the proof below
 because we feel  it will be much easier for 
 the reader to follow the general case after this warm up.
 \end{rem} 

     \begin{proof}[Proof of Theorem \ref{ot1}]
Assume (i).  Then $M$ is  isometric to a (Banach) quotient of some $B(H)$
and hence is QWEP by Kirchberg's results (see e.g.  \cite{Oz1} or \cite[Th. 15.5]{P6}).
We will derive the   matricial weak*  PAP  
by the same kind of argument as the one   which   shows that
injective von Neumann algebras have the
weak* CPAP
(see Remark \ref{co} for references).\\
By Remark \ref{rf2}  we may 
assume that $M$ is $\sigma$-finite.
 
 We will first prove  (i) $\Rightarrow$ (ii) in the finite  
 case. 
Then in the second part of the proof we will show
how the argument can be adapted 
to cover the general case using 
the standard form of $M$.

Thus  we first consider
   a tracial probability space $(M,\tau)$.
  Let $B(H), u,v$ be as in   \eqref{e1}  with $u,v$ both unital and positive
  (or equivalently unital and contractive).
  Let ${\cl H}=H\oplus H \oplus \cdots$.
  We may embed $B(H)$ in $B({\cl H})$
  acting diagonally with infinite multiplicity,
  and we have  a unital c.p. projection from $B(\cl H)$ to  this copy of  $B(H)$.
  By Proposition \ref{oo2} (with $B=B(H)$) and Remark \ref{roo1}
we may assume 
that we have  an ultrafilter ${\cl U}$ refining a net of  finite rank operators
 $h_\a\in C_+$ such that
for all $b\in B(H)$
\begin{equation}\label{282}
\tau(v(b)^*v(b))\le \lim\nl_{\cl U} \tr( b^* h_\a b h_\a).\end{equation}
Here since we assume (i) $v$ is unital and c.p. and hence  satisfies \eqref{eoo4}
by Remark \ref{bla3} and \eqref{eb}. However, 
 we wish to record for future use (at the end of the proof)  that this also holds 
for all positive $v: B(H) \to M$  by Lemma \ref{28} and \eqref{eb}.

Let $b'=u(U)$ with $U$ unitary in $M$. We have $v(b')=U$ and hence
$1=\tau(v(b')^*v(b'))\le \lim\nl_{\cl U} \tr( {b'}^* h_\a b' h_\a)\le 1$
so that  $\tau(v(b')^*v(b'))= \lim \tr( {b'}^* h_\a b' h_\a)$.\\
(Therefore
$\lim \| b' h_\a -h_\a b'\|_2=0$, but actually we will not use this.)
\\Let $\langle\langle b,b\rangle\rangle=  \lim\nl_{\cl U} \tr( b^* h_\a b h_\a)-\tau(v(b)^*v(b))$
   for $b\in B(H)$. 
   By Cauchy-Schwarz
   for the inner product $\langle\langle b,b\rangle\rangle$ we have
   $$|\langle\langle b',b\rangle\rangle|^2 \le \langle\langle b',b'\rangle\rangle\langle\langle b,b\rangle\rangle$$
   and hence $\langle\langle b,b'\rangle\rangle=0$ for any $b\in B(H)$.
   Since $M$ is  spanned by its unitaries this remains true for any
     $b'\in u(M)$.
   In particular
\begin{equation}\label{eoo1}
\tau(v(b)^*v(b))= \lim\nl_{\cl U} \tr( b^* h_\a b h_\a)\end{equation}
   for any $b\in u(M)$.
    Note that
by polarization \eqref{eoo1}  implies
$\tau(v(b)^*v(a))= \lim \tr( b^* h_\a a h_\a)$ for any pair $a,b\in u(M)$.
 Moreover,  for any $b\in B$ and    $x \in M$,  let $b'=u(x)$.   Since $\langle\langle b,b'\rangle\rangle=0$ we have
\begin{equation}\label{eoo11}\tau(x^*v(b))= \lim\nl_{\cl U} \tr( u(x)^* h_\a  b h_\a).\end{equation}
Recall that  $u: M \to B(H)$ is a normal unital positive isometric map and
 all the $h_\a$'s are of finite rank.
Since $u$ is normal and $h_\a b h_\a\in B(H)_*$, 
for any given $b\in M$, 
the linear form $x\mapsto \ovl{\tr( u(x)^* h_\a b h_\a)}$
is in $M_*$. We identify $M_*$ with $L_1(\tau)$. Note that since $u$ is positive $u(x)^*=u(x^*)$. Thus
we have   a (uniquely defined) linear map $V_\a: B(H) \to M_*$ such that
\begin{equation}\label{e10}
\forall x\in M\ \forall b\in B(H) \quad \tr( u(x)^* h_\a b h_\a) =\tau(x^* V_\a(b)).\end{equation}
 Since $h_\a$ has finite rank, so has $V_\a$.
Since  $u$ is positive, $\tau(x^* V_\a(b))\ge 0$ for any $x\in M_+$, $b\in B(H)_+$,
and hence $V_\a$ is positive. By \eqref{eoo11}
$$\tau(x^* v(b))=\lim \tau(x^* V_\a(b))$$
for any $x\in M$, $b\in B(H)$
and hence denoting by $j: M \to M_*$ the canonical embedding
we have $V_\a(b)-j(v(b)) \to 0$ with respect to $\sigma(M_*,M)$
or equivalently weakly in $M_*$.
Passing to convex combinations of the $V_\a$'s   we may assume
that $\|V_\a(1_{B(H)}) - j(1_M)\|_{M_*}\to 0$ (and also that $\|V_\a(b) - j(v(b))\|_{M_*}\to 0$ for any $b\in B(H)$). Assume for a moment that
$V_\a(1)^{-1}$ exists and lies in $M$.
Then if we set 
\begin{equation}\label{e11}
T_\a(b)= V_\a(1)^{-1/2} V_\a(b) V_\a(1)^{-1/2},\end{equation}
the map  $T_\a : B(H) \to M$ is positive, unital (and hence contractive), of finite rank 
and we have
$$ \tau(x^*v(b))=\lim \tau(x^* V_\a(1)^{1/2}T_\a(b) V_\a(1)^{1/2}) .$$
Since $V_\a(1) \to 1$ in $L_1(\tau)$
we have
$\lim\| V_\a(1)^{1/2}-1\|_{L_2(\tau)} \to 0$; indeed this follows from
  the Powers-St\o rmer inequality (see e.g. \cite[Prop. 1.2.1]{Co},
 but in the present case since $V_\a(1)^{1/2}$ and $1$ commute
this could be reduced to the commutative case). In any case (for $x\in M$, $b\in B(H)$)
$$ \tau(x^*v(b))=\lim \tau(x^*  T_\a(b)  ) .$$
This shows that $T_\a(b) \to v(b)$ for $\sigma(M,M)$
and hence since $M$ is dense in $M_*$ and $(T_\a)$ is equicontinuous, $T_\a(b) \to v(b)$ for $\sigma(M,M_*)$. 
\\
Now let $H_\a\subset H$ be the (finite dimensional) range of $h_\a$
and let $p_\a\in B(H,H_\a) $ denote the orthogonal projection onto $H_\a$.
We define  $u_\a  :M \to  B(H_\a) $ by setting 
$u_\a (x)= p_\a u(x) p_\a^*$, 
and $v_\a (y)= T_{\a} (p_\a^* y p_\a)$. 
Note 
that since 
$p_\a^* p_\a h_\a = h_\a p_\a^* p_\a=h_\a$, by
\eqref{e10} and  \eqref{e11} we have
$T_{\a} (p_\a^* p_\a b p_\a^* p_\a) =T_\a(b)$ for any $b\in B(H)$,
and hence 
$v_\a(u_\a(x))=T_\a(u(x))$ for any $x\in M$.
This shows that $v_\a(u_\a(x))$ tends weak* to $  v(u(x))=x$.
Let $n(\a)={\dim(H_\a)}$.  We may identify $B(H_\a)$ with $ M_{n(\a)}$.
Then  $u_\a$ is normal,  $u_\a, v_\a$ are both unital and positive 
(and hence   contractive).
Thus $M$ satisfies (ii).
The only drawback is that we assumed $V_\a(1)^{-1}\in M$.
 This can be fixed easily: we pick a state $f$ on $B(H)$,    we replace $V_\a$
by $V_{\a,\vp}$ defined by
$V_{\a,\vp}(b)= V_\a(b) +\vp f( b) 1_M$ ($b\in B(H))$
 so that $V_{\a,\vp}(1)^{-1} \le \vp^{-1}   1_M$, 
and we let $\vp>0$ tend to zero as part of our net.
This completes the proof that (i) $\Rightarrow$ (ii) in the finite case. 

We will now prove the same implication (i) $\Rightarrow$ (ii) for a general 
  $M$. Assume (i).
Again, by Remark \ref{rf2} we may assume $M$ $\sigma$-finite so that $M$ admits a
faithful normal state $\varphi$. 

We will use Corollary \ref{rc2}.  Let $\xi=\xi_\phi$. Note $\|\xi\|=1$.
Let $s(x,x)=\langle \xi, Jx Jx \xi\rangle$ ($x\in M$).\\
Let $B=B(H)$. We claim that for any self-adjoint   tensor 
$t= \sum \ovl{b_j} \otimes {b_j}$   
in $\ovl B \otimes B$, we have
$$\sum \langle \xi, J v(b_j)J v(b_j) \xi\rangle 
\le  \| \sum \ovl{ b_j} \otimes { b_j} \|_{\min}  .$$
 Indeed, by Remark \ref{cy2} and   \eqref{eb} we have
 $$\sum \langle \xi, J v(b_j)J v(b_j) \xi\rangle 
\le \|\sum \ovl{v(b_j)} \otimes {v(b_j)} \|_{\max}\le \|\sum \ovl{b_j} \otimes {b_j} \|_{\max}
=\|\sum \ovl{b_j} \otimes {b_j} \|_{\min} .$$ 
  By the argument used in Proposition \ref{oo2}, our claim implies that
there is a net $( h_\a)$   in $  C_+$
such that  
  \begin{equation}\label{d30}
\forall b\in B\quad   \langle \xi, Jv(b) J v(b) \xi\rangle 
\le \lim\nl_{\cl U}    \tr (b^* h_\a {b} h_\a ) ,
\end{equation}
where  $\cl U$ is an ultrafilter  refining our net.
Indeed,  we  may use the diagonal  embedding $B(H)\subset B(H\oplus H\oplus \cdots)$ to replace $H$ by $H\oplus H\oplus \cdots$ in order
to replace the finitely supported probabilities on $\cl P\times C_+$
by  singly supported ones. (Actually this is not essential,
but it makes the argument slightly easier to follow.)
\\
Since $vu=Id_M$, we have
  \begin{equation}\label{d30'}
\forall x\in M\quad   \langle \xi, Jx J x \xi\rangle 
\le \lim\nl_{\cl U}    \tr ( u(x)^* h_\a {  u(x) } h_\a ) ,
\end{equation}
We will now show that equality holds in \eqref{d30'}. 
Let $s'$ be the sesquilinear form  defined on $M$ by
 $$s'(y,x)=\lim\nl_{\cl U}    \tr (u(y)^* h_\a {  u(x) } h_\a ) .$$
 Note that $s'$
is bipositive and $s'(1,1)=1$.
Since $s$ is self-polar 
and   $s \le s'$ on the diagonal,
 we must have $s=s'$  by Theorem \ref{wc}.
Thus we have  
\begin{equation}\label{d30b}\forall y,x\in M\quad   \langle \xi, Jy Jx \xi\rangle 
= \lim\nl_{\cl U}    \tr (u(y)^* h_\a {  u(x) } h_\a ) .\end{equation}
 Since 
  $h_\a$ has finite rank,
  there is a finite rank projection $P_\a$ such that
  $P_\a h_\a= h_\a P_\a=h_\a$, and hence 
  $\tr (  u(y ) ^* h_\a {  u(x) } h_\a )= \tr (  P_\a u(y ) ^* P_\a h_\a P_\a {  u(x) } P_\a h_\a )$.
  Therefore there is a positive unital  
  finite rank map $T_\a:  \ovl M \to M_*$ such that 
  \begin{equation}\label{d31} \forall y,x\in M\quad T_\a(\ovl y) (x)
  = \tr (  u(y ) ^* h_\a {  u(x) } h_\a ).\end{equation}
  We have $\lim_\cl U T_\a(\ovl y) (x)= \langle \xi, Jy Jx \xi\rangle$.
  In particular $T_\a(\ovl 1) \to \phi$ with respect to
  the weak topology $\sigma(M_*,M)$. Passing
  to convex linear combinations of the $T_\a$'s we may assume that
  $\|T_\a(\ovl 1) - \phi \|_{M_*}$ tends to $0$.
 \\ Let $\vp>0$. Consider the   positive map
  $T_{\a,\vp} :\ovl M \to M_*$ defined by
  $$T_{\a,\vp}(\ovl y)  =    T_\a(\ovl y)   +\vp \phi(y^*)\phi .$$
  The advantage of this perturbation of $T_{\a} $ is that
  $T_{\a,\vp}(\ovl 1)$ is faithful (since it dominates  $\vp  \phi$).
  Let $\xi_{\a,\vp} $ be the (unique)   vector in ${P}^\natural$ such that
  $T_{\a,\vp}(\ovl 1) (x)= \langle \xi_{\a,\vp},  x \xi_{\a,\vp} \rangle$.
  We now make $\vp$ tending to $0$ as part of our new net $(T_{\a,\vp})$
  indexed by ${\a,\vp}$. Clearly $\lim\| T_{\a,\vp}(\ovl 1)- \phi\|_{M_*}=0$.
  By the generalized Powers-St\o rmer inequality
  (see \cite[Lemma 2.10]{Hasta}) we have
  $\|\xi_{\a,\vp} -\xi \|\to 0$.
  \\
  We now apply Corollary \ref{rc2}
  to the faithful form $T_{\a,\vp}(\ovl 1)$.
  Since $T_{\a,\vp}$ is positive,
  there is a uniquely defined  positive unital finite rank
  map $V_{\a,\vp} : M \to M$ such that
  $T_{\a,\vp}(\ovl y)(x)=\langle \xi_{\a,\vp}, J V_{\a,\vp}(y) Jx \xi_{\a,\vp}\rangle $.  
  Since $\|\xi_{\a,\vp} -\xi \|\to 0$, we have
  $$\lim|\langle \xi_{\a,\vp}, J V_{\a,\vp}(y) Jx \xi_{\a,\vp}\rangle  -
  \langle \xi , J V_{\a,\vp}(y) Jx \xi \rangle| =0.$$
  Thus since $\lim T_{\a,\vp}(\ovl y)(x)=\lim T_{\a}(\ovl y)(x)$ we obtain
  $$\lim  \langle \xi , J V_{\a,\vp}(y) Jx \xi \rangle
  = \langle \xi, Jy Jx \xi\rangle ,$$
  and hence (since $\langle x_1\xi, Jy Jx_2 \xi\rangle= \langle \xi, Jy Jx_1^*x_2 \xi\rangle$)
  \begin{equation}\label{f8} \forall x_1,x_2\in M\forall y\in M\quad
  \lim  \langle x_1\xi , J V_{\a,\vp}(y) Jx_2 \xi \rangle
  = \langle x_1\xi, Jy Jx_2 \xi\rangle.\end{equation}
  Since $V_{\a,\vp}$ is unital and positive it is contractive.
  By \eqref{d31} the form $(y,x) \mapsto T_{\a,\vp}(\ovl y)(x)$ is separately normal 
  (since $u$ and $\phi$ are normal).
  Let 
   $ L=\{\phi_x\mid x\in M\} \subset M_*$ with
   $\phi_x(  \ovl y)=  \langle \xi , JyJ x \xi  \rangle$. 
  The linear span of the set $L$ is norm dense in $\ovl{M_*}$ (because 
  by Remark \ref{cy}
  $\ovl{M\xi}=H$ 
   which implies $L^\perp=\{0\}$). Therefore $V_{\a,\vp}$ is normal
  and \eqref{f8}  implies that
  $\lim V_{\a,\vp}(y)=y$ for the weak* topology $\sigma(M,M_*)$.
   This proves that (i) $\Rightarrow$ (iii) (since we already noted that
   QWEP follows from (i)).
   We recorded this argument  for later use in Theorem \ref{ot2},
   but we actually want to prove (i) $\Rightarrow$ (ii).
   This requires a little more work. Assume (i) still.

 We start again from
  \begin{equation}\label{d30''}
\forall x\in M\quad   \langle \xi, Jx J x \xi\rangle 
\le \lim\nl_{\cl U}    \tr (  u(x)^* h_\a   u(x)  h_\a ) ,
\end{equation}
Arguing exactly as for \eqref{eoo11} we deduce from \eqref{d30} and \eqref{d30b} that 
$$\forall  x\in M, \forall b\in B\quad   \langle \xi, Jv(b) Jx \xi\rangle 
= \lim\nl_{\cl U}    \tr (   b ^* h_\a    u(x)   h_\a ) .$$
Since $u$ is normal and $h_\a   b^* h_\a \in B(H)_*$ 
for any   $b\in B$, 
the linear form $$\theta_\a(\ovl b): x\mapsto  \tr (   b^* h_\a  u(x)  h_\a )$$
is in $ {M_*}$.  Moreover $\theta_\a: \ovl B \to  {M_*}$ is a (linear) positive map of finite rank.
Recall  $P_\a$  are finite rank projections in $B(H)$ such that
$h_\a= P_\a  h_\a P_\a $.
Note that
$$\theta_\a(\ovl b)=\theta_\a(\ovl{ P_\a b P_\a}).$$
Let $\vp>0$. Let $f_\a$ be a normal state on $B$ such that $f_\a(P_\a)=1$.
Let 
\begin{equation}\label{d33}\theta_{\a,\vp}(\ovl b)  = \theta_\a(\ovl b) + \vp f_\a(b^*) \phi .\end{equation}
We still have 
\begin{equation}\label{d33c} \theta_{\a,\vp}(\ovl b)=\theta_{\a,\vp}(\ovl{ P_\a b P_\a})\end{equation}
 and 
 $\theta_{\a,\vp}: \ovl B \to  {M_*}$ is   positive with finite rank,
but in addition  $ { \theta_{\a,\vp}( \ovl 1)}$ is faithful.

Let $\xi_{\a,\vp} $ be the (unique)   vector in ${P}^\natural$ such that
  $\theta_{\a,\vp}(\ovl 1) (x)= \langle \xi_{\a,\vp},  x \xi_{\a,\vp} \rangle$.
  We now apply Corollary \ref{rc2}
  to the faithful form $\theta_{\a,\vp}(\ovl 1)$.
  Since $\theta_{\a,\vp}$ is positive,
  there is a uniquely defined  positive unital linear  
  map $W_{\a,\vp} : B \to M$ such that
   \begin{equation}\label{d32} \theta_{\a,\vp}(\ovl b)(x)=\langle \xi_{\a,\vp}, J W_{\a,\vp}(b) Jx \xi_{\a,\vp}\rangle .\end{equation}
  We will make $\vp$ tending to $0$ as part of our new nets
  indexed by ${\a,\vp}$. Since $\theta_{\a,\vp}(\ovl 1) \to \phi$ weakly in $M_*$,
  we may pass to convex combinations to ensure that $\theta_{\a,\vp}(\ovl 1) \to \phi$
  in norm in $M_*$. Then as before $\|\xi_{\a,\vp} -\xi \|\to 0$.
   
 Let 
   $T_{\a,\vp}=W_{\a,\vp}u $.
   By \eqref{d33c} we have $W_{\a,\vp}(b)=    W_{\a,\vp}(P_\a b P_\a)$ for all $b\in B$.      
   In particular, $W_{\a,\vp}(1)=    W_{\a,\vp}(P_\a)=1$. 
 Let $H_\a=P_\a(H)$. 
 Let $i_\a: H_\a \to H$ denote the inclusion map. Of course   $i _\a^*: H \to H_\a$
 is essentially the same as $P_\a$:  more precisely $i _\a i _\a^*=P_\a$.
 We define $u_\a: M \to B(H_\a)$ by
 $u_\a(x)=  i _\a^* u(x)i _\a $   ($x\in M$), and 
 $v_{\a,\vp}: B(H_\a) \to M$ by 
 $v_{\a,\vp}(b) =W_{\a,\vp} (  i_\a b  i _\a^*) $ for any $b\in B(H_\a)$.
 Clearly 
 $$T_{\a,\vp}=W_{\a,\vp}u =v_{\a,\vp} u_\a.$$
   Therefore the maps in the net $(T_{\a,\vp})$ have the factorization required  in (ii) in Theorem \ref{ot1}. 
  Thus to conclude it suffices to verify that for any $y\in M$ 
   $$   \text{weak*}\lim  T_{\a,\vp}(y)= \text{weak*}\lim W_{\a,\vp}   u (y)=y$$
   or equivalently that  $$\text{weak*}\lim JW_{\a,\vp}   u (y)J=JyJ.$$
  Since the set $L$ is  total in $\ovl{M_*}$,   it suffices to show
  that $\lim \phi_x (\ovl{W_{\a,\vp}  u (y)}) = \phi_x(\ovl y)$ for any $x,y\in M$.
  We have (since $\|\xi_{\a,\vp} -\xi \|\to 0$) by \eqref{d32} and \eqref{d33} 
  $$\lim \phi_x(\ovl{W_{\a,\vp}   u (y)})= \lim\langle \xi_{\a,\vp},   J W_{\a,\vp}   u (y)J x \xi_{\a,\vp}\rangle =\lim \theta_{\a,\vp} (\ovl{  u (y)} ) (x)=\lim \theta_{\a } (\ovl{  u (y)} ) (x)$$
  $$=\lim   \tr (   u (y)^* h_\a   u(x)  h_\a ) =
   \langle \xi, Jy Jx \xi\rangle =\phi_x(\ovl y)
  .$$
  This completes the proof of (i) $\Rightarrow$ (ii).

Assume (ii) then the weak* PAP  is clear. In addition (ii)
implies that $Id_M$ factors as
$$Id_M=vu: M{\buildrel u\over\longrightarrow}   \ell_\infty(\{M_{n(\a)}\}) {\buildrel v\over\longrightarrow} M$$ 
where $v,u$ are unital positive contractions (and $u$ is normal).
Indeed, 
when $x\in M$ and $(a_\a)\in     \ell_\infty(\{M_{n(\a)}\})$, we may take $u(x)=(u_\a(x))$ and $v((a_\a))={\rm weak}^*\lim_\cl U v_\a(a_\a)$ for some suitable ultrafilter $\cl U$.
It follows that $M$ is isometric to a quotient Banach space of some $B(H)$
and hence by a result of Kirchberg (see \cite[Th. 15.5]{P6})
 $M$ is QWEP. Thus (ii) $\Rightarrow$ (iii).

Assume (iii). 
Let $(T_\a)_{\a\in I}$ be a net of  normal finite rank unital maps
 tending pointwise weak* to $Id_M$.
 Let $T_\a=v_\a u_\a$ be a factorization through $B(H_\a)$ as in 
 Lemma \ref{22/10}. Let $x\in M$ and $(a_\a)\in     \ell_\infty(\{B(H_\a)\})$.
 We define  $u: M \to \ell_\infty(\{B(H_\a)\})$   by $u(x)= (u_\a(x))_{\a\in I}$
 and $v((a_\a))={\rm weak}^*\lim_\cl U v_\a(a_\a)$ for some suitable ultrafilter $\cl U$.
This gives us a factorization of $Id_M$ of the form\\
$  M {\buildrel u\over\longrightarrow} \ell_\infty(\{B(H_\a)\}) {\buildrel v\over\longrightarrow} M$, with $u,v$ as required.
 Clearly $u^*(\ell_1(\{B(H_\a)_*\})   \subset M_*$, and hence $u$ is normal.
  Since we may
  obviously  replace $\ell_\infty(\{B(H_\a)\})$ by $B(H)$ with $H=\oplus H_\a$, this
  shows that $M$ is seemingly injective.\\
  For the last sentence in Theorem \ref{ot1}, 
suppose:
\item{(i)' } We have $Id_M=vu$  assuming merely that   $u$ and $v$ are positive maps,
with $u$ normal.  \\
Then, by Remark \ref{fac1}, $Id_M$ 
has a factorization as in \eqref{e1} with unital positive $u,v$ and $u$ normal.
A fortiori $\|u\|=\|v\|=1$.
As we already observed, the mere positivity of $v$ is enough to guarantee \eqref{282}.
But the latter was the only assumption needed on $v$ to complete the preceding argument for (i) $\Rightarrow$ (ii). This shows (i)' $\Rightarrow$ (ii)
and hence (i)' $\Rightarrow$ (i), and since the converse is trivial the proof is complete.
\end{proof}

\begin{rem}\label{or2} The preceding proof of (iii) $\Rightarrow$ (i) 
(resp. with $I$  countable) shows 
that any QWEP $M$  with the weak* BAP  is isomorphic to a complemented subspace
of $B(H)$ (resp. $B(\ell_2)$). More precisely, in the weak* BAP case, Lemma \ref{22/10}  gives us a factorization as in  \eqref{e1} 
 but in which the map $u$  is merely bounded and normal.  \end{rem}

We end this section with a characterization of 
the weak* PAP. It is not clear that    it is really different
from that of Theorem \ref{ot1}, because we do not know whether
the weak* PAP implies QWEP, as is the case in the c.p. case
for injectivity.

\begin{thm}\label{ot2} Consider a $\sigma$-finite von Neumann algebra  $M\subset B(H)$ in standard form. Let $\phi$ be a faithful normal state on $M$ and let
  $s $ be the associated self-polar form on $M$. The following  are equivalent:
\item{\rm (i)}  $M$ has the weak* PAP.
\item{\rm (ii)} There is a net of bipositive, separately normal
finite rank forms $s_\a$ tending pointwise to $s$.
\end{thm}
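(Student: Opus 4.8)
The plan is to prove the two implications separately, treating (i) $\Rightarrow$ (ii) as the substantive part and deriving (ii) $\Rightarrow$ (i) from the machinery already assembled for Theorem \ref{ot1}.

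For (i) $\Rightarrow$ (ii), suppose $M$ has the weak* PAP, with a net $(T_\alpha)$ of unital positive normal finite rank maps tending to $Id_M$ pointwise weak*. Writing $\xi=\xi_\phi$, the naive guess $s_\alpha(y,x)=\langle\xi,JT_\alpha(y)Jx\xi\rangle$ is separately normal, finite rank and satisfies $s_\alpha(a,b)\ge 0$ for $a,b\ge 0$, but it is \emph{not} positive definite in general (already the transpose map on $M_2$ gives $\langle\xi,JT(u)Ju\xi\rangle<0$ for a suitable unitary $u$). Instead I would set
\[
s_\alpha(y,x)=s(T_\alpha(y),T_\alpha(x))=\langle\xi,JT_\alpha(y)J\,T_\alpha(x)\xi\rangle .
\]
Now all algebraic requirements are immediate: positive definiteness of $s_\alpha$ on the diagonal follows from that of the self-polar form $s$ (Theorem \ref{C}); bipositivity $s_\alpha(a,b)=s(T_\alpha a,T_\alpha b)\ge 0$ for $a,b\ge 0$ follows from positivity of $T_\alpha$ and of $s$; separate normality and finite rank follow by writing $T_\alpha=\sum_i f_i(\cdot)z_i$ (with $f_i\in M_*$), so that $s_\alpha(y,x)=\sum_{i,j}\ovl{f_i(y)}f_j(x)\,s(z_i,z_j)$; and $s_\alpha(1,1)=s(1,1)=1$ since $T_\alpha$ is unital.

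The real point, and the main obstacle, is the pointwise convergence $s_\alpha\to s$. Since $s_\alpha(y,x)=\langle JT_\alpha(y^*)J\xi,\,T_\alpha(x)\xi\rangle$ and both factors a priori converge only weakly, weak* convergence of $(T_\alpha)$ alone is insufficient. I would upgrade it to \emph{strong} convergence $T_\alpha(x)\xi\to x\xi$ in $H$. For self-adjoint $a$, Kadison's inequality for the unital positive map $T_\alpha$ (see e.g. \cite{Pa2}) gives $T_\alpha(a)^2\le T_\alpha(a^2)$, whence $\|T_\alpha(a)\xi\|^2=\phi(T_\alpha(a)^2)\le\phi(T_\alpha(a^2))\to\phi(a^2)=\|a\xi\|^2$; on the other hand $T_\alpha(a)\xi\to a\xi$ weakly (testing against normal vector functionals), so $\|a\xi\|\le\liminf\|T_\alpha(a)\xi\|$ by lower semicontinuity. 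Hence $\|T_\alpha(a)\xi\|\to\|a\xi\|$, and weak convergence together with convergence of norms yields $T_\alpha(a)\xi\to a\xi$ in norm; splitting $x=a+ib$ gives $T_\alpha(x)\xi\to x\xi$ strongly for every $x$. Since $J$ is an isometric involution fixing $\xi$, the first factor satisfies $\|JT_\alpha(y^*)J\xi-Jy^*J\xi\|=\|(T_\alpha(y^*)-y^*)\xi\|\to0$. Strong convergence of both factors then gives $s_\alpha(y,x)\to\langle Jy^*J\xi,x\xi\rangle=s(y,x)$, proving (ii).

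For (ii) $\Rightarrow$ (i), I would run the $V_{\alpha,\vp}$ construction from the proof of Theorem \ref{ot1} in reverse. Given bipositive, separately normal, finite rank forms $s_\alpha\to s$, define positive finite rank maps $\psi_\alpha:\ovl M\to M_*$ by $\psi_\alpha(\ovl y)(x)=s_\alpha(y,x)$ (positivity of $\psi_\alpha$ is precisely bipositivity of $s_\alpha$, and separate normality places the values in $M_*$). Since $\psi_\alpha(\ovl 1)=s_\alpha(1,\cdot)\to\phi$ but need not be faithful, perturb to $\psi_{\alpha,\vp}(\ovl y)=\psi_\alpha(\ovl y)+\vp\,\phi(y^*)\phi$, which is still positive and finite rank and has faithful $\psi_{\alpha,\vp}(\ovl 1)\ge\vp\phi$. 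Let $\xi_{\alpha,\vp}\in P^\natural$ represent $\psi_{\alpha,\vp}(\ovl 1)$; applying Corollary \ref{rc2} (with $\xi_{\alpha,\vp}$ in place of $\xi$) yields a positive unital normal finite rank map $V_{\alpha,\vp}:M\to M$ with $\psi_{\alpha,\vp}(\ovl y)(x)=\langle\xi_{\alpha,\vp},JV_{\alpha,\vp}(y)Jx\xi_{\alpha,\vp}\rangle$, normality and finite rank coming from those of the form exactly as in Theorem \ref{ot1}. After passing to convex combinations so that $\|\psi_{\alpha,\vp}(\ovl 1)-\phi\|_{M_*}\to0$, the generalized Powers-St\o rmer inequality (see \cite[Lemma 2.10]{Hasta}) gives $\|\xi_{\alpha,\vp}-\xi\|\to0$; since $V_{\alpha,\vp}$ is contractive and $\psi_{\alpha,\vp}(\ovl y)(x)\to s(y,x)$, this forces $\langle\xi,JV_{\alpha,\vp}(y)Jx\xi\rangle\to\langle\xi,JyJx\xi\rangle$, and by totality of $L=\{\phi_x\}$ in $\ovl{M_*}$ (Remark \ref{cy}) we conclude $V_{\alpha,\vp}(y)\to y$ weak*. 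The net $(V_{\alpha,\vp})$ then witnesses the weak* PAP.
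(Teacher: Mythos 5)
Your proof is correct, and while your treatment of (ii) $\Rightarrow$ (i) is essentially the paper's own argument (perturb so the unit value $\psi_{\alpha,\vp}(\ovl 1)$ is faithful, represent it by a vector in $P^\natural$, apply Corollary \ref{rc2}, pass to convex combinations, use the Powers--St\o rmer inequality from \cite[Lemma 2.10]{Hasta}, and finish by totality of $L$), your proof of (i) $\Rightarrow$ (ii) takes a genuinely different route. You pick the same forms $s_\alpha(y,x)=s(T_\alpha(y),T_\alpha(x))$ as the paper, but the paper proves convergence abstractly: by Cauchy--Schwarz and separate weak* continuity of $s$ one gets $s\le s'$ on the diagonal for any ultrafilter limit $s'$ of $(s_\alpha)$, and since $s'$ is a normalized bipositive form, the Woronowicz--Connes uniqueness theorem for self-polar forms (Theorem \ref{wc}) forces $s'=s$, so no explicit convergence estimate is ever needed. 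You instead upgrade the weak* convergence $T_\alpha\to Id_M$ to norm convergence $T_\alpha(x)\xi\to x\xi$ in $H$, using Kadison's Schwarz inequality $T_\alpha(a)^2\le T_\alpha(a^2)$ (valid for unital positive maps on self-adjoint elements) together with weak lower semicontinuity of the norm, and then compute $\lim s_\alpha(y,x)=\langle Jy^*J\xi,x\xi\rangle=s(y,x)$ directly from strong convergence of both factors. Both arguments are complete and correct. Your route is more elementary and self-contained: it avoids Theorem \ref{wc} entirely in this direction and yields the stronger intermediate fact that $T_\alpha\to Id_M$ strongly on the cyclic vector, which is of independent interest. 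The paper's route is shorter and exploits exactly the self-polarity that makes $s$ canonical; it shows that the one-sided inequality $s\le s'$, with no quantitative control, already pins down the limit -- the same mechanism the paper reuses in the proof of Theorem \ref{ot1}, which is why it is the "native" argument there.
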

\begin{proof} 
Assume (i). Let $(T_\a)$ be  a net of  unital positive finite rank normal maps
tending to $Id_M$ pointwise weak*.
Let $s_\a(y,x)=s( T_\a(y), T_\a(x))$.
Since $s$ is separately weak* continuous,
we have $s(x,x) \le \liminf s_\a(x,x)$.
(Indeed, we have
by Cauchy-Schwarz
 $ |s( x, x)|^2=\lim |s( x, T_\a(x))|^2 \le \liminf |s( x, x)||s( T_\a(x), T_\a(x))|.$)
 Thus if $\cl U$ is an ultrafilter refining the net we have
$s(x,x)\le s'(x,x)$ with $s'(x,x)=\lim\nl_\cl U s_\a(x,x)$.
Since $s'$ is a bipositive normalized sesquilinear form
we have $s'=s$ by Theorem \ref{wc} whence  (ii).\\
Assume (ii). We will repeat the proof of the 
implication (i) $\Rightarrow$ (iii)
in Theorem \ref{ot1}, so we just sketch the argument.
We replace the net $(s_\a)$ by
$s_{\a,\vp}(y,x)= (1+\vp)^{-1}(s_\a(y,x) + \vp\ovl{\phi(y)} \phi(x))$.
Let $\phi_{\a,\vp}(x)=s_{\a,\vp}(1,x)$.
We define the unit vector $\xi_{\a,\vp} \in P^\natural$ by
$\phi_{\a,\vp}(x)=\langle \xi_{\a,\vp}, x \xi_{\a,\vp}\rangle$.
Then, using Corollary \ref{rc2}, we define the unital positive
(and hence contractive)  finite rank map $V_{\a,\vp} : M \to   M $ 
by
$$\forall x,y\in M\quad
s_{\a,\vp}(y,x)= \langle \xi_{\a,\vp}, J V_{\a,\vp}(y)  J x \xi_\a\rangle.$$
We have $\phi_{\a,\vp}  \to \phi $ weakly in $M_*$. Passing to convex combinations,
we may assume that  $\phi_{\a,\vp}  \to \phi $ in norm in $M_*$.
By \cite[Lemma 2.10]{Hasta} we have
$\xi_{\a,\vp}  \to \xi $ in norm in $H$. This allows us to pass from
$\lim  \langle \xi_{\a,\vp}, J V_{\a,\vp}(y)  J x \xi_{\a,\vp}\rangle=s(y,x)$
to 
$$\lim  \langle \xi, J V_{\a,\vp}(y)  J x \xi \rangle=s(y,x)
= \langle \xi, J y  J x \xi \rangle
.$$
By \eqref{f8} again we conclude that $V_{\a,\vp}(y) \to y$
for the weak* topology of $M$, which yields (i).
\end{proof}

\begin{rem}\label{cojo} In \cite{CoJo}, Connes and Jones studied an analogue of property (T) for
a von Neumann algebra $M$. When $M$ is equipped with a standard finite trace $\tau$  the latter property (T) is an obstruction 
to the existence
of certain approximations of the identity on $L_2(\tau)$
formed of compact \emph{c.p.} maps. 
More generally, in Theorem \ref{ot2} if $M$ has (T)
then (ii) cannot hold if  each bipositive form $s_\a$
defines a \emph{compact c.p.} map from $M$ to $\ovl{M_*}$.
We suspect that   some examples of $M$ with
property (T) can be found that enjoy a stronger property ruling out
the same for \emph{compact positive} maps; such examples would then fail the weak*    PAP.
\end{rem}

For completeness, we record here the following simple fact.
\begin{lem}\label{upmap} 
Let $M$ be 
  a von Neumann algebra.
  Assume that there is a uniformly bounded net of positive finite rank maps $(T_\a)$
  that tend pointwise weak* to $Id_M$. Then
$M$ has the   weak*    PAP.
\end{lem}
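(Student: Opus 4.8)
The plan is to reduce the assertion to condition (ii) of Theorem~\ref{ot2} and then quote the implication (ii)$\Rightarrow$(i) established there, which already carries out the delicate construction of the \emph{unital} approximating maps (via Corollary~\ref{rc2} and the Powers--St\o rmer inequality). Thus the real task is to manufacture, from the given net, a net of bipositive, separately normal, finite rank forms tending pointwise to the self-polar form $s$.

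First I would reduce to the $\sigma$-finite case as in Remark~\ref{rf2}: the hypothesis passes to each corner $Q_\a(M)$ (restrict the given maps to $Q_\a(M)$ and follow them by the normal unital c.p. projection $Q_\a$), and the weak* PAP of the corners reassembles to that of $M$ by composing the approximants with the $Q_\a$'s. So assume $M$ is $\sigma$-finite, fix a faithful normal state $\phi$, put $M$ in standard form with cyclic separating vector $\xi=\xi_\phi$, and let $s(y,x)=\langle\xi,JyJx\xi\rangle$ be the self-polar form of Theorem~\ref{C}. Writing $S_\a$ for the given maps (which I take normal, as in the hypothesis recorded in the text), these are normal positive finite rank maps, uniformly bounded by some $C$, with $S_\a\to Id_M$ pointwise weak*.

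Next I would set $s_\a(y,x)=s(S_\a(y),S_\a(x))$. Each $s_\a$ is finite rank, separately normal (because $S_\a$ is normal and $s$ is separately normal) and bipositive (because $s$ is bipositive and $S_\a$ is positive). Fix an ultrafilter $\cl U$ refining the net and put $s'(x,x)=\lim_{\cl U}s_\a(x,x)$, a legitimate bounded limit since $s_\a(x,x)\le C^2\|x\|^2$. Using $S_\a(x)\to x$ weak*, the separate weak* continuity of $s$, and Cauchy--Schwarz for the positive form $s$, one gets for every $x$
\[
|s(x,x)|^2=\lim_{\cl U}|s(x,S_\a(x))|^2\le s(x,x)\,\lim_{\cl U}s_\a(x,x)=s(x,x)\,s'(x,x),
\]
so that $s\le s'$ on the diagonal. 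Granting $s'(1,1)=1$, the form $s'$ is a normalized bipositive form dominating the self-polar $s$ on the diagonal, hence $s'=s$ by the Woronowicz--Connes rigidity (Theorem~\ref{wc}); polarization then gives $s_\a\to s$ pointwise along $\cl U$, which is precisely condition (ii) of Theorem~\ref{ot2}, and we are done.

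Everything therefore reduces to securing $s'(1,1)=1$, and this is the one genuinely delicate point. Since $s_\a(1,1)=\langle JS_\a(1)J\xi,\,S_\a(1)\xi\rangle$, it would be enough to have $S_\a(1)\to1$ $\sigma$-strongly, for then $S_\a(1)\xi\to\xi$ and $JS_\a(1)J\xi\to\xi$ in norm, whence $s_\a(1,1)\to\|\xi\|^2=1$. The hypothesis gives only $S_\a(1)\to1$ weak*, and a bounded positive element may tend to $1$ weak* without tending to it strongly, so no direct rescaling of $S_\a$ is available; this is the main obstacle. I would overcome it by convex combinations: the $\sigma$-strong and $\sigma$-weak topologies on $M$ share the predual $M_*$ as common continuous dual, hence determine the same closed convex sets, so $S_\a(1)\to1$ $\sigma$-weakly yields convex combinations of the $S_\a(1)$ converging $\sigma$-strongly to $1$. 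Carrying out these convex combinations simultaneously on the maps (legitimate since, on the product of a $\sigma$-strong copy of $M$ in the unit slot with weak* copies in the remaining slots, the continuous dual is still built from normal functionals, so convex closures are unchanged) furnishes a new net of normal positive finite rank maps, still bounded by $C$, with $S_\a\to Id_M$ pointwise weak* \emph{and} $S_\a(1)\to1$ $\sigma$-strongly. For this net $s'(1,1)=1$, and the preceding paragraph applies. I expect the bookkeeping of this simultaneous convex-combination step to be the only real work; the remainder is a routine run through the standard-form apparatus of \S\ref{stan}.
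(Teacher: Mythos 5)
Your proposal is correct, and there is no circularity in it (Theorem \ref{ot2} is established before and independently of Lemma \ref{upmap}), but it takes a genuinely different route from the paper's own proof. The paper argues directly: after the same $\sigma$-finite reduction, it perturbs $T_\a$ to $T_\a + \vp f(\cdot)1$ so that $T_\a(1)$ is invertible and the form $x\mapsto \langle \xi, JT_\a(1)Jx\xi\rangle$ is faithful, applies Corollary \ref{rc2} to the \emph{one-sided} forms $(y,x)\mapsto \langle \xi, JT_\a(y)Jx\xi\rangle$ to produce unital positive normal finite rank maps $T'_\a$ satisfying $\langle \xi, JT_\a(y)Jx\xi\rangle = \langle \xi_\a, JT'_\a(y)Jx\xi_\a\rangle$, and then upgrades $\xi_\a\to\xi$ to norm convergence via convex combinations in $M_*$ and the generalized Powers--St\o rmer inequality. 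Because those forms are linear in $T_\a$ (only one slot is composed with the map), the weak* convergences $T_\a(1)\to 1$ and $T_\a(y)\to y$ pass to the forms immediately, so the paper never faces your normalization problem and never needs Theorem \ref{wc}. You instead compose on both sides, $s_\a = s(S_\a\cdot, S_\a\cdot)$, which is quadratic in $S_\a$; this forces you (a) to invoke the Woronowicz--Connes rigidity of Theorem \ref{wc} to identify the limit form, and (b) to repair the normalization $s'(1,1)=1$ by your Mazur argument at the unit --- which is correct, and can even be simplified: convex combinations drawn from tails of the net automatically preserve pointwise weak* convergence to $Id_M$, so Mazur is only needed in the single unit slot, with no product-space bookkeeping. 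You then quote Theorem \ref{ot2} (ii)$\Rightarrow$(i) as a black box, inside which the paper's actual mechanism (perturbation, Corollary \ref{rc2}, convex combinations in $M_*$, Powers--St\o rmer) is carried out. So your route buys modularity --- the only new work is manufacturing and normalizing the forms --- at the price of the extra rigidity theorem and the $\sigma$-strong repair step, while the paper's one-sided pairing makes both issues disappear and yields a shorter, self-contained argument. (Both proofs, as you correctly flagged, use that the given $T_\a$ are normal, in accordance with the sentence in \S\ref{bg} that cites the lemma; and your two remaining technical steps --- extracting from ultrafilter convergence a subnet of forms converging pointwise, and rescaling so the forms are exactly normalized --- are standard and unproblematic.)
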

\begin{proof} We may assume $M$ $\sigma$-finite and in standard form with
a cyclic and separating unit vector $\xi\in P^\natural$.
For any $\vp>0$  we may replace $T_\a$ by
$x \mapsto T_\a(x) + \vp f(x) 1$ where $f$ is any fixed normal state,
so that 
 we may as well assume that $T_{\a }(1)$   is   invertible for all $\a$.
 As earlier let $\xi_{\a } \in P^\natural$ such that
$\langle \xi, J T_{\a }(1)J  x \xi \rangle= \langle \xi_{\a} ,    x  \xi_{\a} \rangle$.
By Corollary \ref{rc2} there is  a unital positive finite rank normal map
 $T'_{\a}: M \to M$  
such that $\langle \xi, J T_{\a }(y)J  x \xi \rangle= \langle \xi_{\a} ,  J T'_{\a}(y)J  x  \xi_{\a} \rangle$ for any $y\in M$. Since $T_{\a }(1) \to 1$ weak*,
the normal forms
$x \mapsto \langle \xi_{\a} ,    x  \xi_{\a} \rangle$ 
tend pointwise to $x \mapsto \langle \xi,    x \xi \rangle$ and are bounded in $M_*$.
Passing to convex combinations, we may assume the convergence is in
the norm of $M_*$. By \cite[Lemma 2.10]{Hasta} we have
$\xi_{\a }  \to \xi $ in norm in $H$. It follows that
$$\lim \langle \xi  ,  J T'_{\a}(y)J  x  \xi  \rangle =\lim\langle \xi_{\a} ,  J T'_{\a}(y)J  x  \xi_{\a} \rangle= \lim \langle \xi, J T_{\a }(y)J  x \xi \rangle= \langle \xi, J yJ  x \xi \rangle.$$
As before this implies that $T'_{\a}(y)\to y$ weak*.
\end{proof}

 \section{Comparison with injectivity}\label{inj}
 
 Our goal here is to highlight the analogy of our results
to  the previous ones on injectivity and semidiscreteness,
that came out of \cite{Co}.
 Connes's ideas \cite{Co} also led to the equivalence of injectivity with 
 approximate finite dimensionality (also called
 hyperfiniteness) and also with  the weak* CPAP
 (also called semidiscreteness).
 Since we have no analogue of approximate finite dimensionality (see however Remark \ref{hyp}),
 for which we refer to  chapter XVI in \cite{Tak2}  or to  \cite[chap. 11]{An2},
 we will concentrate on the parallel between   weak* CPAP$\Leftrightarrow$injective
 and weak* PAP$\Leftrightarrow$seemingly injective.
 We will focus on a result due to Effros and Lance from \cite[Prop. 4.5]{EL},
 with the refinement (i) $\Leftrightarrow$ (ii) proposed in \cite{[CE4]}.
\begin{thm}[\cite{EL}]\label{bla}  
The following properties of  a  von Neumann algebra $M \subset B(H)$ are equivalent:
\item{\rm (i)}
$M$  has the weak* CPAP (i.e. is semidiscrete according to \cite{EL}).
\item{\rm (ii)} There is a net of integers $n(\a)$ and normal  finite rank
   maps $T_\a: M\to M$ of the form
   $$    M {\buildrel u_\a\over\longrightarrow} M_{n(\a)} {\buildrel v_\a\over\longrightarrow} M$$
   such that $u_\a,v_\a$  are both unital and c.p.
    (so that $\|u_\a\|\le 1$,  $\|v_\a\|\le 1$), $u_\a$ is normal
   and $T_\a(x)=v_\a u_\a(x) \to x$ weak* for any $x\in M$.
\item[{\rm (iii)}] The
$*$-homomorphism $p: M' \otimes M\to B(H)$ defined by
 $p(y\otimes x)=yx$ extends to a contractive $*$-homomorphism
 on $M' \otimes_{\min} M $.
 \end{thm}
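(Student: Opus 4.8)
The plan is to establish the cycle (ii) $\Rightarrow$ (i) $\Rightarrow$ (iii) $\Rightarrow$ (ii). The first implication is immediate: the composite $T_\a=v_\a u_\a$ of two unital c.p. maps is again a unital c.p. normal finite rank map, and $T_\a\to Id_M$ pointwise weak*, which is precisely the weak* CPAP.

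For (i) $\Rightarrow$ (iii), I would fix $y_1,\dots,y_k\in M'$ and $x_1,\dots,x_k\in M$. Since $T_\a(x_i)\to x_i$ weak* and left multiplication by the fixed $y_i$ is weak* continuous, $\sum_i y_ix_i$ is the weak* limit of $\sum_i y_iT_\a(x_i)$, so by weak* lower semicontinuity of the norm it suffices to bound $\|\sum_i y_iT_\a(x_i)\|$ by $\|\sum_i y_i\otimes x_i\|_{M'\otimes_{\min}M}$ uniformly in $\a$. I would factor through $M_{n(\a)}$ in two steps. Because $u_\a$ is c.p. and contractive, $Id_{M'}\otimes u_\a:M'\otimes_{\min}M\to M'\otimes_{\min}M_{n(\a)}$ is contractive (Remark \ref{bla3}), so $\|\sum_i y_i\otimes u_\a(x_i)\|_{\min}\le\|\sum_i y_i\otimes x_i\|_{\min}$. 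On the other hand the inclusion $M'\hookrightarrow B(H)$ is a unital $*$-homomorphism, and $v_\a$ followed by $M\hookrightarrow B(H)$ is u.c.p., with commuting ranges since $M$ and $M'$ commute; by the standard fact that a $*$-homomorphism and a u.c.p. map with commuting ranges combine into a u.c.p. map on the maximal tensor product, $(y,a)\mapsto y\,v_\a(a)$ extends to a u.c.p., hence contractive, map $q_\a$ on $M'\otimes_{\max}M_{n(\a)}=M'\otimes_{\min}M_{n(\a)}$, the equality holding by nuclearity of $M_{n(\a)}$. Since $q_\a(\sum_i y_i\otimes u_\a(x_i))=\sum_i y_iT_\a(x_i)$, the two steps give the required bound. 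Hence $p$ is contractive for the minimal norm, and being a $*$-homomorphism on the dense $*$-subalgebra $M'\otimes M$ it extends to a contractive $*$-homomorphism, which is (iii).

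The substantial implication is (iii) $\Rightarrow$ (ii), and my plan is to feed (iii) into the standard form and then rerun the proof of (i) $\Rightarrow$ (ii) of Theorem \ref{ot1} with complete positivity in place of positivity. Realizing $M$ in standard form, $M'=JMJ$ and, by Remark \ref{cy2}, the map $\ovl y\otimes x\mapsto JyJx$ is always a contractive $*$-homomorphism on $\ovl M\otimes_{\max}M$; identifying $JyJ\leftrightarrow\ovl y$, hypothesis (iii) asserts that the same map is contractive for the minimal norm. Using the description of the maximal norm by bipositive separately normal forms (Lemma \ref{wc26}, Remark \ref{cy2}) this yields $\|t\|_{\max}\le\|t\|_{\min}$ for self-adjoint $t=\sum_j\ovl{x_j}\otimes x_j$ in $\ovl M\otimes M$, whence by Haagerup's theorem (Remark \ref{rc}) $M$ has the WEP and, being a von Neumann algebra, is injective. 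Injectivity supplies a factorization $Id_M=vu$ through some $B(H)$ with $u,v$ unital c.p. and $u$ normal, and I would then run the construction of the proof of Theorem \ref{ot1} — the finite rank $h_\a$ from Proposition \ref{oo2}, the self-polar form identity from Theorem \ref{wc}, and the finite rank approximants through $M_{n(\a)}$ built via Corollary \ref{rc2} — now with completely positive data at every stage.

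The main obstacle is exactly this upgrade from positivity to complete positivity. Corollary \ref{rc2} and the finite-dimensional lifting underlying Lemma \ref{22/10} (Vesterstr\o m's Theorem \ref{Mi}) only manufacture positive maps, whereas (ii) demands unital c.p. factors. What is needed is a completely positive refinement of Corollary \ref{rc2} together with the u.c.p. lifting (the Choi-Effros completely positive analogue of Theorem \ref{Mi}), so that the forms $\theta_\a(\ovl b)(x)=\tr(b^*h_\a u(x)h_\a)$, which are completely positive once $h_\a\ge 0$ and $u$ is c.p., are converted into c.p. maps $W_\a$ and hence c.p. factors $v_\a$, while $u_\a(x)=i_\a^* u(x)i_\a$ is automatically c.p. as a compression. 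Granting these completely positive analogues, which are available in the injective setting, the self-polar-form argument goes through unchanged and produces the matricial weak* CPAP of (ii), closing the cycle.
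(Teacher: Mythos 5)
The paper does not actually prove Theorem \ref{bla}: it is quoted from \cite{EL}, with the refinement (i) $\Leftrightarrow$ (ii) attributed to \cite{[CE4]}, and the paper only supplies the surrounding discussion (standard forms, \eqref{f2}--\eqref{ff7}, Haagerup's identity \eqref{f6}) and the analogous Theorems \ref{n1} and \ref{n2}. Your proposal must therefore stand on its own, and it has one fatal structural gap: the step you label (i) $\Rightarrow$ (iii) is not a proof of that implication. From its first line it manipulates the factorization $T_\a=v_\a u_\a$ through $M_{n(\a)}$, which is exactly the extra data that distinguishes (ii) from (i); under (i) alone the $T_\a$ are finite rank normal u.c.p.\ maps with \emph{no} given matricial factorization, and your two tensor-norm steps (contractivity of $Id_{M'}\otimes u_\a$ for the min norm, then the commuting-ranges trick on $M'\otimes_{\max}M_{n(\a)}=M'\otimes_{\min}M_{n(\a)}$) have nothing to act on. So what you actually establish is (ii) $\Rightarrow$ (i), (ii) $\Rightarrow$ (iii), and (conditionally, see below) (iii) $\Rightarrow$ (ii): the cycle never exits (i), and the equivalence of (i) with the other two --- precisely the nontrivial refinement the paper credits to \cite{[CE4]} --- is untouched. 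The repair is not cosmetic, but there is a genuine route: for a finite rank normal u.c.p.\ $T$ write $T(x)=\sum_{i=1}^n f_i(x)a_i$ with $f_i\in M_*$, $a_i\in M$, so that $F(y\otimes x)=\langle\xi,yT(x)\xi\rangle$ is a finite sum of elementary tensor functionals and hence min-continuous with \emph{some} finite constant; next observe $F(t^*t)\ge 0$ for all algebraic $t\in M'\otimes M$, because $y\otimes x\mapsto yT(x)$ is u.c.p.\ for the max norm (commuting ranges); since positive elements of $M'\otimes_{\min}M$ are norm limits of algebraic squares, the min-continuous extension of $F$ is a positive functional, whence $\|F\|_{(M'\otimes_{\min}M)^*}=F(1\otimes 1)=1$. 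Letting $\a\to\infty$ and using the GNS/cyclicity argument by which the paper identifies \eqref{f3} with \eqref{f2} then gives (iii). This bootstrapping of the a priori constant via positivity is the idea your argument is missing.

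Concerning (iii) $\Rightarrow$ (ii), your program is the right one and matches the paper's intent (Theorem \ref{n1} is said to be provable exactly as Theorem \ref{n2}), but three points need flagging. First, to deduce $\|t\|_{\max}\le\|t\|_{\min}$ on self-adjoint square tensors you should invoke Haagerup's identity \eqref{f6}: Remark \ref{cy2} only yields the easy inequality $|\langle\xi,\rho(t)\xi\rangle|\le\|t\|_{\max}$, and Lemma \ref{wc26} by itself does not relate general separately normal bipositive forms to vector functionals of $\rho$. Second, (iii) is stated for the \emph{given} embedding $M\subset B(H)$, while your argument assumes $M$ in standard form; some transfer (or the paper's remark that one may choose a convenient embedding) is needed. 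Third, and most seriously, the ``completely positive refinement of Corollary \ref{rc2}'' on which everything hinges is exactly the technical heart of the theorem of Connes, Choi--Effros and Wassermann \cite{Co,[CE4],Wac} that injectivity implies (matricial) semidiscreteness; ``granting'' it amounts to citing that theorem rather than proving it (and the liftings you also invoke, Lemma \ref{22/10} and Theorem \ref{Mi}, play no role in this direction --- in Theorem \ref{ot1} they enter only in (iii) $\Rightarrow$ (i)). For a statement the paper itself quotes from the literature this level of citation may be tolerable, but then the honest summary of your proposal is: a correct proof of (ii) $\Rightarrow$ (i) and (ii) $\Rightarrow$ (iii), a proof of (iii) $\Rightarrow$ (ii) modulo the known injective $\Rightarrow$ semidiscrete theorem, and no proof at all that (i) implies (ii) or (iii).
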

Since  the weak* CPAP does not depend
 on the embedding $M \subset B(H)$, one may choose
 a convenient embedding in (iii).
 If $M\subset B(H)$ is in standard form 
then $M'\simeq \ovl M$. In that case, $p$ becomes
the map taking $\ovl y\otimes x$ to $JyJx$, or equivalently
it becomes
the  representation defined on ${\ovl M \otimes_{\min} M}$ by
  $\rho(\sum \ovl{y_j} \otimes x_j )= \sum    J{y_j}J   x_j$ (see \S \ref{stan}  for the notation).
 Then the preceding statement says that $M$  has the weak* CPAP
if and only if 
for any   $t=\sum \ovl{y_j} \otimes x_j\in {\ovl M \otimes  M}$ we have
\begin{equation}\label{f2}  \| \rho(t)  \|_{B(H)} \le  \|t \|_{\ovl M \otimes_{\min} M} \end{equation}
Assume $M$ $\sigma$-finite for simplicity
and let 
  $\xi\in P^\natural$ be a unit vector that is both separating and cyclic.
  We claim that it suffices to have for any   $t\in {\ovl M \otimes  M}$    
\begin{equation}\label{f3}   |   \langle \xi,  \rho(t) \xi\rangle |\le  \|t \|_{\ovl M \otimes_{\min} M} .\end{equation}
Indeed, if this holds then $F$ defined by
$F(t) =  \langle \xi, \rho(t) \xi \rangle$ defines
a (vector) state on ${\ovl M \otimes_{\min} M}$,  and $\xi$ is a cyclic vector
for $\rho$. Thus the GNS representation of  $F$ on  ${\ovl M \otimes_{\min} M} $
(which is clearly contractive) can be identified with $\rho$.
So we recover \eqref{f2}. Thus  \eqref{f2} $\Leftrightarrow$ \eqref{f3}.

By a combination of results due to
 Haagerup (unpublished)  
for any finite set $(x_j)$ in $M$ we have  
\begin{equation}\label{f6} \| \sum \ovl{x_j} \otimes x_j \|_{\max} =
\|\rho(\sum    \ovl{x_j} \otimes x_j )\|_{B(H)}.\end{equation}
See \cite[Th. 23.39]{P6} for a detailed proof. In the semifinite case \eqref{f6} essentially goes back to \cite{Pro}.

The  identity \eqref{f6} 
shows that \eqref{f2} holds if and only if for any finite set $(x_j)$ in $M$ we have
\begin{equation}\label{ff7} \| \sum \ovl{x_j} \otimes x_j \|_{\ovl M \otimes_{\max} M} =
\| \sum \ovl{x_j} \otimes x_j \|_{\ovl M \otimes_{\min} M}.\end{equation}
This fact
 a posteriori
clarifies the equivalence between the weak* CPAP and  \eqref{f2},
which boils down to
(i) $\Leftrightarrow$ (iii) in Theorem \ref{bla}.
 This was
proved by Effros and Lance in \cite{EL} which circulated  (in preprint form) before
Connes and Choi-Effros  proved in \cite{Co,[CE2]} its equivalence with injectivity.
At the time Effros and Lance 
proved that the weak* CPAP  implies injectivity but
could not prove the converse. After the Connes paper \cite{Co}
it could be proved that injectivity implies 
(iii) in Theorem \ref{bla} (see \cite{Wac}), and that gave the desired converse.

\begin{thm}\label{n1} Let $M$ be a  von Neumann algebra.  The following are equivalent,
where in   (iii) we assume $M$ $\sigma$-finite in standard form,
and in (iii)' we assume  $M$ finite with a  faithful, normal and normalized trace $\tau$.
\item[{\rm (i)}] $M$ has the matricial weak* CPAP.
\item[{\rm (ii)}] There is a unital c.p. normal injective linear map $u: M \to B(H)$ such that $u^{-1}_{|u(M)}:u(M) \to M$ is positive 
and 
for any finite set $(x_j)$ in $M$ we have
$$\|\sum \ovl{x_j} \otimes x_j \|_{\ovl M \otimes_{\max} M} \le
   \|\sum \ovl{u(x_j)} \otimes u(x_j) \|_{\ovl{B(H)} \otimes_{\min}  {B(H)}} .$$
\item[{\rm (iii)}] There is a faithful normal state $\phi$ with associated
unit vector $\xi \in P^\natural$ such that
$$|\sum \langle \xi, Jx_j Jx_j \xi \rangle | \le \sup  \|\sum \ovl{u(x_j)} \otimes u(x_j) \|_{\ovl{B(H)} \otimes_{\min} B(H) } $$
where the sup runs over all $H$ and all  unital c.p. normal  maps $u: M \to B(H)$.
\item[{\rm (iii)'}]  For any finite set $(x_j)$ in $M$ we have
$$\sum \tau ( {x_j}^*   x_j)   \le   \sup\|\sum \ovl{u(x_j)} \otimes u(x_j) \|_{\ovl{B(H)} \otimes_{\min} B(H) } ,$$
where the sup runs over all $H$ and all    normal   $*$-homomorphisms $u: M \to B(H)$.
\end{thm}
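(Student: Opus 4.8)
The plan is to establish the cycle (i) $\Rightarrow$ (ii) $\Rightarrow$ (iii) $\Rightarrow$ (i), and to treat (iii)$'$ as the tracial specialization of (iii), following as closely as possible the template of Theorem \ref{ot1} and of the classical chain ``semidiscrete $\Leftrightarrow$ injective''. The one genuinely new feature compared with Theorem \ref{ot1} is that every map produced must be \emph{completely} positive, not merely positive; the mechanism for this is that in the decisive direction (iii) $\Rightarrow$ (i) we are free to realize $M$ through a normal unital $*$-homomorphism, and compressions of $*$-homomorphisms are automatically completely positive.

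For (i) $\Rightarrow$ (ii) I would first pass from the matricial weak$^*$ CPAP to a single factorization. Exactly as in the argument for (ii) $\Rightarrow$ (iii) of Theorem \ref{ot1} (the completely positive version of Proposition \ref{fac}), setting $u(x)=(u_\a(x))_\a\in\ell_\infty(\{M_{n(\a)}\})$ and $v=\text{weak}^*\lim_{\cl U}v_\a p_\a$ yields a unital c.p.\ normal injective $u:M\to B(H)$ and a unital c.p.\ $v:B(H)\to M$ with $vu=Id_M$, so that $u^{-1}_{|u(M)}=v_{|u(M)}$ is c.p., hence positive. The inequality then combines two facts: since $v$ is unital c.p.\ we have $\|v\|_{cb}=1$, so by Remark \ref{bla3} the map $\ovl v\otimes v$ is contractive for the minimal norms, giving $\|\sum\ovl{x_j}\otimes x_j\|_{\min}\le\|\sum\ovl{u(x_j)}\otimes u(x_j)\|_{\min}$; and the matricial weak$^*$ CPAP forces \eqref{ff7} by Theorem \ref{bla}, so the minimal norm on the left may be replaced by the maximal one. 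The implication (ii) $\Rightarrow$ (iii) is then immediate from Remark \ref{cy2}, since the $u$ of (ii) is one competitor in the supremum.

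The heart of the matter is (iii) $\Rightarrow$ (i). First I would identify the supremum: for any unital c.p.\ normal $u$, Remark \ref{bla3} gives $\|\sum\ovl{u(x_j)}\otimes u(x_j)\|_{\min}\le\|\sum\ovl{x_j}\otimes x_j\|_{\min}$, while a normal unital $*$-homomorphic embedding attains equality (injective $*$-homomorphisms preserve the minimal norm); hence the right-hand side of (iii) is exactly $\|\sum\ovl{x_j}\otimes x_j\|_{\ovl M\otimes_{\min}M}$, and (iii) reads $\sum\langle\xi,Jx_jJx_j\xi\rangle\le\|\sum\ovl{x_j}\otimes x_j\|_{\min}$. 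Fixing such a $*$-homomorphic embedding with infinite multiplicity, the Pietsch-type argument of Proposition \ref{oo2} (as used to obtain \eqref{d30}) produces a net $(h_\a)$ in $C_+$ and an ultrafilter $\cl U$ with $s(x,x)=\langle\xi,JxJx\xi\rangle\le\lim_{\cl U}\tr(x^*h_\a x h_\a)$ for all $x\in M$. The form $s'(y,x)=\lim_{\cl U}\tr(y^*h_\a x h_\a)$ is separately normal, bipositive and normalized, and dominates $s$ on the diagonal; by the Woronowicz--Connes rigidity of self-polar forms (Theorem \ref{wc}) this forces $s'=s$, i.e.\ $\langle\xi,JyJx\xi\rangle=\lim_{\cl U}\tr(u(y)^*h_\a u(x)h_\a)$ for all $x,y$. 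From this identity I would build the approximating net verbatim as in the standard-form part of Theorem \ref{ot1}: compress to finite-rank projections $P_\a$ with $h_\a=P_\a h_\a P_\a$, set $u_\a(x)=P_\a u(x)P_\a$, perturb by $\vp\phi$ to gain faithfulness, invoke Corollary \ref{rc2} for the returning maps $v_\a$, and conclude $v_\a u_\a\to Id_M$ weak$^*$ using the Powers--St\o rmer estimate $\|\xi_{\a,\vp}-\xi\|\to0$ and the totality of $\{\phi_x\}$ in $\ovl{M_*}$.

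The decisive gain over Theorem \ref{ot1}, and the step I expect to be the main obstacle, is upgrading ``positive'' to ``completely positive'' in this last construction. Because $u$ is now a $*$-homomorphism, $u_\a(x)=P_\a u(x)P_\a$ is completely positive and unital. For $v_\a$ one checks that $b\mapsto h_\a b h_\a$ is completely positive and that the predual of the normal $*$-homomorphism $u$ is completely positive on the relevant cones, so that the composite $b\mapsto u_*(h_\a b h_\a)\in M_*\cong M$ defining $v_\a$ is completely positive; this needs a matricial refinement of Corollary \ref{rc2} compatible with the $L_1$--$M$ identification, which is the one point deserving genuine care. Finally, (iii) $\Leftrightarrow$ (iii)$'$ in the tracial case is formal: with $\xi$ the trace vector one has $\langle\xi,Jx_jJx_j\xi\rangle=\tau(x_j^*x_j)$, and both suprema equal $\|\sum\ovl{x_j}\otimes x_j\|_{\min}$ (the $*$-homomorphism supremum because embeddings preserve the minimal norm, the c.p.\ supremum by the computation above), so the two inequalities coincide.
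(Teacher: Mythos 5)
Your proposal is correct and follows essentially the paper's own route. Indeed, the paper's entire proof of this theorem consists of the remark that it ``can be proved exactly in the same way as for Theorem \ref{n2}'', and the proof of Theorem \ref{n2} is precisely your cycle: a glued factorization plus a tensor-norm estimate for (i) $\Rightarrow$ (ii); Remark \ref{cy2} for (ii) $\Rightarrow$ (iii) and (ii) $\Rightarrow$ (iii)$'$; and, for (iii) $\Rightarrow$ (i) (resp.\ (iii)$'$ $\Rightarrow$ (i)), reduction to a single map of infinite multiplicity, the Pietsch argument of Proposition \ref{oo2}, the rigidity of self-polar forms (Theorem \ref{wc}), and the standard-form construction from the proof of Theorem \ref{ot1}. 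Your two deviations are harmless: in (i) $\Rightarrow$ (ii) the paper does not pass through Theorem \ref{bla} and \eqref{ff7}, but applies \eqref{eoo3'} directly to the unital c.p.\ map $v$ (max--max positivity of $\ovl v\otimes v$ plus \eqref{eb}), which is more self-contained; and in (iii) $\Rightarrow$ (i) the paper's single $u$ is the direct sum of \emph{all} competing unital c.p.\ normal maps, hence u.c.p.\ but not multiplicative, whereas your choice of a $*$-homomorphic embedding is legitimate (the sup equals the min norm, as you observe) and makes the complete positivity bookkeeping more transparent.

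The step you flag as ``the one point deserving genuine care'' --- a matricial refinement of Corollary \ref{rc2} --- is exactly what the paper's ``exactly in the same way'' leaves implicit, and it is the only genuine difference between the CPAP and PAP statements; your proof closes once it is supplied, and it does hold. Since $\xi$ is cyclic for $M$ and $JW(b)J\in M'$, the defining identity $\theta(\ovl b)(x)=\langle \xi, JW(b)Jx\xi\rangle$ gives $\sum_{ij}\langle x_i\xi,\, JW(b_{ij})J\, x_j\xi\rangle=\sum_{ij}\theta(\ovl{b_{ij}})(x_i^*x_j)$, and conjugating the matrix $[W(b_{ij})]$ by the antiunitary $J\oplus\cdots\oplus J$ shows that $W$ is c.p.\ if and only if the functional $\Theta(\ovl b\otimes x)=\theta(\ovl b)(x)$ satisfies $\Theta(t^*t)\ge 0$ for all $t$ in the $*$-algebra $\ovl B\otimes M$. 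In your situation $\Theta=\tilde\Theta\circ(Id_{\ovl B}\otimes u)$, where $\tilde\Theta(\ovl b\otimes c)=\tr(b^*h_\a c h_\a)$ is a vector state for the $*$-representation $\ovl b\otimes c\mapsto(\eta\mapsto c\eta b^*)$ on Hilbert--Schmidt operators, and $Id_{\ovl B}\otimes u$ is positive for the maximal norms because $u$ is c.p.\ (a fortiori for your $*$-homomorphic $u$); hence $\Theta(t^*t)\ge0$, the $\vp$-perturbation only adds a positive functional of product type, and the returning maps $v_{\a,\vp}$ are c.p.\ as required.
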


This statement can be proved exactly in the same way as
for Theorem \ref{n2} below.

\begin{rem} By what precedes
 for $M$ finite equipped with $\tau$:\\
\eqref{f3}  holds if and only if 
for any finite set $(x_j)$ in $M$ we have
\begin{equation}\label{f5} \tau ( \sum {x_j}^*   x_j)   \le  \|\sum  \ovl{x_j} \otimes x_j \|_{\ovl M \otimes_{\min} M} .\end{equation}
We would like to indicate how this was originally
derived from Connes's ideas.
The usual representation by  left multiplication
of $M$ on $L_2(\tau)$ is a standard form for $M$. 
 Assuming $\tau(1)=1$,  we may take $\xi=1$.
 Then  \eqref{f3} holds
if and only if 
for any finite sets $(x_j)$, $(y_j)$ in $M$ we have
$$| \tau ( \sum {y_j}^*   x_j)  | \le  \|\sum \ovl{y_j} \otimes x_j \|_{\ovl M \otimes_{\min} M} .$$
In \cite{Co}, Connes observed that
if $M$ is a finite factor  then for \eqref{f3} to hold
it suffices that it holds for $x_j$ unitary and $x_j=y_j$.
The route he used went through the equivalence with injectivity.
The condition becomes $n\le \|\sum_1^n \ovl{x_j} \otimes x_j\|_{\min}$ for all finite sets 
of unitaries $(x_j)$, and now of course the converse inequality trivially holds. 
By a reasoning used commonly for  means on amenable groups, this
  leads to a net $(h_\a)$
of Hilbert-Schmidt operators with $\|h_\a\|_2^2=\tr |h_\a|^2=1$ such that
$\|h_\a -x h_\a x^*\|_2 \to 0$ for all unitary $x$.
Then Connes introduced the functional  $\Phi$ defined on $B(H)$ by $\Phi(T)=\lim_\cl U \tr(h_\a^* T h_\a)$ (that he called a ``hypertrace") from which
injectivity can be derived.
 By the same route, the restriction to factors was removed  in \cite[Lemma 2.2]{Ha}
by
Haagerup who proved that it suffices for \eqref{f3}
to hold for all $(x_j)$ of the form $(U_j q)$
with $(U_j)$ unitaries and $q\not=0$ an arbitrary central projection, as follows
\begin{equation}\label{f4}   n \le \|\sum  \ovl{U_j q} \otimes {U_j q} \|_{\ovl M \otimes_{\min} M} .\end{equation}
It is easy to deduce from this the following claim (for $M$ finite equipped with $\tau$):\\
\eqref{f3}   holds if and only if 
for any finite set $(x_j)$ in $M$ we have
\begin{equation}\label{f5} \tau ( \sum {x_j}^*   x_j)   \le  \|\sum  \ovl{x_j} \otimes x_j \|_{\ovl M \otimes_{\min} M} .\end{equation}
Indeed, if this holds then we must have
$ \tau (q)  n  \le  \|\sum  \ovl{U_j q} \otimes {U_j q} \|_{\ovl M \otimes_{\min} M} ,$
but then applying the same to  $(\sum  \ovl{U_j q} \otimes {U_j q})^m$
we  get 
$\tau (q)  n^m \le  \|\sum  \ovl{U_j q} \otimes {U_j q} \|^m_{\ovl M \otimes_{\min} M} ,$
and after taking the $m$-th root and letting $m\to \infty$ we recover \eqref{f4}. 
 \end{rem}

The analogue of the preceding statement for the  PAP 
in place of the CPAP is as  follows:

\begin{thm}\label{n2}  Let $M$ be a  von Neumann algebra.  
The following are equivalent,
where in   (iii) we assume $M$ $\sigma$-finite  in standard form,
and in (iii)' we assume  $M$ finite with  $\tau$ as before.
\item[{\rm (i)}] $M$ has the matricial weak* PAP.
\item[{\rm (ii)}] There is a unital positive normal injective linear map $u: M \to B(H)$ such that $u^{-1}_{|u(M)}:u(M) \to M$ is positive 
and 
for any finite set $(x_j)$ in $M$ we have
$$\|\sum \ovl{x_j} \otimes x_j \|_{\ovl M \otimes_{\max} M} \le   \|\sum \ovl{u(x_j)} \otimes u(x_j) \|_{\ovl{B(H)} \otimes_{\min} B(H)} .$$
 
\item[{\rm (iii)}] There is a faithful normal state $\phi$ with associated
unit vector $\xi \in P^\natural$   such that
$$|\sum \langle \xi, Jx_j Jx_j \xi \rangle | \le \sup  \|\sum \ovl{u(x_j)} \otimes u(x_j) \|
_{\ovl{B(H)} \otimes_{\min} B(H)} $$
where the sup runs over all $H$ and all  unital positive normal  maps $u: M \to B(H)$.
\item[{\rm (iii)'}] 
For any finite set $(x_j)$ in $M$ we have
$$\sum \tau ( {x_j}^*   x_j)   \le   \sup\|\sum \ovl{u(x_j)} \otimes u(x_j) \|_{\ovl{B(H)} \otimes_{\min} B(H) },$$
where the sup runs over all $H$ and all  unital positive normal  maps $u: M \to B(H)$.
\end{thm}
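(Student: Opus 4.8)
The plan is to run the cycle (i) $\Rightarrow$ (ii) $\Rightarrow$ (iii) $\Rightarrow$ (i), treating (iii)$'$ as the specialization of (iii) to the finite case: there the standard form is the GNS construction of $\tau$ as in Remark \ref{rebla}, the separating unit vector is $\xi=\hat 1$, and $s(x,x)=\langle\xi,JxJx\xi\rangle=\tau(x^*x)$, so (iii) and (iii)$'$ literally coincide. Wherever (iii)/(iii)$'$ are used I would first reduce to the $\sigma$-finite case exactly as in Remark \ref{rf2}; the equivalence (i) $\Leftrightarrow$ (ii) needs no such assumption.

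The two easy directions are short. For (i) $\Rightarrow$ (ii): by Theorem \ref{ot1} the matricial weak$^*$ PAP makes $M$ seemingly injective, so there are $u\colon M\to B(H)$ normal, unital, positive (hence isometric and injective) and $v\colon B(H)\to M$ unital c.p.\ with $vu=Id_M$, whence $u^{-1}_{|u(M)}=v_{|u(M)}$ is positive. Since $v$ is unital c.p.\ and $B(H)$ has the WEP, the last assertion of Remark \ref{roo1} (via \eqref{eb}) gives, for every finite $(x_j)$ and $b_j=u(x_j)$, the bound $\|\sum\ovl{x_j}\otimes x_j\|_{\ovl M\otimes_{\max}M}=\|\sum\ovl{v(b_j)}\otimes v(b_j)\|_{\max}\le\|\sum\ovl{u(x_j)}\otimes u(x_j)\|_{\ovl{B(H)}\otimes_{\min}B(H)}$, which is (ii). Then (ii) $\Rightarrow$ (iii) is immediate from Remark \ref{cy2}: $|\sum\langle\xi,Jx_jJx_j\xi\rangle|\le\|\sum\ovl{x_j}\otimes x_j\|_{\max}\le\|\sum\ovl{u(x_j)}\otimes u(x_j)\|_{\min}$, the particular $u$ of (ii) lying in the supremum of (iii).

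The substance is (iii) $\Rightarrow$ (i), and the first task is to \emph{linearize and consolidate} the supremum in (iii). For each unital positive normal $u\colon M\to B(\cl H)$ embedded with infinite multiplicity, $B(\cl H)$ has the WEP, so by \eqref{eb} and \cite[Lemma 22.22]{P6} one has, for self-adjoint $\sum\ovl{x_j}\otimes x_j$ (and $u$ positive, so adjoints are preserved), $\|\sum\ovl{u(x_j)}\otimes u(x_j)\|_{\min}=\sup_{h\in C_+}\sum\tr(h\,u(x_j)^*h\,u(x_j))$. Thus (iii) reads $\sum s(x_j,x_j)\le\sup_{u,h}\sum\tr(h\,u(x_j)^*h\,u(x_j))$. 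Applying the quadratic Hahn--Banach argument used in Proposition \ref{oo2} (namely \cite[Lemma A16]{P6}) over the index set of pairs $(u,h)$, and absorbing the resulting finite convex combinations into a single pair by the block-diagonal device $u^\sharp=\oplus_k u_k$, $h^\sharp=\oplus_k\lambda_k^{1/2}h_k$ (which annihilates the cross terms and preserves $\|h^\sharp\|_2=1$), I would produce a \emph{single} normal unital positive $u\colon M\to B(\cl H)$, a net $(h_\a)$ in $C_+$, and an ultrafilter $\cl U$ with $s(x,x)\le\lim_{\cl U}\tr(h_\a u(x)^*h_\a u(x))$ for all $x$. This is exactly the inequality \eqref{d30'} that drives the proof of Theorem \ref{ot1}, now reached with no reference to $v$. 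From here the argument transfers essentially verbatim: set $s'(y,x)=\lim_{\cl U}\tr(h_\a u(y)^*h_\a u(x))$, which is normalized ($s'(1,1)=\lim\|h_\a\|_2^2=1$) and bipositive (immediate from $h_\a\ge0$ and positivity of $u$); since $s\le s'$ on the diagonal and $s$ is self-polar by Theorem \ref{C}, Theorem \ref{wc} forces $s=s'$, giving the equality \eqref{d30b}. Then one forms $\theta_\a(\ovl b)(x)=\tr(b^*h_\a u(x)h_\a)$ on $\ovl{B(\cl H)}$, perturbs to make $\theta_{\a,\vp}(\ovl 1)$ faithful, obtains $\xi_{\a,\vp}\to\xi$ from the generalized Powers--St\o rmer inequality \cite[Lemma 2.10]{Hasta}, applies Corollary \ref{rc2} to get unital positive finite-rank $W_{\a,\vp}\colon B(\cl H)\to M$, and compresses $u$ to the range $H_\a$ of $h_\a$ to split $W_{\a,\vp}u=v_{\a,\vp}u_\a$ through $B(H_\a)\simeq M_{n(\a)}$; the equality $s=s'$ and the totality of $\{\phi_x\}$ in $\ovl{M_*}$ yield $v_{\a,\vp}u_\a(y)\to y$ weak$^*$, i.e.\ (ii), hence (i).

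The main obstacle I anticipate is precisely this consolidation step. Because (iii) furnishes its inequality only after a supremum over \emph{all} admissible $u$, there is no single map to feed into the standard-form machinery, unlike in Theorem \ref{ot1} where $u$ was given. The decisive point is that the supremum linearizes through \cite[Lemma 22.22]{P6} into the quadratic functionals $\tr(h\,u(\cdot)^*h\,u(\cdot))$, and that Hahn--Banach together with the infinite-multiplicity block-diagonal trick collapses the two running indices $(u,h)$ into one fixed $u$ with a varying net $(h_\a)$; once \eqref{d30'} holds for a single $u$, the role played by $v$ in Theorem \ref{ot1} evaporates (it served only to produce \eqref{d30'}), and the Woronowicz--Connes rigidity and the Corollary \ref{rc2} extraction carry over without change.
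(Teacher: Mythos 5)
Your proposal is correct and follows essentially the same route as the paper: (i) $\Rightarrow$ (ii) via Theorem \ref{ot1} together with \eqref{eoo3'}, (ii) $\Rightarrow$ (iii) via Remark \ref{cy2}, (iii)' treated as the tracial specialization of (iii), and (iii) $\Rightarrow$ (i) by consolidating the supremum over $u$ until the Pietsch-type argument of Proposition \ref{oo2} yields \eqref{d30''}, after which the proof of Theorem \ref{ot1} is invoked verbatim. The only divergence is bookkeeping in the consolidation step: the paper first forms the single embedding $x\mapsto \oplus u(x)$ over all normal unital positive $u$ (so the sup in (iii) becomes one min-norm) and only then runs Hahn--Banach over $h$, whereas your Hahn--Banach over pairs $(u,h)$ with block-diagonal merging still leaves $u=u_\a$ varying along the net rather than producing the ``single $u$'' you claim --- a harmless slip, since either one more direct sum (over the net index, placing each $h_\a$ in its own block) gives a genuinely single $u$, or one notes that the self-polar rigidity (Theorem \ref{wc}) and Corollary \ref{rc2} steps go through with varying $u_\a$ unchanged.
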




\begin{proof}
Assume (i).
By Theorem \ref{ot1}, $M$ is seemingly injective. Let $u,v$ be as
in \eqref{e1}. Then $vu(x_j)=x_j$ for any $x_j\in M$.
Therefore, since $v$ is here c.p.  \eqref{eoo3'} (with $u(x_j)$ in place of $x_j$)  implies (ii).
 (ii) $\Rightarrow$ (iii) 
   is clear by Remark \ref{cy2}.
 Assume (iii). 
 If we consider the single embedding $x\mapsto \oplus u(x)$ where
 the direct sum runs over all possible normal unital positive $u: M \to B(H)$
 (say with some bound on the cardinality of $H$)
 then we see that (iii) holds for a single $u$.
Furthermore, repeating that $u$ infinitely many times,
  we may assume that (iii) holds for some embedding $u$
  with infinite multiplicity.
 By the argument used in Proposition \ref{oo2} we have the situation described in
 \eqref{d30''}. The proof of Theorem \ref{ot1}
 shows that $M$ has the matricial weak*  PAP, and hence (i) holds.
In the finite case, the proof of Theorem \ref{ot1} shows that (iii)' $\Rightarrow$ (i) and (ii)
$\Rightarrow$ (iii)'  is clear by Remark \ref{cy2}.
 \end{proof}
 \begin{rem}\label{cou4}
 In sharp contrast with the c.p. case,
 we do not know whether the weak* PAP
 implies the matricial weak* PAP, or equivalently
 whether the weak* PAP implies QWEP.
\end{rem}

\section{Counterexamples}\label{cou}

In \cite{Sz} Szankowski proved that $B(H)$ fails the AP in Grothendieck's sense.
A fortiori, $B(H)^*$ fails MAP and hence $B(H)^{**}$
certainly fails the weak* PAP and hence is not seemingly injective.
Similarly $\bb B=(\oplus\sum M_n)_\infty$ is not seemingly injective.

We can derive from this the existence 
of finite factors (unfortunately not really too  ``concrete") that are not seemingly injective:

\begin{pro}\label{cou1} Let $\omega$ be a free ultrafilter on $\NN$.
The ultraproduct $\prod M_n/\omega$ and the ultrapower $R^\omega$
of the hyperfinite factor $R$ are not seemingly injective.
\end{pro}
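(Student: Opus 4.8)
The plan is to reduce both assertions to the \emph{failure of the weak* PAP} and then to contradict Szankowski's theorem \cite{Sz}, exactly as in the two preceding examples. First I would record that $\prod M_n/\omega$ and $R^\omega$ are QWEP: each $M_n$ (being finite dimensional) and $R$ (being hyperfinite) is injective, hence QWEP, and QWEP is stable under ultraproducts by Kirchberg's results (see \cite[Ch.~15]{P6}). Granting this, Theorem~\ref{ot1} (via (i)$\Leftrightarrow$(iii)) reduces the statement to showing that neither algebra has the weak* PAP, equivalently (by (i)$\Leftrightarrow$(ii)) the matricial weak* PAP. Since both are finite factors, I would attack this through the quantitative form supplied by Theorem~\ref{n2}(iii)$'$: for a finite factor $M$ with trace $\tau$, the matricial weak* PAP forces $\sum_j\tau(x_j^*x_j)\le \sup_u\|\sum_j \ovl{u(x_j)}\otimes u(x_j)\|_{\ovl{B(H)}\otimes_{\min}B(H)}$ for every finite family $(x_j)$, the supremum running over all normal unital positive $u:M\to B(H)$.

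Next I would argue by contradiction. If $\prod M_n/\omega$ (resp. $R^\omega$) were seemingly injective it would have the matricial weak* PAP, hence a factorization $Id=vu$ with $u$ normal unital positive and $v$ unital positive through an $\ell_\infty$-sum of matrix algebras $\ell_\infty(\{M_{k(\alpha)}\})$. The aim is to convert this into a violation of Szankowski's theorem, i.e. to manufacture from it an approximation of $Id_{B(H)}$ of the kind already shown impossible for $B(H)^{**}$ and for $\bb B=(\oplus\sum M_n)_\infty$. The natural quantitative target is to refute the inequality of Theorem~\ref{n2}(iii)$'$ for a suitable finite family $(x_j)$ in the factor, making $\sum_j\tau(x_j^*x_j)$ exceed $\sup_u\|\sum_j\ovl{u(x_j)}\otimes u(x_j)\|_{\min}$, with the $(x_j)$ extracted through the ultrafilter from Szankowski's obstruction living in the blocks $M_n$ (resp.\ in the matrix subfactors $M_{2^k}\subset R$).

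The hard part, and the reason the construction is delicate, is that the tracial ultraproduct \emph{collapses} precisely the structure on which Szankowski's construction lives: the matrix units $e^{(n)}_{ij}$ have trace $1/n\to0$ and so die in $\prod M_n/\omega$, while the families $(x_j)$ that survive are the ``macroscopic'' ones adapted to the diffuse type $II_1$ structure of the factor. Thus the central difficulty is to recover operator-norm — equivalently operator-space — approximation data for $B(H)$ from the purely tracial ($L_2$) information retained by the factor, that is, to bound the supremum over \emph{arbitrary} normal unital positive maps $u:M\to B(H)$ in Theorem~\ref{n2}(iii)$'$ by a quantity that Szankowski's theorem forces to be small. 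This is the step where the full operator-space strength of \cite{Sz} (rather than mere failure of the Banach-space AP) must be invoked, and it is where I expect the real work to lie; once it is carried out, the case of $R^\omega$ follows identically, with the matrix subfactors $M_{2^k}\subset R$ playing the role of the blocks $M_n$.
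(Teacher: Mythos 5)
Your reduction to QWEP plus failure of the weak* PAP is correct as far as it goes, but the proposal stops exactly where a proof would have to begin: you never produce a finite family $(x_j)$ violating the inequality of Theorem~\ref{n2}(iii)$'$, and you yourself identify the reason this is hard --- the matrix units $e^{(n)}_{ij}$ have trace $1/n$ and die in the tracial ultraproduct, so the structure on which Szankowski's construction lives is destroyed there. What remains is a plan whose central step is explicitly deferred (``where I expect the real work to lie''), with no mechanism offered for bounding the supremum over all normal unital positive maps $u:M\to B(H)$ by a quantity that \cite{Sz} controls. No such direct quantitative transfer of Szankowski's obstruction into a $II_1$ factor is carried out in the paper either; the obstacle you name is real, and your route does not get around it.

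The paper's proof goes in the opposite direction and sidesteps the issue entirely: instead of pushing Szankowski's obstruction \emph{into} the finite factor, it pulls the hypothetical good property \emph{out} of it. If $\cl M=\prod M_n/\omega$ or $R^\omega$ were seemingly injective, then by Remark~\ref{sup2} every von Neumann subalgebra admitting a unital positive projection would be too; since every finite QWEP (Connes embeddable) algebra with separable predual embeds into $\cl M$ trace-preservingly, hence with a conditional expectation, all such algebras would be seemingly injective. A bootstrap --- directed unions of finitely generated subalgebras for the non-separable finite case, compressions $p_\a M p_\a$ by finite projections for the semifinite case, Takesaki duality to reach general $\sigma$-finite algebras, and the structure theorem \eqref{stru} with Remark~\ref{sup1} in general --- then shows that \emph{every} QWEP von Neumann algebra would be seemingly injective. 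In particular $B(H)^{**}$ would be, and that is the one place where Szankowski applies directly: $B(H)$ fails the AP, so $B(H)^*$ fails the MAP and $B(H)^{**}$ fails the weak* PAP, hence is not seemingly injective. This contradiction finishes the proof without any quantitative statement inside the ultraproduct. If you want to salvage your approach, you would in effect have to reprove this transfer in the reverse (hard) direction, which is precisely what the paper's soft embedding argument avoids.
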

 \begin{rem}\label{sup1} Proposition \ref{fac}
  obviously remains valid if $\{B(H_\a)\}$ is replaced by a family
  $\{M_\a\}$  of seemingly injective von Neumann algebras.
  \end{rem}
    \begin{rem}\label{sup2}  This shows in particular that
    if $\cl M$ is seemingly injective then any von Neumann subalgebra
    $M\subset \cl M$ admitting a unital positive (surjective) projection $P: \cl M \to M$
    is also seemingly injective. Note that when $\cl M$ is finite  there is automatically 
    a conditional expectation and hence a u.c.p. projection $P: \cl M \to M$.
  \end{rem}
  \begin{proof}[Proof of Proposition \ref{cou1}]  
  Assume for contradiction  that if either $\cl M=R^\omega$ or $\cl M=\prod M_n/\omega$, the algebra $\cl M$ is    seemingly injective.
  We will show that
  this implies that  \emph{any} QWEP von Neumann algebra $M$ is seemingly injective.
  \\
    Any  QWEP (or ``Connes embeddable") finite  $M$ 
      with separable predual
  embeds in $\cl M$ and hence is seemingly injective by  Remark \ref{sup2}. 
  When $M$ is finite with  $M_*$   non separable, this remains true if $M$ is
  $\sigma$-finite, or equivalently admits a faithful normal tracial state.
  Indeed, we may then view $M$ as the directed union of a family of
  finitely generated subalgebras $\{M_\a\}$ (with conditional expectations
  $P_\a: M \to M_\a$). Each $M_\a$ has a separable predual.
  Then Remark \ref{sup1} (or Proposition \ref{fac}
  applied to  $Id_M$) shows that any $M$ which is QWEP, $\sigma$-finite
 and finite is seemingly injective.
  By Remark \ref{sup1} again, the same remains true 
  if finite is replaced by  semifinite (indeed, we can apply Proposition \ref{fac}
  with $T=Id_M$ and with $\{B(H_\a)\}$  replaced by a family
  $\{M_\a\}$  of  QWEP   \emph{finite} and  $\sigma$-finite algebras of the form $M_\a=p_\a M p_\a$ where $p_\a$ are suitably chosen \emph{finite} projections in $M$). By Takesaki's duality theorem (as described in \cite[Th.11.3]{P6}),
  any   $M$ embeds as a von Neuman subalgebra
  in some \emph{semifinite}
   $\cl M$ in such a way that there is a 
  c.p. contractive projection from $\cl M$ onto $M$, and moreover
  $\cl M$  embeds in $M$
  as a von Neumann subalgebra in such a way that there is a u.c.p. projection
  from $M$ to $\cl M$. 
If $M$ is QWEP  and $\sigma$-finite
so is $\cl M$.
  (The fact that $\cl M$ inherits QWEP from $M$
  follows e.g. from Remark \ref{kiki}.)  
  Thus our inital assumption implies that any  $\sigma$-finite
  (and QWEP)  $M$ is seemingly injective.
   Now let $M$ be arbitrary (and QWEP). By a classical structural theorem
   (see  \cite[Ch. III \S 1 Lemma 7] {Dix}  (p.224 in the French edition
and p. 291 in the English one)
   $M$ admits a decomposition as a direct sum 
\begin{equation}\label{stru}
M\simeq (\oplus\sum\nl_{i\in I} B({\cl H_i}) \bar\otimes  N_i )_\infty\end{equation}
where the $N_i$'s are   $\sigma$-finite (=countably decomposable),
 the ${\cl H_i}$'s are   Hilbert spaces and $\bar\otimes$ denotes the (von Neumann algebra sense) tensor product. Note that the $N_i$'s inherit QWEP from $M$.\\
We claim that, under our assumption, any   QWEP $M$
as in \eqref{stru}  is seemingly injective.
Using Remark \ref{sup1} as before we can reduce this
to the case when $I$
is a countable (or even  finite) set and   the ${\cl H_i}$'s are separable (or even finite dimensional).
In the latter case $M$ is   $\sigma$-finite, 
so the first part of the argument
completes the proof of the claim. \\
Since we know (by \cite{Sz}) that $M=B(H)^{**}$
contradicts this, our initial assumption does not hold.
  \end{proof}

\section{Remote injectivity}\label{sci}

We remind the reader that $M$ is called remotely injective
if we have a factorization as in \eqref{e1}
but where we only assume $u$ normal and isometric, and 
 $v$  completely contractive. 
Let us first observe that this implies that $M$ is QWEP.
In fact it suffices for this to assume that $u,v$ are both contractions. Indeed,
this implies that $ M^*$ embeds isometrically into $ B(H)^*$, and hence that $M_*$
is finitely representable in $B(H)_*$ and this implies by Kirchberg's results (see \cite[\S 15]{P6}) that $M$ is QWEP.

The next statement gives a nice sounding reformulation
   of remote injectivity in terms of maximal operator spaces.
   The latter were introduced by Blecher and Paulsen \cite{[BP1]} (see e.g.  \cite[\S 3]{P4} for more on this) .
   Given a Banach space $X$ the operator space $\max(X)$ is 
   characterized by the property that it is isometrically isomorphic to $X$
   and for any map $u: \max(X) \to B(H)$ we have $\|u\|_{cb}=\|u\|$.
   One way to produce a completely isometric realization of
   $\max(X)$ is like this: let $\cl C$ denote the collection of
   all contractions $v: X \to B(H_v)$ (with $H_v$ either of cardinality
   at most that of $X$ or simply finite dimensional) and then
   consider the embedding 
   $$J: X \to B( \oplus_{v\in \cl C} H_v) \text{  defined by  } x\mapsto \oplus _{v\in \cl C} v(x).$$
   Then $ \max(X) $ can be identified with $J(X)\subset B( \oplus_{v\in \cl C} H_v)$.
   
   \begin{pro} \label{p23/10} A von Neumann algebra $M$ is remotely injective
   if and only if the natural complete contraction (defined by the identity map on $M$)  
   $$\Phi: \max(M) \to M$$
   factors completely contractively through  $B(H)$ for some $H$.
   \end{pro}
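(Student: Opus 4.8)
The plan is to prove the equivalence by unwinding the two definitions, the only genuine work being the normalisation of the embedding in the ``if'' direction. For the ``only if'' direction, suppose $M$ is remotely injective, so that $Id_M=vu$ as in \eqref{e1} with $u:M\to B(H)$ a normal isometric contraction and $v:B(H)\to M$ completely contractive. I would simply reread this factorisation through $\max(M)$: since $u$ is a contraction on the Banach space $M$, the defining property of $\max$ makes $u:\max(M)\to B(H)$ completely contractive ($\|u\|_{cb}=\|u\|\le 1$). As $v$ is completely contractive and $vu=Id_M$ agrees, as a map of sets, with $\Phi$, this exhibits $\Phi=v\circ u$ as a completely contractive factorisation through $B(H)$. (Normality of $u$ is not even needed here.)

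Conversely, assume $\Phi=vu$ with $u:\max(M)\to B(H)$ and $v:B(H)\to M$ complete contractions. Forgetting the operator space structure of $\max(M)$, the map $u:M\to B(H)$ is an ordinary contraction, $v$ is completely contractive, and $vu=Id_M$. From $\|x\|=\|v(u(x))\|\le\|u(x)\|\le\|x\|$ one reads off that $u$ is automatically isometric, so the factorisation already has exactly the shape required for remote injectivity \emph{except} that $u$ need not be normal. The entire content of this direction is therefore to manufacture, from this data, a factorisation of $Id_M$ through some $B(K)$ in which the embedding is normal; this is the main obstacle.

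To produce a normal embedding I would pass to the bidual. Let $i_M:M\to M^{**}$ be the canonical inclusion, let $z\in M^{**}$ be the central projection with $zM^{**}\cong M$, and let $P:M^{**}\to M$ be the canonical normal projection (the adjoint of $M_*\hookrightarrow M^*$), which is completely contractive, normal, satisfies $Pi_M=Id_M$, and annihilates the singular part $(1-z)M^{**}$. Writing $\lambda:M\to zM^{**}$, $\lambda(x)=z\,i_M(x)$, for this normal $*$-isomorphism, I set
\[
\hat u:=u^{**}\circ\lambda:M\to B(H)^{**},\qquad v':=P\circ v^{**}:B(H)^{**}\to M.
\]
Both are normal, $\hat u$ is a contraction and $v'$ is completely contractive, and since $(vu)^{**}=Id_{M^{**}}$,
\[
v'\hat u(x)=P\,(vu)^{**}(z\,i_M(x))=P(z\,i_M(x))=x ,
\]
so $v'\hat u=Id_M$ with $\hat u$ normal, hence (as above) isometric. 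This realises $Id_M$ through the \emph{von Neumann algebra} $B(H)^{**}$, with a normal isometric embedding and a completely contractive left inverse.

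What remains — and where I expect the real difficulty to lie — is to replace the intermediate algebra $B(H)^{**}$ by a genuine $B(K)$. If $B(H)^{**}$ were injective this would be automatic, since an injective von Neumann algebra is completely contractively complemented in some $B(K)$; one would then compose $\hat u$ with the inclusion and $v'$ with the projection. However $B(H)^{**}$ is \emph{not} injective (by Choi--Effros that would force $B(H)$ to be nuclear), so the crux is to carry out the normalisation through an injective intermediate algebra, equivalently to obtain directly a normal isometric embedding of $M$ into some $B(K)$ admitting a completely contractive retraction onto its image. I would attack this using the normal/singular decomposition of $v$ (whose singular part vanishes on the compacts of $B(H)$) together with the structure theory of $1$-mixed injective dual operator spaces recalled in Remark \ref{mixed} and \cite{OR2}, arranging that the singular parts do not disturb the relation $vu=Id_M$; an alternative would be to present $Id_M$ as a pointwise weak\textsuperscript{*} limit of normal finite-rank factorisations and then invoke the assembling Proposition \ref{fac}, the delicate point there being once again to control the singular part of $v$ so that the limit remains $Id_M$.
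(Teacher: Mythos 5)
Your ``only if'' direction is correct (and coincides with the paper's one-line converse), and your reduction of the ``if'' direction is sound as far as it goes: $u$ is indeed automatically isometric, and your bidual construction $\hat u=u^{**}\lambda$, $v'=Pv^{**}$ is a valid normal isometric factorization of $Id_M$ through $B(H)^{**}$. But the proof stops exactly where the proposition still has to be proved: $B(H)^{**}$ is not of the form $B(K)$, and none of the repairs you sketch can close this gap. If you embed $B(H)^{**}$ normally into some $B(K)$, producing a completely contractive map $B(K)\to M$ restricting to $v'$ requires either extending $v'$ off the copy of $B(H)^{**}$ (which needs the \emph{target} $M$ to be injective) or precomposing with a completely contractive projection onto that copy (which needs $B(H)^{**}$ to be injective -- false, as you note). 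Moreover, completing your route by factoring $Id_{B(H)^{**}}$ itself through some $B(K)$ amounts to remote injectivity of $B(H)^{**}$, which the paper conjectures is \emph{false} (see Remark \ref{rrr} and \S\ref{sq}); and the normal/singular decomposition of $v$ has no reason to preserve the relation $vu=Id_M$. So the final paragraph is a genuine missing step, not a technical verification, and the approaches proposed there point in directions that cannot work.

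The idea you are missing is that the weak*-realization should be applied to $\max(M)$ itself, and that the injectivity/extension step must be performed on the side of $u$, whose target $B(H)$ is the only injective object available. Since $M$ is a dual Banach space, $\max(M)$ is a dual \emph{operator} space: it is the operator space dual of $\min(M_*)$ in the sense of \cite{[BP1]}. Hence (see \cite[Lemma 1.4.7]{BLM}) there is a completely isometric embedding $U:\max(M)\to B(\cl H)$ onto a weak*-closed subspace $X$, and by Sakai's uniqueness-of-predual theorem $U$, viewed as a map on $M$, is a normal isometry. Now $uU^{-1}:X\to B(H)$ is a complete contraction into the injective algebra $B(H)$, so it extends to a complete contraction $\tilde u:B(\cl H)\to B(H)$; setting $V:=v\tilde u$ gives $\|V\|_{cb}\le 1$ and $VU=vu=Id_M$, which is exactly remote injectivity. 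This one-step argument makes your bidual detour unnecessary: normality is obtained for free from the weak*-closed realization of $\max(M)$, rather than repaired after the fact.
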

   \begin{proof}  Assume there are  $u,v$    
 such that $\Phi= vu$ with
   $\|u: \max(M) \to B(H)\|_{cb}=\|u\|=1$ and $\|v\|_{cb}=1$.
   Since $M$ is a dual space (i.e. $M=(M_*)^*$)
 the o.s. $\max(M)$ is a dual o.s.
 (it is the dual of the o.s. $\min(M_*)$ in the sense of \cite{[BP1]}).
 It follows (see e.g. \cite[Lemma 1.4.7 p. 23]{BLM}) that there is a completely isometric embedding
 $U: \max(M)\to  B(\cl H)$  realizing $\max(M)$ as a weak*-closed subspace.
 Let $X=U(\max(M))$ be the range of $U$.
 Then   $M$ is isometric to $X$ and by Sakai's uniqueness of predual  theorem
$U$ is normal.
 By the injectivity of $B(H)$ there is a
 map $\tilde u:  B(\cl H)\to B(H)$ with $\|  \tilde u\|_{cb}=\|   u\|_{cb}$
 extending $u$.
 Now if we let $V=v\tilde u: B(\cl H) \to M$
 we obtain $Id_M=VU$ with $V,U$ satisfying the conditions
 required to make $M$ remotely injective.
 This proves the if part. 
   The converse is 
  obvious.
\end{proof}

Ozawa   connected in \cite{Ozth} and \cite{Ozllp}
   the lifting problems described
  in Remark \ref{pb} with maximal operator spaces
  in the sense of Blecher and Paulsen \cite{[BP1]}  and local reflexivity in the sense of Effros and Haagerup (see \cite{ER} or \cite[\S 18]{P4}). Within operator spaces, it is convenient  to introduce constants
  relative to these properties, as follows.
  
  \begin{dfn}An operator space
   (o.s. in short)  $X\subset B(H)$ is called $\lambda$-maximal
    if  any bounded linear map $u: X \to Y$
  into an arbitrary operator space $Y$
  is completely bounded and satisfies $\|u\|_{cb} \le \lambda \|u\|$.
  \end{dfn}
  
  For example for any Banach space $X$ the o.s. $\max(X)$ described
above  is $1$-maximal.
  
  \begin{dfn}
  An o.s. $X\subset B(H)$ is called $\lambda$-locally reflexive
  if for any finite dimensional (f.d. in short)
subspace $E\subset X^{**}$ there is a net of maps
$u_\a: E \to X$ with $\|u_\a\|_{cb} \le \lambda$ that tend
pointwise-weak* to the inclusion $E\subset X^{**}$.
\end{dfn}
  
 To make the link  with Ozawa's questions,
we will use the following simple   lemma.
Whether an analogous lemma
 is valid with $Y$ finite dimensional (possibly in some  variant involving factoring
 the inclusion $E \subset X$ through $Y$) is the central open question
 discussed by Oikhberg in \cite{[O3]}.
 It is closely linked to Ozawa's question whether all 
 maximal spaces are locally reflexive.

 \begin{lem}\label{os1} Let $E\subset X$ be a  f.d. (or merely separable) subspace of a maximal
o.s. There is a  separable maximal subspace $Y $,
  such that  $E\subset Y \subset X$.
\end{lem}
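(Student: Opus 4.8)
The plan is to translate everything into a statement about the maximal functor on Banach spaces and then run a Löwenheim--Skolem (separable-closure) construction inside $X_0$, where $X_0$ denotes the Banach space underlying $X$. Since $X$ is maximal we have $X=\max(X_0)$ completely isometrically, so for every $n$ and every $z=[z_{ij}]\in M_n(X)$ the inherited norm satisfies $\|z\|_{M_n(X)}=\|z\|_{M_n(\max X_0)}$. A separable subspace $Y\subset X$, with its inherited operator space structure, is then maximal precisely when the formal complete contraction $\max(Y_0)\to X$ induced by the Banach inclusion $Y_0\hookrightarrow X_0$ is a complete isometry, i.e. when $\|z\|_{M_n(\max Y_0)}=\|z\|_{M_n(X)}$ for all $n$ and all $z\in M_n(Y_0)$. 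So it suffices to produce a separable $Y_0$ with $\overline E\subset Y_0\subset X_0$ enjoying this equality.

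Two facts drive the construction. First, monotonicity: if $A_0\subset B_0\subset X_0$ and $z\in M_n(A_0)$, then every contraction $B_0\to B(K)$ restricts to one on $A_0$, whence $\|z\|_{M_n(\max B_0)}\le \|z\|_{M_n(\max A_0)}$; in particular $\|z\|_{M_n(X)}\le\|z\|_{M_n(\max Y_0)}$ for any $Y_0\subset X_0$. Second --- and this is the technical heart --- the maximal norm is finitely determined: by the factorization (projectivity) formula for $\max$ (see \cite{[BP1]} and \cite[\S 3]{P4}), $\|z\|_{M_n(\max X_0)}$ is an infimum over representations $z_{ij}=\sum_{k=1}^{m}\alpha_{ik}\,y_k\,\beta_{kj}$ with finitely many $y_k\in X_0$ and scalar matrices $\alpha,\beta$. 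Consequently, given $z\in M_n(X_0)$ and $\varepsilon>0$, choosing a near-optimal such representation and adjoining the finitely many vectors $y_1,\dots,y_m$ to any subspace $Z_0$ containing the entries $z_{ij}$ produces a subspace $Z_0'\subset X_0$, still separable, with $\|z\|_{M_n(\max Z_0')}\le\|z\|_{M_n(X)}+\varepsilon$.

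With these in hand I would build an increasing chain $\overline E=Z_0^{(0)}\subset Z_0^{(1)}\subset\cdots$ of separable subspaces of $X_0$ as follows. At each stage fix a countable set $D_k$ of matrix tuples that is dense, in the norm $\|\cdot\|_{M_n(\max Z_0^{(k)})}$ and over all $n$, in $\bigcup_n M_n(Z_0^{(k)})$; enumerate all pairs $(z,m)$ with $z$ drawn from the $D_k$ accumulated so far and $m\ge 1$; and when processing $(z,m)$ adjoin the finitely many vectors supplied by the previous paragraph so that $\|z\|_{M_n(\max Z_0^{(k+1)})}\le\|z\|_{M_n(X)}+1/m$. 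Only finitely many vectors are added at each of countably many steps, so $Y_0:=\overline{\bigcup_k Z_0^{(k)}}$ is separable and contains $\overline E$.

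It then remains a squeeze. For every $z\in\bigcup_k D_k$, monotonicity gives $\|z\|_{M_n(X)}\le\|z\|_{M_n(\max Y_0)}\le\inf_k\|z\|_{M_n(\max Z_0^{(k)})}$, while the construction forces the infimum on the right to equal $\|z\|_{M_n(X)}$; hence $\|z\|_{M_n(\max Y_0)}=\|z\|_{M_n(X)}$ on $\bigcup_k D_k$, which is dense in each $M_n(Y_0)$ for $\|\cdot\|_{M_n(\max Y_0)}$. Since $\|\cdot\|_{M_n(X)}\le\|\cdot\|_{M_n(\max Y_0)}$ always, this equality passes to all of $M_n(Y_0)$ by continuity, so $\max(Y_0)\to X$ is completely isometric and $Y:=Y_0$ is the desired separable maximal subspace. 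The main obstacle is the second paragraph: pinning down that the maximal norm of a fixed tuple is witnessed by finitely many ambient vectors (the factorization formula), together with verifying that the chosen countable sets are genuinely dense for the inherited maximal norm so that the final continuity step is legitimate.
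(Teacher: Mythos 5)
Your proposal is correct and takes essentially the same route as the proof the paper relies on: the paper proves this lemma by citing \cite[Lemma 3.4]{Ozllp} and the solution of Exercise 3.8 in \cite{P4}, which it describes as ``based on Paulsen's description of the unit ball of $M_n(X)$ when $X$ is maximal'' (\cite{[Pa5]}), and your argument is exactly that: Paulsen's finite factorization formula for the matrix norms of $\max(X_0)$, fed into a standard separable-closure chain construction, with the monotonicity and density/continuity checks done correctly. You have simply written out in full the details that the paper delegates to its references.
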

\begin{proof} This is stated as Lemma 3.4 in \cite{Ozllp}. For a detailed proof see
  Exercise 3.8 in \cite[p. 80]{P4}, with solution in  \cite[p. 429]{P4}.
The latter is based on Paulsen's description of
the unit ball of $M_n(X)$ when $X$ is maximal (see \cite{[Pa5]} or e.g. \cite[p. 72]{P6}).
This lemma can also be checked   by a routine duality argument.
\end{proof}

An o.s. $X$ has the OLLP if any complete contraction
$u: X \to C/\cl I$ into an arbitrary quotient $C^*$-algebra
is locally liftable in the following sense: for any f.d. subspace $E\subset X$ there 
is a complete contraction $u^E: E \to C$ that lifts $u_{|E}$.

Let us say that an o.s. $X$ is strongly $\lambda$-maximal if  $X$ is the union of an increasing net of f.d.
$\lambda$-maximal subspaces $E_i$  such that $X=\ovl{\cup E_i}$.
 In \cite{Ozth} and \cite{Ozllp}
Ozawa observed that  
any strongly $1$-maximal $X$ has the OLLP.
Motivated by this observation,    
he asked whether any $1$-maximal 
o.s. has the OLLP.  
By Lemma \ref{os1}, the question reduces immediately to the 
case when $X$ is separable.
We think that the answer is negative
and  that there are   $1$-maximal 
o.s. that are not strongly $\lambda$-maximal for any $1\le \lambda<\infty$.
 In fact the space $X=\max(B(\ell_2)^{**})$ is a natural candidate for a counterexample.
 This question is related to our main topic via the following.

 \begin{pro} Let $M$ be a QWEP von Neumann algebra. If  $\max(M)$  
 has the  OLLP then  $M$ is
 remotely injective.
   \end{pro}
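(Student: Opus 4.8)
The plan is to invoke Proposition~\ref{p23/10}, which reduces remote injectivity of $M$ to a single requirement: that the canonical complete contraction $\Phi:\max(M)\to M$ factor completely contractively through some $B(K)$. Thus the entire task is to manufacture such a factorization out of the OLLP hypothesis together with QWEP. The key realization is that OLLP is a statement about lifting complete contractions \emph{along quotient $C^*$-maps}, so one must first present $M$ as a quotient $C^*$-algebra in a way that remembers the WEP-type data; this is exactly what Lemma~\ref{23/10} supplies.

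First I would extract the quotient structure from QWEP. By Lemma~\ref{23/10} there is a unital $C^*$-algebra $W$ and a surjective unital $*$-homomorphism $q:W\to M$ admitting a factorization $q=w_2w_1$ with $w_1:W\to B(H)$ a unital $*$-homomorphism and $w_2:B(H)\to M$ unital c.p. In particular $M\cong W/\ker q$ is a quotient $C^*$-algebra, and $\Phi:\max(M)\to M$ is a complete contraction into this quotient. Hence the OLLP of $\max(M)$ applies: for every finite-dimensional $E\subset M$ there is a complete contraction $u^E:E\to W$ with $qu^E=\iota_E$, where $\iota_E$ denotes the inclusion $E\hookrightarrow M$. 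Composing with $w_1$ gives complete contractions $w_1u^E:E\to B(H)$ with $w_2(w_1u^E)=qu^E=\iota_E$, so locally the inclusion of $E$ into $M$ factors through $B(H)$ with the \emph{fixed} second leg $w_2$.

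The remaining work is to glue these local factorizations into a global one. Since $B(H)$ is injective I would first extend each $w_1u^E$ along the completely isometric inclusion $E\subset\max(M)\subset B(\mathcal H)$ to a complete contraction $\hat u^E:\max(M)\to B(H)$ with $\hat u^E|_E=w_1u^E$; extension alters $\hat u^E$ only off $E$, so $w_2\hat u^E(x)=x$ whenever $x\in E$. Putting $B=\bigoplus_E B(H)$, the von Neumann $\ell_\infty$-direct sum over the directed set of finite-dimensional subspaces, and $a(x)=(\hat u^E(x))_E$, the map $a:\max(M)\to B$ is a complete contraction. Choosing an ultrafilter $\omega$ refining the order filter and setting $b={\rm weak}^\ast\text{-}\lim_\omega w_2 p_E$ (with $p_E$ the coordinate projections) yields, exactly as in the proof of Proposition~\ref{fac}, a unital c.p. map $b:B\to M$; since $x\in E$ holds $\omega$-eventually one gets $ba(x)={\rm weak}^\ast\text{-}\lim_\omega w_2\hat u^E(x)=x$, i.e. $ba=\Phi$. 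Finally $B$ is injective and, again as in Proposition~\ref{fac}, its identity factors through $B(K)$ with $K=\bigoplus_E H$ via unital c.p. normal maps, namely the inclusion $\iota:B\to B(K)$ and a normal conditional expectation $\mathcal E:B(K)\to B$ with $\mathcal E\iota=\mathrm{Id}_B$. Then $\Phi=(b\mathcal E)(\iota a)$ factors completely contractively through $B(K)$, and Proposition~\ref{p23/10} delivers remote injectivity.

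I expect the main obstacle to be conceptual rather than computational, concentrated in the first two paragraphs: one cannot apply OLLP to a putative factorization through $B(H)$ directly, because $M$ is not presented as a $C^*$-quotient of $B(H)$ in a compatible way, so routing the lift through the $C^*$-algebra $W$ from Lemma~\ref{23/10} is essential, and it is precisely the QWEP factorization $q=w_2w_1$ that makes the local lifts $u^E$ land, after composition with $w_1$, in the injective algebra $B(H)$ with a common second leg $w_2$. Once that is set up, globalizing via injectivity of $B(H)$ and averaging the second legs through an ultralimit over a direct sum is routine; the only points to verify carefully are that the direct-sum map $a$ is genuinely a complete contraction and that the identity $w_2\hat u^E|_E=\iota_E$ survives the extension, both of which are immediate.
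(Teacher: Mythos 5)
Your proof is correct and follows essentially the same route as the paper's: Lemma \ref{23/10} for the QWEP presentation $q=w_2w_1$, OLLP local lifts $u^E:E\to W$, injectivity of $B(H)$ to extend $w_1u^E$, and an ultrafilter limit over the net of finite dimensional subspaces to glue. The only difference is packaging — the paper extends each local map to all of $B(\cl H)$ and takes the weak* ultralimit there, obtaining $Id_M=VU$ directly against the definition of remote injectivity, whereas you bundle the extensions into an $\ell_\infty$-direct sum and hand the normality issue back to Proposition \ref{p23/10}; both are sound.
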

   \begin{proof} 
   As explained in the proof of Proposition \ref{p23/10} we may assume that
   we have a normal isometric embeding $U: M \to B(\cl H)$ realizing
   $\max(M) $ as a weak* closed subspace of $B(\cl H)$.
   For simplicity we identify  $\max(M)  $ with $U( M )$. 
   Let $W,q,w_1,w_2$ be as in Lemma \ref{23/10}.
   Let $u=\Phi: \max(M) \to M$.
   Let $E\subset \max(M)$ be f.d.
   By the OLLP there is $u^E: E \to W$ with $\| u^E \|_{cb} \le 1$
   such that $qu^E=u_{|E}$.
   By the injectivity of $B(H)$ the map $w_1 u^E: E \to B(H)$ admits an extension
   $v^E : B(\cl H) \to B(H)$ with $\| v^E \|_{cb} \le 1$. Then the map
   $w^E= w_2 v^E: B(\cl H) \to M$ is such that
   $w^E(x)=x$ for any $x\in E$  and $\|w^E\|_{cb} \le 1$.
   Let $\cl U$ be an ultrafilter refining the net of finite dimensional subpaces
   $\{E\mid E\subset M\}$. We define $V: B(\cl H) \to M$ by
   $$\forall b\in B(\cl H)\quad V(b)=\text{weak*}\lim\nl_\cl U w^E(b).$$
   Then $V(b)\in M$, $V(x)=x$ for any $x\in M$, so that $Id_M=VU$ and $\|V\|_{cb} \le 1$.
     \end{proof}

Let us say that a surjective $*$-homomorphism $q: W \to M$ 
(here between $C^*$-algebras) admits
 contractive
(resp. bounded) liftings if for any 
separable Banach space $Y$ and any contractive map $u: Y \to M$
there is a  contractive
(resp. bounded)  map  $\hat u: Y \to W$ that lifts $u$, meaning that $q\hat u=u$.
As discussed already in Remark \ref{pb}
 there are no known examples
 of   surjective $*$-homomorphism $q: W \to M$ that fail this.
 This motivates the next statement.
 
 \begin{pro}\label{23/10'} Let $M$ be a QWEP von Neumann algebra.
 Let $W$ be a WEP $C^*$-algebra and let  $q:W \to M$ be a surjective $*$-homomorphism. 
 If $q$ admits contractive liftings
 then $M$ is 
 remotely injective.
 \end{pro}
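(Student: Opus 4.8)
The plan is to reduce the statement, via Proposition \ref{p23/10}, to the construction of a factorization $Id_M=vu$ in which $u\colon M\to B(K)$ is merely a (Banach–space) contraction and $v\colon B(K)\to M$ is completely contractive. Indeed, since $\max(M)$ turns \emph{any} contraction out of $M$ into a complete contraction, such a pair is exactly a completely contractive factorization of $\Phi\colon\max(M)\to M$ through $B(K)$; and $vu=Id_M$ together with $\|v\|_{cb}\le1$ automatically forces $u$ to be isometric, so the hypotheses of remote injectivity are met. Moreover the map $v$ that the construction produces will be unital and completely positive, so its complete contractivity is free.

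First I would record the WEP factorization of $q$, exactly as in Lemma \ref{23/10}: since $W$ is WEP, the inclusion $i_W\colon W\to W^{**}$ factors through some $B(H)$ as $W\xrightarrow{w_1}B(H)\xrightarrow{b}W^{**}$ with $w_1$ a unital (isometric) $*$-homomorphism and $b$ u.c.p., whence $w_2:=\ddot q\,b\colon B(H)\to M$ is u.c.p. and $q=w_2w_1$. This $w_2$ is the natural candidate for $v$, being already completely positive and landing in $M$. Next I would use the lifting hypothesis to invert $w_2$ along the identity. For each finite–dimensional (hence separable) subspace $E\subset M$, the inclusion $\iota_E\colon E\hookrightarrow M$ is a contraction from a separable Banach space, so by assumption it lifts to a contraction $\hat\iota_E\colon E\to W$ with $q\hat\iota_E=\iota_E$; setting $\alpha_E:=w_1\hat\iota_E\colon E\to B(H)$ gives a contraction with $w_2\alpha_E=q\hat\iota_E=\iota_E$, i.e.\ an \emph{exact} partial section of $w_2$ over $E$. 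Fixing an ultrafilter $\cl U$ on the directed set of these subspaces refining the order, one assembles the $\alpha_E$ coordinatewise into a linear contraction $u$ and the coordinatewise $w_2$ (followed by a weak* $\cl U$-limit in $M$) into a u.c.p.\ map $v$, so that $vu=Id_M$ holds because $w_2\alpha_E=\iota_E$ is an equality and $E\ni x$ eventually along $\cl U$. Crucially, $v$ is \emph{defined directly} on the whole home algebra, so no extension of a map into the non‑injective $M$ is needed.

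The hard part will be to house this factorization inside an \emph{injective} $B(K)$, as Proposition \ref{p23/10} demands. Here lies a genuine tension absent from the previous proposition. If one passes to a single weak* limit $u(x)=\lim_{\cl U}\alpha_E(x)\in B(H)$, then $u$ lands in the injective algebra $B(H)$ but $w_2u\neq Id_M$, precisely because the weak expectation $b$, and hence $w_2$, is \emph{not normal} and does not commute with the weak* limit. Keeping the coordinatewise data restores $vu=Id_M$ but places $u$ in the ultraproduct $\prod_{\cl U}B(H)$, which need not be injective; while assembling $u$ into the genuinely injective full product $\prod_E B(H)$ would require \emph{contractive linear extensions} of the $\alpha_E$ from $E$ to all of $M$, and these need not exist, since $B(H)$ is not $1$-injective as a Banach space. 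This is exactly the gap between the present hypothesis and the OLLP hypothesis of the preceding proposition: there the local lifts were \emph{completely} contractive for the operator space structure induced on $E\subset\max(M)$, which let one extend them over $B(\cl H)$ by the injectivity of $B(H)$, whereas a merely Banach–contractive lift carries no such extendability. I therefore expect the decisive step to be a careful choice of the home algebra, reconciling ``$u$ contractive'', ``$vu=Id_M$'', and ``injective home''—most plausibly by reducing to the $\sigma$-finite (separable predual) case and exploiting normality there, or by showing that an appropriate ultraproduct of copies of $B(H)$ may legitimately replace $B(K)$ in Proposition \ref{p23/10}.
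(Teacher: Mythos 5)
Your reduction via Proposition \ref{p23/10} is sound (a contraction $u\colon M\to B(K)$ is automatically completely contractive on $\max(M)$, and $vu=Id_M$ with $\|v\|_{cb}\le 1$ forces $u$ isometric), your use of the WEP factorization $q=w_2w_1$ agrees with Lemma \ref{23/10}, and your diagnosis of the obstruction is accurate: the partial sections $\alpha_E\colon E\to B(H)$ cannot be assembled into a map landing in a single injective algebra, because a merely Banach-contractive lift over $E$ carries no completely bounded extendability, while the ultraproduct $\prod_{\cl U}B(H)$ is not an admissible home (it is not injective; by Proposition \ref{cou1} such ultraproducts are not even seemingly injective, so your second suggested fix cannot work). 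But you stop exactly where the paper's proof begins: the proposal ends with a conjecture about what the decisive step might be, and neither suggested route is developed, so there is a genuine gap.

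The missing idea is Lemma \ref{os1} (Ozawa): every finite dimensional $E\subset\max(M)$ is contained in a \emph{separable maximal} operator subspace $Y\subset\max(M)$. One lifts the inclusion $Y\hookrightarrow M$ --- legitimate, since the hypothesis covers arbitrary separable Banach spaces, not just finite dimensional ones --- to a contraction $\hat u\colon Y\to W$; maximality of $Y$ then upgrades this for free to $\|\hat u\|_{cb}=1$. This restores exactly the extendability you found lacking: $w_1\hat u\colon Y\to B(H)$ is completely contractive, hence extends by injectivity of $B(H)$ to a complete contraction $w_3\colon B(\cl H)\to B(H)$, where $U\colon M\to B(\cl H)$ is the normal completely isometric realization of $\max(M)$ as a weak* closed subspace. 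Note the reversal of direction this entails: instead of assembling partial sections $E\to B(H)$ of $w_2$, one assembles the \emph{globally defined} complete contractions $w^E=w_2w_3\colon B(\cl H)\to M$, each fixing $E$ pointwise; since they all live on the fixed algebra $B(\cl H)$, their pointwise weak* limit along an ultrafilter refining the net of $E$'s is again a map $V\colon B(\cl H)\to M$ with $\|V\|_{cb}\le 1$ and $VU=Id_M$. This avoids the ultraproduct entirely and gives remote injectivity directly, with the normal isometric $U$ serving as the map $u$ of Definition \ref{d1}.
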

 
  \begin{proof} 
   Let  $U: M \to B(\cl H)$  be as in the preceding proof.
  We again identify  $\max(M)  $ with $U( M )$. 
 By Lemma \ref{os1}, for any f.d. $E\subset \max(M)$ there 
 is a separable $Y\subset  \max(M)$
 such that $E\subset Y$
 that is a maximal o.s. i.e.  we have
 $\max(Y)=Y$. 
 Let $u: Y \to  M$ be the inclusion map of $Y$ into $M$. By our assumption,
  $u$ admits a lifting
 $\hat u: Y  \to W$ with $\|\hat u\|\le 1$.
 Then $\|\hat u\|_{cb}=1$ by maximality of $Y$.   By the injectivity
 of $B(H)$, the map $w_1\hat u$ admits an extension
 $w_3: B(\cl H)\to B(H)$ with $\| w_3 \|_{cb} =1$. Let $w^E= w_2w_3: B(\cl H) \to M$.
 Note $w^E(x)=u(x)=x$ for any $x\in Y$, and a fortiori for any $x\in E$.
At this point we can conclude as in the preceding proof.
     \end{proof}
     
   $$\xymatrix{&B(\cl H) \ar@/^/[drr]^{w_3} &&  &\\
  &\max(M)\ar[u]^{U} &W\ar[d]^{q}\ar[r]^{  w_1 \ \   }&  B(H) \ar[dl]^{w_2} &
    \\
&Y\ar[ur]^{\hat u} \ar[r]^{u}\ar[u] & M  
   &   }$$
     
The same argument shows that if $q$ admits bounded liftings
then the isometric embedding $M \subset B(\cl H)$ is complemented,
i.e. there is a bounded projection $P: B(\cl H)\to M$.

 \begin{rem} We failed  to prove an analogue of Proposition \ref{23/10'}
 for positive contractive liftings to conclude that $M$ is  seemingly injective. We suspect the description
 in  \cite{PTT} of the analogue of $\max(X)$ when $X$ is an operator system should be
 useful, but the analogue of the \emph{normal} embedding $U$ seems to be missing.
 \end{rem}
 \begin{rem}\label{rrr} Let $M$ be a
  QWEP $C^*$-algebra. Then there is a normal
  embedding $M^{**}\subset B(H)^{**} $  with a unital completely positive 
  projection from $ B(H)^{**} $ onto $M^{**}$ (see \cite[p. 207]{P6}), and
  the projection can be chosen normal by \cite[p. 148]{P6}. 
  If $M$ is a von Neumann algebra there is a normal (non-unital) embedding
  $M\subset M^{**}$ (this is \emph{not} the canonical one)
  with a unital completely positive normal
  projection from $ M^{**} $ onto $M $.
  Therefore we have a factorization 
   $$
      Id_M: M {\buildrel U\over\longrightarrow} B(H)^{**} {\buildrel V\over\longrightarrow} M$$
  where $U,V$ are normal unital c.p. maps. Thus, if $B(H)^{**} $ was
  remotely  (resp. seemingly) injective, the same would be  true for  any QWEP $M$.
 \end{rem}
 
 The next lemma is a well known duality argument (see e.g. \cite[p. 440]{P6}).
 We denote there by $S_1^n$ the o.s. dual of $M_n$ that is determined
 by the isometric identity $CB(S_1^n, E)= M_n(E)$ for any o.s. $E$ (see \cite{ER,P4,Pa2}).
 \begin{lem} [Hahn-Banach argument]\label{hb}
Let $X\subset B$ be a completely isometric inclusion   of o.s.
Let $T: X \to C^*$ be a map into the dual of another o.s. $C$.
The following are equivalent:\\
(i) There is an extension $\tilde T: B \to C^*$ 
with $\| \tilde T \|_{cb }\le \lambda$.\\
(ii) For any $n$, any $S\subset S_1^n$  and any $u: S \to X$
that admits an extension $\hat u: S_1^n \to B$ 
with $\|\hat u\|_{cb}=1$ there is a map $\tilde u: S_1^n \to C^*$
such that ${\tilde u}_{|S} =Tu$
with $  \|\tilde u\|_{cb}\le \lambda$.
$$\xymatrix{S_1^n\ar[r]^{ \hat u }\ar@/^3pc/[drr]^{\tilde u}&B\ar@{-->}[dr]^{ \tilde T } \\
 S\ar@{^{(}->}[u]\ar[r]^{u} & X \ar@{^{(}->}[u] 
 \ar[r]^{ T \ } & C^*}$$
Note that if $B$ is injective (e.g. when $B=B(H)$) then (ii)
is the same as
\\
(ii)' For any $n$, any $S\subset S_1^n$  and any $u: S \to X$
with $\|  u\|_{cb}=1$ there is a map $\tilde u: S_1^n \to C^*$
such that ${\tilde u}_{|S} =Tu$
with $  \|\tilde u\|_{cb}\le \lambda$.
\end{lem}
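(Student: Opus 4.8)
The plan is to read the statement as a completely bounded Hahn--Banach extension theorem and to reduce it, via operator space duality, to an ordinary Hahn--Banach extension of a single linear functional (this is why $C$ enters only through its dual $C^*$). The implication (i) $\Rightarrow$ (ii) is immediate by composition: given $\tilde T$ extending $T$ with $\|\tilde T\|_{cb}\le\lambda$, and data $(n,S,u,\hat u)$ as in (ii), set $\tilde u:=\tilde T\hat u: S_1^n\to C^*$. Then $\|\tilde u\|_{cb}\le\|\tilde T\|_{cb}\|\hat u\|_{cb}\le\lambda$, and for $\xi\in S$ we have $\tilde u(\xi)=\tilde T(\hat u(\xi))=\tilde T(u(\xi))=T(u(\xi))$ since $u(\xi)\in X$ and $\tilde T_{|X}=T$; thus $\tilde u_{|S}=Tu$. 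The remark about (ii)' is handled the same way: when $B=B(H)$ is injective, any $u:S\to X$ with $\|u\|_{cb}=1$ automatically extends, by Arveson--Wittstock, to $\hat u:S_1^n\to B$ with $\|\hat u\|_{cb}=1$, so the existence of $\hat u$ ceases to be a hypothesis.

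For the substantial direction (ii) $\Rightarrow$ (i) I would use the fundamental duality $CB(B,C^*)=(B\widehat{\otimes} C)^*$ (operator space projective tensor product), under which $T:X\to C^*$ corresponds to a functional $f_T$ on $X\widehat{\otimes} C$ with $\|f_T\|=\|T\|_{cb}$. The completely isometric inclusion $X\subset B$ induces a complete contraction $\iota:X\widehat{\otimes} C\to B\widehat{\otimes} C$, and the existence of an extension $\tilde T:B\to C^*$ with $\|\tilde T\|_{cb}\le\lambda$ is exactly the existence of a functional $\tilde f$ on $B\widehat{\otimes} C$ with $\|\tilde f\|\le\lambda$ and $\tilde f\circ\iota=f_T$. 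By the elementary factorization form of the scalar Hahn--Banach theorem, such an $\tilde f$ exists if and only if
\[
|f_T(\zeta)|\le\lambda\,\|\iota(\zeta)\|_{B\widehat{\otimes} C}\qquad\text{for all }\zeta\in X\otimes C .
\]
It then remains to deduce this numerical inequality from (ii). To do so I would fix $\zeta=\sum_k x_k\otimes c_k\in X\otimes C$ and choose an almost optimal factorization realizing $\|\iota(\zeta)\|_{B\widehat{\otimes} C}$. Reading such a factorization through the isometric identifications $CB(S_1^n,E)=M_n(E)$ and $M_n(C^*)=(S_1^n\widehat{\otimes} C)^*$, it produces an integer $n$, a map $\hat u:S_1^n\to B$ whose cb-norm matches the factorization, and---by isolating the part of $\zeta$ lying in $X$, on which $T$ acts---a subspace $S\subset S_1^n$ together with $u=\hat u_{|S}:S\to X$. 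Condition (ii) then furnishes $\tilde u:S_1^n\to C^*$ with $\tilde u_{|S}=Tu$ and $\|\tilde u\|_{cb}\le\lambda$; pairing $\tilde u$ (viewed in $M_n(C^*)=(S_1^n\widehat{\otimes} C)^*$) against the chosen factorization recovers $f_T(\zeta)$ on the left and is bounded by $\lambda\|\iota(\zeta)\|_{B\widehat{\otimes} C}$ on the right, which is the wanted inequality.

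The main obstacle is precisely this last translation: setting up an exact dictionary, in both directions, between finite tensors $\zeta\in X\otimes C$ equipped with a near-optimal projective-norm factorization of $\iota(\zeta)$ and the quadruples $(n,S,u,\hat u)$ appearing in (ii), so that the constant $\lambda$ and all the cb-norms match on the nose rather than up to harmless constants. The delicate point---and, I believe, the very reason (ii) is phrased via a subspace $S\subset S_1^n$ carrying a \emph{separate} $B$-valued extension $\hat u$, instead of a single cb map on all of $S_1^n$---is to keep the $X$-valued part of the data (where $T$ is applied) disjoint from the purely $B$-valued part that is used only to compute the projective norm. As a sanity check, the case $C=\mathbb{C}$ collapses everything to the classical Hahn--Banach theorem: (i) becomes ``$T$ extends to a norm-$\le\lambda$ functional on $B$'', (ii) becomes ``$\|Tu\|\le\lambda$ for every contraction $u:S\to X$ that extends to $S_1^n\to B$'', and the stated equivalence is exactly the usual one, which gives confidence that the cb-level bookkeeping can be pushed through.
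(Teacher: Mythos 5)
Your strategy is the right one, and in fact it is the intended one: the paper offers no proof of this lemma at all, calling it ``a well known duality argument'' and citing \cite[p.~440]{P6}, and that argument is precisely the combination you set up, namely the duality $CB(B,C^*)=(B\widehat{\otimes}C)^*$ (operator space projective tensor product) plus the scalar Hahn--Banach theorem. Your (i) $\Rightarrow$ (ii) by composition, your handling of (ii)$'$ via injectivity of $B(H)$, and your reduction of (ii) $\Rightarrow$ (i) to the numerical inequality $|f_T(\zeta)|\le\lambda\,\|\iota(\zeta)\|_{B\widehat{\otimes}C}$ for $\zeta\in X\otimes C$ are all correct. But the proposal stops exactly where the mathematical content of (ii) $\Rightarrow$ (i) lies: you never construct the quadruple $(n,S,u,\hat u)$ from a near-optimal factorization, never say what $S$ is, and never verify the pairing identity $\langle \tilde u,w\rangle=f_T(\zeta)$. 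Saying one should ``isolate the part of $\zeta$ lying in $X$'' names the problem rather than solving it; as written, only the trivial direction has actually been proven.

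Here is the missing step, to make concrete what it requires. If $\|\zeta\|_{B\widehat{\otimes}C}<1$, write $\zeta=\alpha(b\otimes c)\beta$ with $b\in M_n(B)$, $c\in M_m(C)$, $\|b\|_{M_n(B)}=1$ and $\|\alpha\|\,\|c\|\,\|\beta\|<1$ (Effros--Ruan description of the projective norm, after rescaling $b$). Let $\hat u\in CB(S_1^n,B)$ correspond to $b$ under $CB(S_1^n,E)=M_n(E)$, so $\|\hat u\|_{cb}=1$, and let $w=\alpha(\delta\otimes c)\beta\in S_1^n\otimes C$, where $\delta\in M_n(S_1^n)$ corresponds to $Id_{S_1^n}$; then $\|w\|_{S_1^n\widehat{\otimes}C}<1$ and $(\hat u\otimes Id_C)(w)=\zeta$. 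Now write $w=\sum_k s_k\otimes c_k$ with the $c_k$ \emph{linearly independent}. Applying slice maps $Id\otimes\psi$, with $\psi\in C^*$ dual to the $c_k$'s, to the identity $\sum_k \hat u(s_k)\otimes c_k=\zeta\in X\otimes C$ shows that $\hat u(s_k)\in X$ for every $k$. Hence, setting $S:=\hat u^{-1}(X)$ and $u:={\hat u}_{|S}:S\to X$, we get $w\in S\otimes C$, and $(n,S,u,\hat u)$ is an admissible datum for (ii). The map $\tilde u$ furnished by (ii), viewed in $M_n(C^*)=(S_1^n\widehat{\otimes}C)^*$, then satisfies $\langle\tilde u,w\rangle=\sum_k\langle\tilde u(s_k),c_k\rangle=\sum_k\langle Tu(s_k),c_k\rangle=f_T(\zeta)$, whence $|f_T(\zeta)|\le\lambda\|w\|_{S_1^n\widehat{\otimes}C}\le\lambda$; letting the factorization vary gives the inequality, and Hahn--Banach finishes. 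The linear-independence slice argument is the hinge you were missing: it is what guarantees that $w$ lives in $S\otimes C$, i.e.\ that the $X$-valued and purely $B$-valued parts of the data can indeed be separated, and without it the pairing identity (which uses only ${\tilde u}_{|S}=Tu$) does not follow.
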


\begin{thm} \label{t2}
Let $A$ be any unital $C^*$-algebra. 
If $\max(A)$ is locally reflexive (with constant 1) and $A$ WEP
then $A^{**}$ is remotely injective.
\end{thm}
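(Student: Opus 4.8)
The plan is to reduce, via Proposition \ref{p23/10}, to producing a completely contractive factorization of the canonical map $\Phi\colon \max(A^{**})\to A^{**}$ through some $B(H)$. Following the proof of that proposition, I fix a normal completely isometric embedding $U\colon \max(A^{**})\to B(\cl H)$ realizing $\max(A^{**})$ as a weak*-closed subspace $X=U(\max(A^{**}))$, and set $T=\Phi U^{-1}\colon X\to A^{**}$. Since $A^{**}=(A^*)^*$ is a dual operator space and $B(\cl H)$ is injective, it suffices to extend $T$ to a map $\tilde T\colon B(\cl H)\to A^{**}$ with $\|\tilde T\|_{cb}\le 1$; then $V=\tilde T$ satisfies $VU=\Phi$, giving the required factorization. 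I will obtain this extension from condition (ii)' of Lemma \ref{hb}, applied with $B=B(\cl H)$, $C=A^*$ and $\lambda=1$. The key structural input is the completely isometric identification $\max(A^{**})=(\min(A^*))^*=\max(A)^{**}$ coming from the min/max duality of \cite{[BP1]}; note that the weak* topology of $\max(A)^{**}$ agrees on the underlying space with $\sigma(A^{**},A^*)$.

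To verify (ii)', let $S\subset S_1^n$ and let $u\colon S\to X$ be a complete contraction. Put $w=U^{-1}u\colon S\to \max(A^{**})=\max(A)^{**}$, a complete contraction, so that $Tu=\Phi w$ is, on the underlying space, just $w$ viewed into $A^{**}$. Thus I must extend $\Phi w$ to a complete contraction $S_1^n\to A^{**}$. The finite-dimensional subspace $E=w(S)\subset \max(A)^{**}$ is where local reflexivity enters: since $\max(A)$ is $1$-locally reflexive, there is a net $\sigma_\lambda\colon E\to \max(A)$ with $\|\sigma_\lambda\|_{cb}\le 1$ tending to the inclusion $E\hookrightarrow \max(A)^{**}$ pointwise weak*. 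Hence $\sigma_\lambda w\colon S\to \max(A)$ are complete contractions with $\sigma_\lambda w\to w$ pointwise weak* in $A^{**}$.

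Now WEP is used to extend maps that land in $A$. I write the inclusion $i_A\colon A\to A^{**}$ as $i_A=w_2w_1$ with $w_1\colon A\to B(K)$ and $w_2\colon B(K)\to A^{**}$ completely positive and completely contractive. Composing with the canonical complete contraction $\Phi_A\colon \max(A)\to A$, the maps $w_1\Phi_A\sigma_\lambda w\colon S\to B(K)$ are complete contractions; by injectivity of $B(K)$ each extends to $g_\lambda\colon S_1^n\to B(K)$ with $\|g_\lambda\|_{cb}\le 1$. I set $\tilde u_\lambda=w_2 g_\lambda\colon S_1^n\to A^{**}$, so $\|\tilde u_\lambda\|_{cb}\le 1$ and, on $S$, $\tilde u_\lambda|_S=w_2w_1\Phi_A\sigma_\lambda w=i_A\Phi_A\sigma_\lambda w$, which tends pointwise weak* to $w$ viewed in $A^{**}$, i.e. to $Tu$. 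Taking a weak* cluster point $\tilde u$ of $(\tilde u_\lambda)$ along an ultrafilter yields $\tilde u\colon S_1^n\to A^{**}$ with $\|\tilde u\|_{cb}\le 1$ and $\tilde u|_S=Tu$ (as $S_1^n$ is finite dimensional, the cb-norm is weak* lower semicontinuous). This is exactly (ii)', so Lemma \ref{hb} provides $\tilde T$, and Proposition \ref{p23/10} concludes that $A^{**}$ is remotely injective.

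The main obstacle I expect is organizing the two independent limiting procedures so that they interlock cleanly: the local-reflexivity net $(\sigma_\lambda)$ must be absorbed \emph{before} the injective extension and the final ultrafilter limit, and one has to be sure that the identification $\max(A^{**})=\max(A)^{**}$ transports the weak* topology used in local reflexivity onto the $\sigma(A^{**},A^*)$-topology in which the final limit is taken. Everything else---the completely positive factorization coming from WEP, the injective extension off $S\subset S_1^n$, and the dual-space hypotheses of Lemma \ref{hb}---is routine once this bookkeeping is in place.
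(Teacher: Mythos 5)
Your proposal is correct and follows essentially the same route as the paper's proof: both reduce via Proposition \ref{p23/10} and Lemma \ref{hb} (ii)' to extending $\Phi u$ for complete contractions $u$ defined on subspaces $S\subset S_1^n$, use the identification $\max(A)^{**}=\max(A^{**})$ together with $1$-local reflexivity to approximate by complete contractions into $\max(A)$, invoke the WEP to extend these to all of $S_1^n$, and conclude with an ultrafilter limit in the $\sigma(A^{**},A^*)$-sense. The only (harmless) difference is that you spell out the WEP step via the factorization $i_A=w_2w_1$ through $B(K)$ and the injectivity of $B(K)$, so that your extensions land in $A^{**}$, whereas the paper invokes the corresponding extension property of WEP algebras directly.
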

\begin{proof} 
Let $U: \max(A^{**}) \to B(\cl H)$ be a completely isometric embedding as a weak*-closed subspace.
Let $\Phi: \max(A^{**})  \to  A^{**}$ be as before the complete contraction
defined by the identity on $A^{**}$. 
We claim that if $\max(A)$ is $\lambda$-locally reflexive
we have a factorization of   
$\Phi: \max(A^{**}) \to A^{**}$ of the form $\Phi=v U$ for some 
completely contractive $v:  B(\cl H) \to A^{**}$. To check that we use Lemma \ref{hb} and the local reflexivity. By (ii)' in Lemma \ref{hb}
it suffices  
to prove that for any $S\subset S_1^n $ and any $u: S \to \max(A^{**})$
with $\|u\|_{cb}=1$ there is a map $\tilde u: S_1^n \to A^{**}$
with $  \|\tilde u\|_{cb}\le \lambda$ that extends $u$ in the sense that $\tilde u_{|S}=\Phi u$.
Now since $\max(A)^{**}= \max(A^{**})$
the LR assumption on $\max(A)$ implies
that there is a net of c.c. maps $u_i: S \to \max(A)$ tending pointwise
weak* to $u$.
Now $\Phi u_i: S \to A$ is a complete contraction, and  since
$A$ has the WEP we have c.c. maps $v_i: S_1^n \to A$
extending $\Phi  u_i$. We now use an ultrafilter $\cl U$ and
define $\tilde u(x)=\lim_{\cl U} v_i(x) \in A^{**}$ for any $x\in S_1^n$, the limit being
in the $\sigma(A^{**},A^*)$-sense. Now $\tilde u: S_1^n \to A^{**}$
is our extension since for any $s\in S$ we have
$  \tilde u(s)=
\lim_{\cl U} v_i(s)=\lim_{\cl U} \Phi u_i(s)=\Phi u(s)$ for limits  in the same sense.
Thus $\tilde u_{|S}=\Phi u$.
\end{proof}

\begin{cor} 
If $\max(B(H))$ is 1-locally reflexive   then  $ B(H)^{**}$ is remotely injective.
\end{cor}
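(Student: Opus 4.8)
The plan is to obtain this as the immediate specialization of Theorem~\ref{t2} to the case $A=B(H)$. That theorem has exactly two hypotheses on $A$: that $\max(A)$ be $1$-locally reflexive, and that $A$ have the WEP. The first is precisely the standing assumption of the corollary, so the only point I would need to check is that $B(H)$ has the WEP.

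This last point is essentially trivial, since $B(H)$ is the fundamental injective example. Indeed, writing $i: B(H)\to B(H)^{**}$ for the canonical inclusion, the factorization
\[
i: B(H) {\buildrel \mathrm{Id}\over\longrightarrow} B(H) {\buildrel i\over\longrightarrow} B(H)^{**}
\]
already realizes $i$ through $B(H)$ itself by unital completely positive (hence completely contractive) maps, which is exactly the definition of the WEP. (More generally, any injective $C^*$-algebra has the WEP: one factors $i$ through $B(K)$, for a faithful representation $B(H)\subset B(K)$, using the completely positive contractive conditional expectation onto $B(H)$ furnished by Tomiyama's theorem.)

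With both hypotheses in hand, Theorem~\ref{t2} applies verbatim and yields that $A^{**}=B(H)^{**}$ is remotely injective, which is the assertion. I do not anticipate any genuine obstacle here: the corollary is a pure specialization, and all of the real content---the Hahn--Banach factorization of Lemma~\ref{hb} fed by the $1$-local reflexivity of $\max(A)$---has already been expended in the proof of Theorem~\ref{t2}.
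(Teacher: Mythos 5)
Your proposal is correct and takes essentially the same route as the paper, whose entire proof reads ``Indeed, $B(H)$ is injective and hence has the WEP,'' i.e., it too simply specializes Theorem~\ref{t2} to $A=B(H)$ after observing the WEP. Your explicit verification of the WEP via the trivial factorization of the canonical inclusion $B(H)\to B(H)^{**}$ through $B(H)$ itself by u.c.p.\ maps is just a slightly more detailed rendering of that same one-line step.
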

\begin{proof} 
Indeed, $B(H)$ is injective and hence has the WEP.
\end{proof}
 
\section{Some questions}\label{sq}
 
By a well known result (see \cite{[CE4], [CE4]})
a $C^*$-algebra is nuclear if and only if   $A^{**}$
is injective. One could name \emph{``seemingly nuclear"}
the $C^*$-algebras $A$ for which $A^{**}$ is seemingly injective.
Unfortunately at this stage we do not have any valuable information
on this notion.

Similarly, a discrete group $G$ could be called \emph{``seemingly amenable"}
if $M=L(G)$ is ``seemingly injective".  By Remark \ref{caha}  
this would lead to the shocking assertion that
   free groups
are seemingly amenable !

 At the moment we do not have any example of a group that is not 
 seemingly amenable but we suspect that
 such examples exist, for instance among   property (T)  groups
 (for which we refer the reader to \cite{BHV}).
 
 As for von Neumann algebras, it would be nice to exhibit more
 examples of $M$'s  that are QWEP but  not seemingly injective,
 without relying on Szankowski's construction \cite{Sz}.

Another puzzling question is whether it is really essential that the map
$u$ in Definition \ref{d1} should be \emph{normal}. It seems  essential for the proof
of Theorem \ref{ot1}. So  we ask: does any QWEP von Neumann algebra $M$
admit a factorization as in \eqref{e1} with $u$ isometric and $\|v\|_{cb}=1$ ?
 Is any non-nuclear QWEP von Neumann algebra $M$ isomorphic as a Banach space to
 $B(H)$ for some $H$ ?

Unfortunately, we  could not decide the following (which were our original motivation):\\
  {\bf Conjectures}: 
  \\(i) Any remotely injective von Neumann algebra $M$ has the (Banach space sense) weak* MAP  (equivalently $M_*$ has the MAP).
  \\(ii) $B(H)^{**}$ (say for $H=\ell_2$) is not remotely injective.
\\(ii)' There is a QWEP von Neumann algebra that  is not remotely injective.
 \\(iii) There exists
  an ideal in a separable
 $C^*$-algebra 
 with no contractive lifting,
 and hence with lifting constant $>1$.\\
 Note that by the same argument as for Proposition \ref{cou1} (ii) implies
 that $R^\omega$ or $\prod M_n/\omega$ are not remotely injective.
 We feel that (ii) and (iii) should be true,    but  are less convinced about (i), due to multiple unsuccessful attempts (our initial hope was to prove that remotely injective
 implies seemingly injective).
 
 We will show (i) $\Rightarrow$ (ii) $\Leftrightarrow$ (ii)' $\Rightarrow$ (iii).
 Obviously (i) implies (ii) by Szankowski's results on the failure of the AP
 for $B(H)$ or a fortiori for any non-nuclear von Neumann algebra. Indeed,
 his results imply that $B(H)^*$ fails the MAP, and hence that $B(H)^{**}$
 fails the  weak* MAP. (ii) $\Rightarrow$ (ii)' is trivial since
 $B(H)^{**}$ is QWEP (see \cite[p. 205]{P6}). In fact  (ii) and (ii)' are actually equivalent
 by Remark \ref{rrr}.\\
 The fact that (ii) implies (iii) follows from Proposition \ref{23/10'}.
   
 This argument shows that if (iii) fails then 
 any QWEP $M$ is remotely injective,
 or equivalently (ii) fails.
 
 In conclusion, if there exists a QWEP von Neumann algebra $M$ that is
 not remotely injective (which boils down to $M=B(H)^{**}$),
 then there exists
  an ideal in a separable
 $C^*$-algebra 
 with no contractive lifting, and 
 $\max(B(H))$ is not locally reflexive (with  constant 1). This would
 answer (at least  for the constant 1 case) some of the questions raised in \cite{Ozth} and \cite{Ozllp},
 and also in \cite{[O3]}.
 
 Lastly, we should mention that according to
 the recent  very long preprint \cite{JNVWY}
 there are von Neumann algebras that are not QWEP.
 
 \medskip
  
\medskip
 
  \medskip
   
 \noindent {\bf Acknowledgement: }Thanks are due to the referees for useful comments
  and corrections.


 \vfill\eject
 {\bf Corresponding author:}
 
 Gilles Pisier
 
 Mathematics Dept.
 
 Texas A\&M University
 
 College Station, TX 77843-3368, USA
 
 gilles.pisier@imj-prg.fr

 \bigskip
 {\bf Short statement on the research background and significance of the work:}
 
 This paper is devoted to a generalization of injectivity for von Neumann algebras. The surprising feature is that the
 generalized notion is satisfied by the  von Neumann algebra of a non commutative free
 group which is the main fundamental example
 of a non-injective von Neumann algebra. The background involves
 von Neumann algebra theory as well as 
 various notions of approximation properties of the identity by finite rank maps.
 
   \end{document}